\theoremstyle{thmstyletwo}%
\newtheorem{theorem}{Theorem}[section]
\newtheorem{lemma}[theorem]{Lemma}
\newtheorem{remark}[theorem]{Remark}%
\numberwithin{equation}{section}
\numberwithin{figure}{section}
\numberwithin{table}{section}
\DeclareMathAlphabet{\mathcc}{LS1}{stixcal}{m}{n}
\newcommand{\C}{\mathcal{C}}
\newcommand{\bR}{\mathbf{R}}
\newcommand{\bP}{\mathbf{P}}
\newcommand{\bI}{\mathbf{I}}
\newcommand{\fb}{\boldsymbol{f}}
\newcommand{\bH}{\mathbf{H}}
\newcommand{\bn}{\mbox{\boldmath{$n$}}}
\newcommand{\bW}{\mathbf{W}}
\newcommand{\bs}{\boldsymbol{s}}
\newcommand{\bu}{{\boldsymbol{u}}}
\newcommand{\bS}{\mathbf{S}}
\newcommand{\bT}{\mathbf{T}}
\newcommand{\bv}{\mathbf{v}}
\newcommand{\bV}{\mathbf{V}}
\newcommand{\bL}{\mathbf{L}}
\newcommand{\bw}{\boldsymbol{w}}
\newcommand{\bx}{\boldsymbol{x}}
\newcommand{\bz}{\mbox{\boldmath{$z$}}}
\newcommand{\bepsilon}{\mbox{\boldmath{$\varepsilon$}}}
\newcommand{\brho}{\boldsymbol{\rho}}
\newcommand{\bkappa}{\revX{\boldsymbol{\kappa}}}
\newcommand{\bsigma}{\boldsymbol{\sigma}}
\newcommand{\bPi}{\mbox{\boldmath{$\Pi$}}}
\newcommand{\btau}{\boldsymbol{\tau}}
\newcommand{\bxi}{\boldsymbol{\xi}}
\newcommand{\bbL}{\mathbb{L}}
\newcommand{\bbH}{\mathbb{H}}
\newcommand{\bbLsym}{\mathbb{L}_{\mathrm{sym}}^2(\Omega)}
\newcommand{\bbLskew}{\mathbb{L}_{\mathrm{skew}}^2(\Omega)}
\newcommand{\bbHsym}{\mathbb{H}_{\mathrm{sym}}(\bdiv;\Omega)}
\newcommand{\bzero}{\boldsymbol{0}}
\newcommand{\BBR}{\mbox{$\mathbb{R}$}}
\newfont{\twelvemsb}{msbm10 at 11.6pt}
\renewcommand{\div}{\mathop{\rm div}\nolimits}
\newcommand{\bdiv}{\mathop{\mathbf{div}\,}\,}
\renewcommand{\max}{\mathop{\rm max}\limits}
\newcommand{\tr}{\mathop{\rm tr}}
\newcommand{\Om}{\Omega}
\newcommand\bbR{\mathbb{R}}
\def\bdiv{\mathbf{div}}
\def\qan{{\quad\hbox{and}\quad}}
\def\qin{{\quad\hbox{in}\quad}}
\newcommand{\ds}{\displaystyle}
\def\qon{{\quad\hbox{on}\quad}}
\newcommand{\bbeta}{{\boldsymbol\eta}}
\def\rskew{\mathrm{skew}}
\newcommand{\bchi}{{\boldsymbol{\chi}}}
\newcommand{\rL}{\mathrm{L}}
\newcommand{\rW}{\mathrm{W}}
\newcommand{\rH}{\mathrm{H}}
\newcommand{\bzeta}{{\boldsymbol\zeta}}
\def\rt{\mathrm{t}}
\def\rd{\mathrm{d}}
\def\wh{\widehat}
\def\dist{\mathrm{dist}\,}
\newcommand{\bbV}{\mathbb{V}}
\newcommand{\bvarphi}{{\boldsymbol{\varphi}}}
\newcommand{\bpsi}{{\boldsymbol{\psi}}}
\def\bcurl{\mathbf{curl}\,}
\newcommand{\rot}{\mathrm{rot}\,}
\newcommand{\cR}{\mathcal{R}}
\newcommand{\curl}{\mathrm{curl}\,}
\newcommand{\cT}{\mathcal{T}}
\newcommand{\bq}{\mbox{\boldmath{$q$}}}
\def\rD{\mathrm{D}}
\def\rN{\mathrm{N}}
\def\wt{\widetilde}
\newcommand\bnabla{\boldsymbol{\nabla}}
\definecolor{lightgreen}{rgb}{0.22,0.50,0.25}
\definecolor{lightblue}{rgb}{0.22,0.45,0.70}
\providecommand{\revX}{}
\providecommand{\revZ}{}
\providecommand{\revY}{}
\newcommand{\cblue}{}
\begin{document}

\DOI{xxx}
\copyrightyear{2024}
\vol{00}
\pubyear{2024}
\access{Advance Access Publication Date: \today}
\appnotes{Paper}
\copyrightstatement{Published by Oxford University Press on behalf of the Institute of Mathematics and its Applications. All rights reserved.}
\firstpage{1}

\title[Mixed FEM for poroelasticity with stress-dependent permeability]{A priori and a posteriori error bounds for the fully mixed FEM formulation of poroelasticity with stress-dependent permeability}

\author{Arbaz Khan\ORCID{0000-0001-6625-700X}
  \address{\orgdiv{Departament of Mathematics}, \orgname{Indian Institute of Technology Roorkee (IITR)},
    \orgaddress{
    \postcode{247667},
    \state{Roorkee}, \country{India}}}}
\author{Bishnu P. Lamichhane\ORCID{0000-0002-9184-8941}
  \address{\orgdiv{School of Information \& Physical Sciences}, \orgname{University of Newcastle}, \orgaddress{\street{University Drive}, \postcode{2308},
      \state{New South Wales}, \country{Australia}}}}
\author{Ricardo Ruiz-Baier*\ORCID{0000-0003-3144-5822} \quad and \quad  
Segundo Villa-Fuentes\ORCID{0000-0002-0377-6555}
\address{\orgdiv{School of Mathematics}, \orgname{Monash University}, \orgaddress{\street{9 Rainforest Walk}, \postcode{3800}, \state{Victoria}, \country{Australia}}}}

\authormark{Khan, Lamichhane, Ruiz-Baier, Villa-Fuentes}

\corresp[*]{Corresponding author: \href{email:ricardo.ruizbaier@monash.edu}{ricardo.ruizbaier@monash.edu}}

\received{19}{02}{2025}


\abstract{We develop a family of mixed finite element methods for a model of nonlinear poroelasticity where, thanks to a rewriting of the constitutive equations, the permeability depends on the total poroelastic stress and on the fluid pressure and therefore we can use the Hellinger--Reissner principle with weakly imposed stress symmetry for Biot's equations. The problem is adequately structured into a coupled system consisting of one saddle-point formulation, one linearised perturbed saddle-point formulation, and two off-diagonal perturbations. This system's unique solvability requires assumptions on regularity and Lipschitz continuity of the inverse permeability, and the analysis follows  fixed-point arguments and the Babu\v{s}ka--Brezzi theory. The discrete problem is shown uniquely solvable by applying similar fixed-point and saddle-point techniques as for the continuous case. The  method is based on the classical PEERS$_k$ elements, it is exactly \revX{equilibrium} and mass conservative, and it is robust with respect to the nearly incompressible as well as vanishing storativity limits. We derive a priori error estimates, we also propose fully computable residual-based a posteriori error indicators, and show that they are reliable and efficient with respect to the natural norms, and robust in the limit of near incompressibility. These a posteriori error estimates are used to drive adaptive mesh refinement. The theoretical analysis is supported and  illustrated by several numerical examples in 2D and 3D. }
\keywords{Mixed finite elements,  
stress-based formulation, nonlinear poroelasticity, fixed-point operators, error estimates.}

\maketitle

\section{Introduction}
Nonlinear interaction between flow and the mechanical response of saturated porous media is of a great importance in many applications in biophysics, geomechanics, and tissue engineering, for example. One of such models is the equations of nonlinear poroelasticity, whose mathematical properties were studied in great detail, for example, in the references \cite{bociu2016analysis,bociu2021nonlinear,bociu2022weak}. In these works, it becomes clear that a distinctive property of nonlinear poroelasticity models targeted for, e.g., soft tissue (cartilage, trabecular meshwork, brain matter, etc.), is that the nonlinear permeability (the hydraulic conductivity,  defined as how easily pore fluid escapes from the compacted pore spaces) that depends on the evolving total amount of fluid, does not entail a monotone operator, and therefore one cannot readily apply typical tools from monotone saddle-point problems. 

Our interest is in deriving mixed finite element (FE) formulations (solving also for other variables of interest), and for this we can cite in particular \cite{gomez23,lamichhane24}, where  fully mixed formulations based on the Hu--Washizu principle are studied. Writing the poroelasticity equations in terms of the strain tensor was motivated in particular in \cite{lamichhane24} because the permeability -- at least in the regime we focus \revX{on} here -- depends nonlinearly on the total amount of fluid, which is a function of strain. 

The upshot here compared to \cite{lamichhane24} is that we are able to rewrite the constitutive equation for permeability to depend on the total poroelastic stress and on the fluid pressure (similarly as in, e.g., \cite{barbeiro2010priori}). This allows us to revert to the more popular Hellinger--Reissner type of mixed formulations for poroelasticity  \cite{baerland17,lee16,li20,borregales2021iterative,yi2014convergence} (without solving explicitly for the strain). Consequently, another appealing advantage with respect to the formulation in \cite{lamichhane24} is that, as in the Hellinger--Reissner formulation, \revX{the model becomes robust   in the nearly incompressibility regime}. Also in contrast to \cite{lamichhane24}, in this work we use a mixed form for the fluid flow (adding the discharge flux as additional unknown), which gives the additional advantage of mass conservativity. 

Regarding the well-posedness analysis, the aforementioned non-monotonicity of the permeability suggest, for example, to use a fixed-point argument. We opt for freezing the arguments  of permeability, turning the double saddle-point structure with three perturbations coming from the stress trace operator and from the $L^2$ pressure blocks, into two decoupled saddle-point problems whose separate solvability can be established from the classical literature for weakly symmetric elasticity and mixed reaction-diffusion equations. Banach fixed-point theorem is then used to show well-posedness of the overall problem. This analysis needs to verify conditions of ball-mapping and contraction of the fixed-point map, and this imposes a  small data assumption, which can be carried over to the external load, mass source, boundary displacement, and boundary fluid pressure. \revX{Compared to \cite{lamichhane24}, these conditions are less restrictive and imply also a less restrictive discrete analysis (which follows closely the continuous one), due to the analysis being performed using the inverse of the Hooke tensor, which allows us to achieve robustness with respect to the first Lam\'e parameter $\lambda$.} 

Note that at the discrete level we can simply use conforming FE spaces. Discrete inf-sup conditions are already well-known for the chosen FE families of PEERS$_k$ and Raviart--Thomas elements used for the solid and fluid sub-problems (but several other inf-sup stable spaces that satisfy a discrete kernel characterisation are also possible). We emphasise that, similarly to \cite{lee16,li20}, all estimates hold uniformly in the limit of nearly incompressibility (implying that the formulation is Poisson locking-free) as well as when the constrained storage coefficient vanishes (poroelastic locking-free), and therefore they are free of non-physical pressure oscillations. 

An additional goal of this work is to derive efficient and reliable residual a posteriori error estimators for the nonlinear poroelasticity equations. The approach follows a similar treatment as that of \cite{gatica22} (which focuses on mixed formulations of stress-assisted diffusion equations), with the difference that here we do not need to include augmentation terms for the mixed form of the mixed diffusion problem. The main ingredients in the analysis of these estimates are Helmholtz decompositions and a global inf-sup (together with boundedness and Lipschitz continuity of coupling terms), local inverse and trace estimates, bubble-based localisation arguments, and properties of Cl\'ement and Raviart--Thomas interpolators. See also \cite{alvarez16,elyes18} for estimators in a similar multiphysics context and, e.g.,  \cite{carstensen1998posteriori,carstensen2000locking,LV2004} for mixed linear elasticity.  Note that for the reliability of the estimator the aforementioned Helmholtz decompositions -- for both tensor-vector and vector-scalar cases -- should be valid for mixed boundary conditions. For this we follow \cite{alvarez16} and \cite{cov2020c}, from which we \revX{inherit a convexity} assumption on the Neumann sub-boundary (where we impose traction and flux boundary conditions). 

\smallskip 
\noindent\textbf{Outline.} The rest of the paper is organised as follows. The remainder of this Section has a collection of preliminary definitions and notational convention, as well as the statement of the governing partial differential equations. The weak formulation and proofs of the uniform boundedness of the bilinear forms and suitable  inf-sup conditions are shown in Section~\ref{sec:weak}. The fixed-point analysis of the coupled problem is carried out in Section~\ref{sec:analysis}. Section~\ref{sec:FE} then focuses on the Galerkin discretisation, including its well-posedness analysis and definition of specific FE subspaces that provide \revX{equilibrium} and mass conservativity. In Section~\ref{sec:error} we show a C\'ea estimate and using appropriate approximation properties we derive optimal a priori error bounds including also the higher order case. The definition of a residual a posteriori error estimator and the proofs of its reliability and efficiency are presented in Section~\ref{sec:aposteriori}. We display in  Section~\ref{sec:numer} some numerical tests that both validate and underline the theoretical properties of the proposed discretisations, \revX{and close in Section~\ref{sec:concl} with a summary and a discussion on possible extensions.} 

\smallskip 
\noindent\textbf{Notation and preliminaries.}
Let $\rL^2(\Omega)$ be the set of all square-integrable functions in $\Omega \subset \BBR^d$ where $d \in \{2,3\}$ is the spatial dimension, and denote by $\bL^2(\Omega)=\rL^2(\Omega)^d$ its vector-valued counterpart and by $\bbL^2(\Omega)=\rL^2(\Omega)^{d\times d}$ its tensor-valued counterpart. We also write  
\[\bbLskew:=\{\btau \in \bbL^2(\Omega):\,\btau = -\btau^{\tt t} \},\]
to represent the skew-symmetric tensors in $\Omega$ with each  component being square-integrable. Standard notation will be employed for Sobolev spaces $\rH^m(\Omega)$ with $m\geq 0$ (and we note that $\rH^0(\Omega)=\rL^2(\Omega)$). Their norms and seminorms are denoted as $\|\cdot\|_{m,\Omega}$ and $|\cdot|_{m,\Omega}$, respectively (as well as for their vector and tensor-valued counterparts $\bH^m(\Omega)$, $\bbH^m(\Omega)$) see, e.g., \cite{BS94}. 

As usual $\revX{\mathbf{I}}$ stands for the identity tensor in $\bbR^{d\times d}$, 
and $|\cdot|$ denotes the Euclidean norm  in $\bbR^d$. Also, for any vector field $\bv=(v_i)_{i=1,d}$  we set the gradient and divergence operators as
$
\bnabla \bv \,:=\, \left(\frac{\partial v_i}{\partial x_j}\right)_{i,j=1,d}$ and  $\div\bv \,:=\, \sum_{j=1}^d \frac{\partial v_j}{\partial x_j}$. 
In addition, for any tensor fields $\btau=(\tau_{ij})_{i,j=1,d}$
and $\bzeta = (\zeta_{ij})_{i,j=1,d}$, we let $\bdiv\,\btau$ be the divergence operator $\div$ acting along the rows of $\btau$, and define the transpose, the trace, the tensor inner product, and the deviatoric tensor  as
$\btau^\rt := (\tau_{ji})_{i,j=1,d}$, 
$\tr(\btau) := \sum_{i=1}^d\tau_{ii}$,  
$\btau:\bzeta := \sum_{i,j=1}^n\tau_{ij}\zeta_{ij}$, and 
$\btau^\rd := \btau - \revX{\frac{1}{d}}\,\tr(\btau)\,\bI$, respectively. 
We also recall the Hilbert space
\[
\bH(\div;\Omega)\,:=\,\big\{\bz \in \bL^2(\Omega):\, \div\,\bz \in \mathrm{L}^2(\Omega)\big\},
\]
with norm $\|\bz\|_{\div;\Omega}^{2}:=\|\bz\|_{0,\Omega}^{2}+\|\div\,\bz\|_{0,\Omega}^{2}$, and introduce its tensor-valued version 
\[
\bbH(\bdiv;\Omega)\,:=\,\big\{\btau \in \bbL^2(\Omega):\, \bdiv\,\btau \in \bL^2(\Omega)\big\}.
\]

\noindent\textbf{Governing equations.} 
Let us consider a fully-saturated poroelastic medium (consisting of a mechanically isotropic and homogeneous fluid-solid mixture) occupying the open and bounded domain $\Omega$ in $\BBR^d$, the Lipschitz boundary $\partial\Omega$ is partitioned into disjoint sub-boundaries $\partial\Omega:= \overline{\Gamma_\rD} \cup \overline{\Gamma_\rN}$, and it is assumed for the sake of simplicity that both sub-boundaries are non-empty $|\Gamma_\rD|\cdot|\Gamma_\rN|>0$.
The symbol $\bn$ will stand for the unit outward normal vector on the boundary. Let $\fb \in \bL^2(\Omega)$ be a prescribed body force per unit of volume (acting on the fluid-structure mixture) and let $g \in L^2(\Omega)$ be a net volumetric fluid production rate.  

The \revX{equilibrium (balance of linear momentum)} for the solid-fluid mixture is written as 
\begin{equation}
\label{eq:mom} -\bdiv\, \bsigma = \fb \qquad \text{in $\Omega$},
\end{equation}
with $\bsigma$ being the total Cauchy stress tensor of the mixture (sum of the effective solid and fluid stresses), whose dependence on strain and on fluid pressure  is given by the constitutive assumption (or effective stress principle) 
\begin{equation}\label{eq:constitutive}
  \bsigma = \C \bepsilon (\bu) -\alpha p \revX{\mathbf{I}} \qquad \text{in $\Omega$}.
\end{equation}
Here the skeleton displacement vector $\bu$ from the position $\bx \in \Omega$ is an unknown, the tensor $\bepsilon (\bu) := \frac{1}{2} (\bnabla \bu + [\bnabla \bu]^{\tt t} )$ is the infinitesimal strain, by $\C $ we denote the fourth-order elasticity tensor, also known as Hooke's tensor (symmetric and positive definite and  characterised by \revX{$\C \btau := \lambda(\tr\btau)\mathbf{I} + 2\mu\,\btau$), $\lambda$ and $\mu$} are the Lam\'e parameters (assumed constant and positive), $0\leq \alpha \leq 1$ is the Biot--Willis parameter, and $p$ denotes the Darcy fluid pressure (positive in compression), which is an unknown in the system. 

We also consider the balance of angular momentum, which in this context states that the total poroelastic stress is a symmetric tensor 
$    \bsigma = \bsigma^{\tt t}$.
To weakly impose it, it is customary to use the rotation tensor
\begin{equation}\label{eq:vort}
\brho = \frac{1}{2} (\bnabla \bu - [\bnabla \bu]^{\tt t} ) = \bnabla \bu -\bepsilon (\bu).
\end{equation}
The fluid content (due to both fluid saturation and local volume dilation) is given by 
\begin{equation}\label{eq:content}
\zeta = c_0 p + \alpha \div \bu,
\end{equation}
where $c_0 \geq 0$ is the constrained specific storage 
coefficient. 
Using Darcy's law to describe the discharge velocity in terms of the fluid pressure gradient,  the balance of mass for the total amount of fluid is $\partial_t\zeta - \div (\bkappa \nabla p)  = g$ in $\Omega \times (0,t_{\mathrm{end}})$, where $\bkappa$ is the intrinsic permeability \revX{tensor} 
of 
the medium, a nonlinear function of the porosity.  In turn, in the small strains limit the porosity can be approximated by a linear function of the fluid content $\zeta$ (see for example \cite[Section 2.1]{van2023mathematical}), and so, thanks to 
\eqref{eq:content}, we can simply write 
\[\bkappa = \bkappa(\bepsilon (\bu),p).\]  
Furthermore, after a backward Euler semi-discretisation in time with a constant time step and rescaling appropriately, we only consider the type of equations needed to solve at each time step and therefore we will concentrate on the form 
\begin{equation}\label{eq:mass}
c_0 p + \alpha \tr\bepsilon (\bu) - \div (\bkappa(\bepsilon (\bu),p) \nabla p)  = g \qquad \text{in $\Omega$}.
\end{equation}
Typical constitutive relations for permeability are, e.g., exponential or Kozeny--Carman type (cf.  \cite{ateshian10}) 
\begin{equation}\label{eq:kappa}
    \bkappa
    (\bepsilon (\bu),p) 
    = \frac{k_0}{\mu_f}\mathbf{I} + \frac{k_1}{\mu_f}\exp(k_2 (c_0p + \alpha\tr\bepsilon (\bu)))\mathbf{I}, \quad \bkappa(\bepsilon (\bu),p) =   \frac{k_0}{\mu_f}\mathbf{I} +  \frac{k_1(c_0p + \alpha\tr\bepsilon (\bu))^3}{\mu_f(1-(c_0p + \alpha\tr\bepsilon (\bu)))^2}\mathbf{I},
\end{equation}
where $\mu_f$ denotes the viscosity of the interstitial fluid and $k_0,k_1,k_2$ are model constants. We note that in the case of incompressible constituents one has $c_0 = 0 $ and $\alpha = 1$, indicating that permeability depends only on the dilation $\tr\bepsilon (\bu) = \div\bu$ (see, e.g., \cite{bociu2016analysis}). We also note that even in such a scenario (of incompressible phases) the overall mixture is not necessarily incompressible itself. 
More precise assumptions on the behaviour of the permeability are postponed to Section~\ref{sec:stability-properties}.
Next we note that from \eqref{eq:constitutive} we can obtain
\begin{equation}\label{eq:trace-sigma}
 \tr \bsigma = (d\lambda+2\mu)\div\bu -d\alpha p \qan 
\C^{-1}\bsigma + \dfrac{\alpha}{d\lambda+2\mu} p \mathbf{I}  = \bepsilon (\bu) \qquad \text{in $\Omega$}.
\end{equation}
Then, from the first equation in 
\eqref{eq:trace-sigma} we get 
\begin{equation*}
\tr \bepsilon (\bu) = \dfrac{1}{d\lambda + 2\mu}\tr\bsigma + \dfrac{d\alpha}{d\lambda+2\mu} p,
\end{equation*}
and therefore 
the dependence of $\bkappa$ on $\bepsilon (\bu)$ and $p$ (cf.  \eqref{eq:kappa}) can be written in terms of $\bsigma$ and $p$ as follows 
\begin{equation}\label{const-kappa-2}
\begin{array}{cc}
\ds  \bkappa(\bsigma,p) = \frac{k_0}{\mu_f}\mathbf{I} + \frac{k_1}{\mu_f}\exp\Big(\dfrac{k_2}{d\lambda+2\mu} \big( (c_0(d\lambda+2\mu)+d\alpha^2)\, p + \alpha\tr\bsigma \big) \Big)\mathbf{I},\\[1ex]
\ds \bkappa(\bsigma,p) =   \frac{k_0}{\mu_f}\mathbf{I} +  \frac{k_1\big( (c_0(d\lambda+2\mu)+d\alpha^2)\, p + \alpha\tr\bsigma\big)^3}{(d\lambda+2\mu)\mu_f\big(d\lambda+2\mu-((c_0(d\lambda+2\mu)+d\alpha^2)\, p + \alpha\tr\bsigma)\big)^2}\mathbf{I},
  \end{array}
\end{equation}
\revY{and emphasize that the permeability tensor is the same constitutive law on either \eqref{eq:kappa} or \eqref{const-kappa-2}, so,  making abuse of notation we have 
\[ \bkappa = \bkappa (\zeta)= \bkappa(\bepsilon(\bu),p) = \bkappa(\bsigma,p). \]}
In addition, putting together the second equation in \eqref{eq:trace-sigma} and \eqref{eq:vort} we obtain:
\begin{equation}\label{eq:constitutive-3}
\C^{-1}\bsigma + \dfrac{\alpha}{d\lambda+2\mu} p \mathbf{I} = \bnabla \bu -\brho \qquad \text{in $\Omega$}.
\end{equation}

Finally, we introduce the discharge flux  $\bvarphi$ as an unknown defined by the constitutive relation 
\begin{equation}\label{eq:varphi}
\bkappa(\bsigma,p)^{-1}\bvarphi = \nabla p,
\end{equation}
and combining \eqref{eq:trace-sigma} and \eqref{eq:mass}, we are able to rewrite the mass balance equation as 
\begin{equation}\label{eq:mass2}
c_0 p + \dfrac{\alpha}{d\lambda+2\mu}\tr\bsigma + \dfrac{d\alpha^2}{d\lambda+2\mu} p - \div \bvarphi  = g \qquad \text{in $\Omega$}.
\end{equation}

To close the system, we consider  mixed boundary conditions for a given $\bu_\rD \in \bH^{1/2}(\Gamma_\rD)$ and $p_\rD \in \rH^{1/2}(\Gamma_\rD)$: 
\begin{equation}\label{eq:bc}
\bu = \bu_\rD \qan p = p_\rD  \qon \Gamma_\rD, \qquad 
\bvarphi\cdot \bn = 0 \qan  \bsigma \bn = \boldsymbol{0} \qon \Gamma_\rN.
\end{equation}


\section{Weak formulation and preliminary properties}\label{sec:weak}
\subsection{Derivation of weak forms}
Let us define the following spaces
\begin{align*}
\ds \bbH_\rN(\bdiv;\Omega)&:=\,\big\{\btau \in \bbH(\bdiv;\Omega):\, \btau\bn=\boldsymbol{0} \qon \Gamma_\rN\big\},\\ 
\ds \bH_\rN(\div;\Omega)&:=\,\big\{\bpsi \in \bH(\div;\Omega):\, \bpsi\cdot\bn=0 \qon \Gamma_\rN\big\}.
\end{align*}
We test equation \eqref{eq:mom} against $\bv\in\bL^2(\Omega)$,  equation \eqref{eq:constitutive-3} against $\btau\in\bbH_\rN(\bdiv;\Omega)$, impose the symmetry of $\bsigma$ weakly, test equation \eqref{eq:mass2} against $q\in\rL^2(\Omega)$,   equation \eqref{eq:varphi} against $\bpsi\in\bH_\rN(\div;\Omega)$, integrate by parts and \revX{use} the boundary conditions \eqref{eq:bc} naturally, and then reorder the resulting equations. Then we arrive at 
\begin{align*}
\ds  \int_{\Omega} \C^{-1}\bsigma:\btau + \dfrac{\alpha}{d\lambda+2\mu} \int_{\Omega} p\, \tr\btau  + \int_{\Omega} \bu\cdot\bdiv\btau + \int_{\Omega} \brho:\btau &= \ds \langle \btau\bn,\bu_\rD\rangle_{\Gamma_\rD}&\quad\forall\, \btau\in\bbH_\rN(\bdiv;\Omega),\\
\ds\int_{\Omega} \bv \cdot \bdiv\, \bsigma   &= \ds -\int_{\Omega}  \fb \cdot \bv&\quad \forall\,\bv\in\bL^2(\Omega),\\
\ds \int_{\Omega} \bsigma:\bbeta &=0 &\quad\forall\, \bbeta\in\bbLskew,\\[1ex]
\ds \int_{\Omega} \bkappa(\bsigma,p)^{-1} \bvarphi \cdot \bpsi +  \int_{\Omega}  p \div\bpsi &= \ds \langle \bpsi\cdot\bn,p_\rD\rangle_{\Gamma_\rD}&\quad\forall\, \bpsi\in\bH_\rN(\div;\Omega),\\
\ds \big(c_0 +\dfrac{d\alpha^2}{d\lambda+2\mu}\big)\int_{\Omega} p\,q +  \dfrac{\alpha}{d\lambda+2\mu} \int_{\Omega} q\, \tr\bsigma - \int_{\Omega}q\div\bvarphi&= \ds \int_{\Omega}  g\, q &\quad\forall\, q\in \rL^2(\Omega),
\end{align*}
where $\langle\cdot,\cdot\rangle_{\Gamma_\rD}$ denotes the duality pairing between $\rH^{-1/2}(\Gamma_\rD)$ and its dual $\rH^{1/2}(\Gamma_\rD)$ with respect to the inner product in $L^2(\Gamma_\rD)$, and we use the same notation, $\langle\cdot,\cdot\rangle_{\Gamma_\rD}$, in the vector-valued case. 

Next we proceed to introduce the  bilinear forms $a:\bbH_\rN(\bdiv;\Omega)\times\bbH_\rN(\bdiv;\Omega)\to\bbR$, $b:\revX{\bbH_\rN(\bdiv;\Omega)\times[\bL^2(\Omega) \times \bbLskew]\to\bbR}$, $c:\bbH_\rN(\bdiv;\Omega)\times\rL^2(\Omega)\to\bbR$, the nonlinear form $\wt a_{\wh\bsigma,\wh p}:\bH_\rN(\div;\Omega)\times\bH_\rN(\div;\Omega)\to\bbR$, and the \revX{bilinear forms} $\wt b:\bH_\rN(\div;\Omega) \times \rL^2(\Omega) \to\bbR$ and $\wt c:\rL^2(\Omega) \times \rL^2(\Omega) \to\bbR$, as follows 
\begin{equation}\label{eq:forms}
\begin{array}{c}
\ds a(\bsigma,\btau) := \int_{\Omega} \C^{-1}\bsigma:\btau,\quad 
b(\btau,(\bv,\bbeta)) := \int_{\Omega} \bv \cdot \bdiv\, \btau + \int_{\Omega} \btau:\bbeta,\quad c(\btau,q):=\dfrac{\alpha}{d\lambda+2\mu} \int_{\Omega} q\, \tr\btau, \\ [3ex]
\ds \wt a_{\wh\bsigma,\wh p}(\bvarphi,\bpsi) := \int_{\Omega} \bkappa(\wh\bsigma,\wh p)^{-1} \bvarphi \cdot \bpsi ,\quad
\wt b(\bpsi,q) := \int_{\Omega}  q \div\bpsi,\quad \wt c(p,q):=\big(c_0 +\dfrac{d\alpha^2}{d\lambda+2\mu}\big)\int_{\Omega} p\,q ,
\end{array}
\end{equation}
respectively, and  linear functionals $H\in\bbH_\rN(\bdiv;\Omega)'$, $F\in (\bL^2(\Omega)\times\bbLskew)'$, $\wt H\in\bH_\rN(\div;\Omega)'$,  $\cblue{\tilde{F}}\in \rL^2(\Omega)'$  
\begin{equation*}
H(\btau): = \langle \btau\bn,\bu_\rD\rangle_{\Gamma_\rD}, \quad 
F(\bv,\bbeta):=\ds -\int_{\Omega}  \fb \cdot \bv, \quad \wt H(\bpsi): = \langle \bpsi\cdot\bn,p_\rD\rangle_{\Gamma_\rD}, \quad 
\cblue{\tilde{F}}(q):=\ds -\int_{\Omega}  g\, q ,
\end{equation*}
we arrive at: 
find $(\bsigma,\bu,\brho,\bvarphi,p)\in \bbH_\rN(\bdiv;\Omega)\times\bL^2(\Omega)\times\bbLskew\times \bH_\rN(\div;\Omega)\times\rL^2(\Omega)$, such that:
\begin{subequations}\label{eq:weak-formulation}
\begin{align}\label{eq:2.2a}
\ds a(\bsigma,\btau)  & +\quad \ds b(\btau,(\bu,\brho))  &+\quad c(\btau,p)&= \ds H(\btau)&\forall\, \btau\in\bbH_\rN(\bdiv;\Omega),\\[1ex]\label{eq:2.2b}
\ds b(\bsigma,(\bv,\bbeta))  &  & & =\ds F(\bv,\bbeta)&\forall\,\bv\in\bL^2(\Omega),\, \forall\,\bbeta\in\bbLskew,\\[1ex]
\ds \wt a_{\bsigma,p}(\bvarphi,\bpsi)  & +\quad \ds \wt b(\bpsi,p)  & &= \ds \wt H(\bpsi)&\forall\, \bpsi\in\bH_\rN(\div;\Omega),\\[1ex]\label{eq:2.2d}
\ds \wt b(\bvarphi,q)  &-\quad\wt c(p,q) & -\quad c(\bsigma,q) & =\ds \cblue{\tilde{F}}(q)&\forall\, q\in\rL^2(\Omega).
\end{align}
\end{subequations}
\revY{In what follows, we stress that bilinear forms without tildes will refer to the perturbed saddle-point problem of the solid, and bilinear forms with tildes relate with the perturbed saddle-point sub-system of the interstitial fluid. Functions with hats will typically denote fixed quantities (which will be of importance in the fixed-point setting).}
\subsection{Stability properties and suitable inf-sup conditions}\label{sec:stability-properties}
For the sake of the analysis, we allow the permeability $\bkappa(\bsigma,p)$ to be anisotropic but still require $\bkappa(\bsigma,p)^{-1}$ to be uniformly positive definite  in $\bbL^\infty(\Omega)$ and  Lipschitz continuous with respect to $p\in \rL^2(\Omega)$. That is, there exist \revX{strictly positive} constants $\kappa_1,\kappa_2$ such that 
\begin{equation}\label{prop-kappa}
\kappa_1|\bv|^2 \leq \bv^{\tt t}\bkappa(\cdot,\cdot)^{-1}\bv, 
\qquad 
\|\bkappa(\cdot,p_1)^{-1} - \bkappa(\cdot,p_2)^{-1}\|_{\bbL^\infty(\Omega)} \leq \kappa_2 \|p_1 -p_2\|_{0,\Omega},
\end{equation}
for all $ \bv \in\mathbb{R}^d\setminus\{\bzero\}$, and for all $p_1,p_2\in \rL^2(\Omega)$. 

We start by establishing the boundedness of  the bilinear forms $a$, $b$, $c$, $\wt b$, $\wt c$:
\begin{subequations}
\begin{gather}
\label{eq:bounded-a-b}
\big|a(\bsigma,\btau)\big|\leq \dfrac{1}{\mu}\|\bsigma\|_{\bdiv;\Omega}  \|\btau\|_{\bdiv;\Omega},\qquad \big|b(\btau,(\bv,\bbeta))\big|\leq \|\btau\|_{\bdiv;\Omega}(\|\bv\|_{0,\Omega} + \|\bbeta\|_{0,\Omega}),  \\ \label{eq:bounded-c}
\big|c(\btau,q)\big|\leq \gamma \|\btau\|_{\bdiv;\Omega}\|q\|_{0,\Omega},
\\ \label{eq:bounded-hat-b-c}
\big|\wt b(\bpsi,q)\big|\leq \|\bpsi\|_{\div;\Omega}\|q\|_{0,\Omega} ,\qquad \big|\wt c(p,q)\big|\leq \wt\gamma \|p\|_{0,\Omega}\|q\|_{0,\Omega},
\end{gather}
\end{subequations}
where
\begin{equation}\label{eq:def-gamma}
\ds \gamma:=\dfrac{\alpha\sqrt{d}}{d\lambda+2\mu} \qan \wt\gamma:=c_0 +\dfrac{d\alpha^2}{d\lambda+2\mu}.
\end{equation}

On the other hand, using H\"older's and trace inequalities we can readily observe that the right-hand side functionals are all bounded
\begin{gather*}
  \big|H(\btau)\big|  \leq \|\bu_\rD\|_{1/2,\Gamma_\rD} \|\btau\|_{\bdiv;\Omega}, 
 \qquad 
 \big|F(\bv,\bbeta)\big|  \leq\|\fb\|_{0,\Omega}\|\bv\|_{0,\Omega}\leq \|\fb\|_{0,\Omega}(\|\bv\|_{0,\Omega} + \|\bbeta\|_{0,\Omega}), 
 \nonumber \\
\big|\wt H(\bpsi)\big|  \leq \|p_{\rD}\|_{1/2,\Gamma_\rD} \|\bpsi\|_{\div;\Omega}, \qquad \big|\cblue{\tilde{F}}(q)\big|  \leq \|g\|_{0,\Omega}\|q\|_{0,\Omega}.
\end{gather*}
Let us now denote by $\bbV$ and $\bV$ the kernels of $b$ and $\wt b$, respectively. They are characterised, respectively, as
\begin{subequations}
\begin{align}\label{eq:kernel-b}
 \bbV   
 &= \ds \Big\{\btau\in \bbH_\rN(\bdiv;\Omega) :\quad \bdiv\,\btau = \bzero \qan \btau = \btau^{\tt t} \qin \Omega\Big\},
\\
  \bV   
 &= \Big\{\bpsi\in \bH_\rN(\div;\Omega) :\quad  \div\,\bpsi = 0 \quad \text{in }\,\Omega\Big\}.
\end{align}\end{subequations}
From \cite[Lemmas 3.1 and 3.2]{agr2015} we easily deduce that there exists $c_a>0$ such that
\begin{equation}\label{eq:ellipticity-a}
\ds a(\btau,\btau) \geq c_a \|\btau\|_{\bdiv;\Omega}^2 \quad \forall\,\btau\in\bbV.
\end{equation}
\begin{remark}
\revY{From \cite[Lemma 2.2]{gatica06elast} and \cite[Lemma 2.2]{gatica14}, we have that the ellipticity constant $c_a$ has the form $c_a=\hat c \frac{1}{\mu}$, where $\hat c$ depends on $\Gamma_\rN$, $|\Omega|$, and the Poincar\'e constant. Based on the above, $c_a$ has a non-zero lower bound. }
\end{remark} 

The following inf-sup conditions are well-known to hold (see, e.g., \cite{cgorv2015}):
\begin{subequations}
 \begin{align}\label{eq:infsup-b}
\sup_{\bzero\neq\btau\in \bbH_\rN(\bdiv;\Omega) } \frac{b(\btau,(\bv,\bbeta))}{\|\btau\|_{\bdiv;\Omega}} &\geq \beta(\|\bv\|_{0,\Omega} +\|\bbeta\|_{0,\Omega})
\quad \forall\,(\bv,\bbeta)\in \bL^2(\Omega)\times\bbLskew,\\
\label{eq:infsup-hat-b}
\ds \sup_{\bzero\neq \bpsi\in \bH_\rN(\div;\Omega)}  \frac{\wt b(\bpsi,q)}{\|\psi\|_{\div;\Omega}} &\geq \wt\beta \|q\|_{0,\Omega}
\quad \forall\,q\in \rL^2(\Omega). 
\end{align}   
\end{subequations}
Finally, we observe that $\wt c$ is elliptic over $\rL^2(\Omega)$
\begin{equation}\label{eq:ellipticity-hat-c}
\ds \wt c(q,q) \geq \wt\gamma\, \|q\|_{0,\Omega}^2.
\end{equation}
\begin{remark}
\revX{Due to the careful choice of the bilinear form $a(\cdot,\cdot)$, constants in continuity estimates 
\eqref{eq:bounded-a-b} --\eqref{eq:bounded-hat-b-c} and inf-sup conditions
\eqref{eq:infsup-b} and \eqref{eq:infsup-hat-b}
do not blow up when $\lambda \to \infty$ and 
$c_0 \to 0$. In particular, the constants $\gamma$ and $\wt\gamma$, which depend on $\lambda$ and $c_0$ (cf. \eqref{eq:def-gamma}), remain bounded.}
\end{remark}

\section{Analysis of the coupled problem}\label{sec:analysis}
We now use a combination of the classical Babu\v ska--Brezzi and Banach fixed-point theorems to establish the well-posedness of \eqref{eq:weak-formulation} under appropriate assumptions on the data.
\subsection{A fixed‑point operator}
We adopt a similar approach to, e.g., \cite{cov2020}. 
\revX{First}, we define a closed ball of $\rL^2(\Omega)$ centred at the origin and of given radius $r>0$
\begin{equation}\label{eq:set-W}
 \rW := \{ \wh p \in\rL^2(\Omega) \,:\quad \|\wh p\|_{0,\Omega} \leq r \}.
\end{equation}
Then, for a given $(\wh\bsigma,\wh p)\in\bbH_\rN(\bdiv;\Omega)\times \rW$, thanks to the assumptions on the nonlinear permeability, we can infer that the form $\wt a_{\wh\bsigma,\wh p}$ (cf. \eqref{eq:forms}) is continuous, as well as coercive over $\bV$
\begin{subequations}
\begin{align}\label{eq:bound-a}
\ds \big|\wt a_{\wh\bsigma,\wh p}(\bvarphi,\bpsi)\big| &\leq C_{\wt a}\, \|\bvarphi\|_{\div;\Omega} \|\bpsi\|_{\div;\Omega},\\
\label{eq:ellipticity-hat-a}
\ds \wt a_{\wh\bsigma,\wh p}(\bpsi,\bpsi) &\geq \kappa_1 \|\bpsi\|_{\div;\Omega}^2 \quad\forall\,\bvarphi,\,\bpsi\in\bV.
\end{align}
\end{subequations}

Then, we define the auxiliary operators 
$\bR:\rW\subseteq\rL^2(\Omega)\to \bbH_\rN(\bdiv;\Omega)\times(\bL^2(\Omega)\times\bbLskew)$ and $\bS: \bbH_\rN(\bdiv;\Omega)\times\rW \to \bH_\rN(\div;\Omega)\times\rL^2(\Omega)$, given by
\begin{equation*}
\bR(\wh p):=\big(R_1(\wh p),(R_2(\wh p),R_3(\wh p))\big)=(\bsigma,(\bu,\brho)) \quad \forall\, \wh p\in\rW,
\end{equation*}
with $(\bsigma,(\bu,\brho))\in \bbH_\rN(\bdiv;\Omega)\times(\bL^2(\Omega)\times\bbLskew)$ satisfying
\begin{equation}\label{eq:weak1}
\begin{array}{rllll}
\ds a(\bsigma,\btau)  & +\quad \ds b(\btau,(\bu,\brho))  &= \ds H(\btau) - c(\btau,\wh p)&\forall\, \btau\in\bbH_\rN(\bdiv;\Omega),\\ [1ex]
\ds b(\bsigma,(\bv,\bbeta))  &  &  =\ds F(\bv,\bbeta)&\forall\,(\bv,\bbeta)\in\bL^2(\Omega)\times\bbLskew,
\end{array}
\end{equation}
and
\begin{equation*}
\bS(\wh \bsigma,\wh p):=\big(S_1(\wh \bsigma,\wh p),S_2(\wh \bsigma,\wh p) \big)=(\bvarphi,p) \quad \forall\, (\wh \bsigma,\wh p)\in\bbH_\rN(\bdiv;\Omega)\times\rW,
\end{equation*}
where $(\bvarphi,p)$ is such that
\begin{equation}\label{eq:weak2}
\begin{array}{rllll}
\ds \wt a_{\wh\bsigma,\wh p}(\bvarphi,\bpsi)  & +\quad \ds \wt b(\bpsi,p)  &=\ds \wt H(\bpsi)&\forall\, \bpsi\in\bH_\rN(\div;\Omega),\\ [1ex]
\ds \wt b(\bvarphi,q)  &-\quad\wt c(p,q) &  =\ds \cblue{\tilde{F}}(q) + c(\wh\bsigma,q)&\forall\, q\in\rL^2(\Omega) .
\end{array}
\end{equation}

By virtue of the above, by defining the operator $\bT:\rW\subseteq\rL^2(\Omega)\to \rL^2(\Omega)$ as
\begin{equation}\label{eq:def-T}
\bT(\wh p):=S_2(R_1(\wh p),\wh p),   
\end{equation}
it is clear that $(\bsigma,\bu,\brho,\bvarphi,p)$ is a solution to \eqref{eq:weak-formulation} if and only if $p\in\rW$ solves the fixed-point problem 
\begin{equation}\label{eq:fixed-point-problem}
\bT(p)=p.    
\end{equation}
Thus, in what follows, we focus on proving the unique solvability of \eqref{eq:fixed-point-problem}. 
\subsection{Well‑definedness of \texorpdfstring{$\bT$}{Lg}}
From the definition of $\bT$ in  \eqref{eq:def-T} it is evident that its well-definedness requires the well-posedness of problems \eqref{eq:weak1} and \eqref{eq:weak2}. We begin by analysing that of \eqref{eq:weak1}.
\begin{lemma}\label{lem:well-def-R}
Let	$\wh p \in \rW$ (cf. \eqref{eq:set-W}). Then, there exists a unique $(\bsigma,(\bu,\brho)) \in \bbH_\rN(\bdiv;\Omega)\times \bL^2(\Omega)\times\bbLskew$ solution to \eqref{eq:weak1}.
In addition, there exist $C_1,\,C_2>0$, such that
\begin{equation}\label{eq:dep-R}
\begin{array}{cc}
\|\bsigma\|_{\bdiv;\Omega} \leq C_1\, (\|\bu_\rD\|_{1/2,\Gamma_\rD} + \|\fb\|_{0,\Omega}) + \dfrac{1}{c_a}\,\gamma\,\|\wh p\|_{0,\Omega},\\[1ex]
\|\bu\|_{0,\Omega} + \|\brho\|_{0,\Omega} \leq C_2\, (\|\bu_\rD\|_{1/2,\Gamma_\rD} + \|\fb\|_{0,\Omega}) + \dfrac{1}{\beta}\Big(1 + \dfrac{1}{\mu c_a}  \Big)\,\gamma\,\|\wh p\|_{0,\Omega}.
\end{array}
\end{equation}
\end{lemma}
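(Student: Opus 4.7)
The plan is to apply the classical Babu\v{s}ka--Brezzi theory for saddle-point problems directly to \eqref{eq:weak1}. The system has exactly the standard structure: the bilinear form $a(\cdot,\cdot)$ on the diagonal of the top-left block, the bilinear form $b(\cdot,(\cdot,\cdot))$ coupling $\bbH_\rN(\bdiv;\Omega)$ with $\bL^2(\Omega)\times\bbLskew$, no perturbation on the lower block, and a right-hand side that is linear and bounded. All four hypotheses of the Brezzi theorem are already in place from Section~\ref{sec:weak}: boundedness of $a$ with constant $1/\mu$ (from \eqref{eq:bounded-a-b}), boundedness of $b$ with constant $1$ (from \eqref{eq:bounded-a-b}), ellipticity of $a$ on the kernel $\bbV$ with constant $c_a$ (from \eqref{eq:ellipticity-a}), and the inf-sup condition for $b$ with constant $\beta$ (from \eqref{eq:infsup-b}).

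The first step is to check that, for fixed $\wh p \in \rW$, the modified right-hand side of the first equation, namely $\btau \mapsto H(\btau) - c(\btau,\wh p)$, is a bounded linear functional on $\bbH_\rN(\bdiv;\Omega)$. Using the trace bound on $H$ and the continuity estimate \eqref{eq:bounded-c} for $c$, its norm is at most $\|\bu_\rD\|_{1/2,\Gamma_\rD} + \gamma\,\|\wh p\|_{0,\Omega}$. The right-hand side of the second equation is $F(\bv,\bbeta)$, whose norm is bounded by $\|\fb\|_{0,\Omega}$. Existence and uniqueness of $(\bsigma,(\bu,\brho))$ then follow immediately.

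To obtain the quantitative bounds \eqref{eq:dep-R}, I would invoke the standard Brezzi a priori estimates
\begin{equation*}
\|\bsigma\|_{\bdiv;\Omega} \;\leq\; \frac{1}{c_a}\,\|f_1\|_{\bbH_\rN(\bdiv;\Omega)'} \;+\; \frac{1}{\beta}\Big(1+\frac{1}{\mu\,c_a}\Big)\|f_2\|_{(\bL^2(\Omega)\times\bbLskew)'},
\end{equation*}
\begin{equation*}
\|\bu\|_{0,\Omega}+\|\brho\|_{0,\Omega} \;\leq\; \frac{1}{\beta}\Big(1+\frac{1}{\mu\,c_a}\Big)\|f_1\|_{\bbH_\rN(\bdiv;\Omega)'} \;+\; \frac{1}{\beta^2}\cdot\frac{1}{\mu}\Big(1+\frac{1}{\mu\,c_a}\Big)\|f_2\|_{(\bL^2(\Omega)\times\bbLskew)'},
\end{equation*}
where $f_1 := H(\cdot)-c(\cdot,\wh p)$ and $f_2 := F(\cdot,\cdot)$. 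Substituting the bounds on $\|f_1\|$ and $\|f_2\|$ computed in the previous step and collecting the constants depending on $c_a$, $\beta$, $\mu$ into $C_1$ and $C_2$ yields exactly the two estimates \eqref{eq:dep-R}, with the $\gamma\,\|\wh p\|_{0,\Omega}$ contribution isolated as advertised.

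No real obstacle is expected, as this is a verification-style argument. The only subtlety worth emphasising is bookkeeping: the Lam\'e parameter $\lambda$ must not appear in $C_1$, $C_2$, or the factors multiplying $\|\wh p\|_{0,\Omega}$, since robustness in the nearly incompressible limit hinges on this. This is automatic here because the continuity constant of $a$ is $1/\mu$ (independent of $\lambda$), the ellipticity constant $c_a$ is independent of $\lambda$ (cf.\ the remark after \eqref{eq:ellipticity-a}), and the $\lambda$-dependence is confined to $\gamma$, which is itself bounded as $\lambda\to\infty$.
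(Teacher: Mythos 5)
Your proposal is correct and takes essentially the same approach as the paper: both invoke the classical Babu\v{s}ka--Brezzi theory (the paper cites \cite[Th.~2.34]{ernguermond}) with the already-established properties \eqref{eq:ellipticity-a} and \eqref{eq:infsup-b}, bound the modified right-hand side functional $H(\cdot)-c(\cdot,\wh p)$ by $\|\bu_\rD\|_{1/2,\Gamma_\rD}+\gamma\|\wh p\|_{0,\Omega}$, and then read off $C_1$, $C_2$, and the $\gamma\|\wh p\|_{0,\Omega}$ contribution from the standard a priori estimates. Your bookkeeping of the constants matches the paper's (with $C_1=(\tfrac{1}{c_a}+\tfrac{1}{\beta})(1+\tfrac{1}{\mu c_a})$, $C_2=\tfrac{1}{\beta}(1+\tfrac{1}{\mu c_a})(1+\tfrac{1}{\mu\beta})$), and your remark about $\lambda$-independence is exactly the robustness the paper emphasises.
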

\begin{proof}
It is a direct consequence of the Babu\v ska--Brezzi theory \cite[Th. 2.34]{ernguermond}, using  \eqref{eq:ellipticity-a} and \eqref{eq:infsup-b}  with
\begin{equation}\label{eq:constant-C1}
C_1:=\Big(\dfrac{1}{c_a} + \dfrac{1}{\beta}  \Big)\Big(1 + \dfrac{1}{\mu c_a} \Big) \qan C_2:= \dfrac{1}{\beta}\Big(1 + \dfrac{1}{\mu c_a}  \Big)\Big(1 + \dfrac{1}{\mu \beta} \Big);
\end{equation}
we omit further details.
\end{proof}

Next, we provide the well-definedness of $\bS$, or equivalently, the well-posedness of \eqref{eq:weak2}.
\begin{lemma}\label{lem:well-def-S}
Let	$(\wh \bsigma,\wh p)\in\bbH_\rN(\bdiv;\Omega)\times\rW$. Then, there exists a unique $(\bvarphi,p) \in \bH_\rN(\div;\Omega)\times\rL^2(\Omega)$ solution to \eqref{eq:weak2}.
In addition, there exist $\wt C>0$ such that
\begin{equation}\label{eq:dep-S}
\|\bvarphi\|_{\div;\Omega} + \|p\|_{0,\Omega} \leq \wt C\, (\|g\|_{0,\Omega} + \|p_{\rD}\|_{1/2,\Gamma_\rD} + \gamma\|\wh\bsigma\|_{\bdiv;\Omega} ).
\end{equation}
\end{lemma}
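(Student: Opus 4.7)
The plan is to recognize that problem \eqref{eq:weak2} has exactly the structure of a perturbed saddle-point problem with bilinear form $\wt a_{\wh\bsigma,\wh p}$ in the $(1,1)$ block, $\wt b$ in the off-diagonals, and the perturbation $-\wt c$ in the $(2,2)$ block. All ingredients needed for the classical Babu\v{s}ka--Brezzi theory with perturbation in the lower-right block (see e.g. \cite[Th. 2.34]{ernguermond} or the perturbed saddle-point framework) are already assembled earlier in the text: continuity of $\wt a_{\wh\bsigma,\wh p}$ and its coercivity on the kernel $\bV$ of $\wt b$ (cf. \eqref{eq:bound-a}--\eqref{eq:ellipticity-hat-a}), continuity of $\wt b$ and the inf-sup condition \eqref{eq:infsup-hat-b}, and continuity together with positivity (ellipticity on the whole space) of $\wt c$ from \eqref{eq:bounded-hat-b-c} and \eqref{eq:ellipticity-hat-c}. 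The key observation is that the right-hand side is a bounded linear functional on $\bH_\rN(\div;\Omega)\times \rL^2(\Omega)$ for every fixed $(\wh\bsigma,\wh p)$, since the additional term $c(\wh\bsigma,q)$ is just a bounded linear functional in $q$ thanks to \eqref{eq:bounded-c}.

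First I would verify that, with $(\wh\bsigma,\wh p)$ treated as data, the mapping $\bpsi\mapsto \wt H(\bpsi)$ and the mapping $q\mapsto \tilde F(q) + c(\wh\bsigma,q)$ are indeed continuous linear functionals, with norms bounded respectively by $\|p_\rD\|_{1/2,\Gamma_\rD}$ and by $\|g\|_{0,\Omega} + \gamma\|\wh\bsigma\|_{\bdiv;\Omega}$. Next, I would invoke the abstract theorem for perturbed saddle-point problems (of the exact form $a\cdot +b^{\tt t}\cdot = \ell_1$, $b\cdot - c\cdot = \ell_2$ with $c$ symmetric positive semidefinite), whose hypotheses are precisely those listed in the previous paragraph; this yields the existence of a unique $(\bvarphi,p)\in\bH_\rN(\div;\Omega)\times\rL^2(\Omega)$ solving \eqref{eq:weak2}.

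Finally, the a priori bound \eqref{eq:dep-S} would be obtained from the stability estimate provided by the same abstract theorem, which expresses $\|\bvarphi\|_{\div;\Omega} + \|p\|_{0,\Omega}$ as a constant (depending on $\kappa_1$, $\wt\gamma$, $\wt\beta$, $C_{\wt a}$) times the norm of the right-hand side data. Substituting the bounds on $\wt H$, $\tilde F$, and $c(\wh\bsigma,\cdot)$ obtained in the first step, and absorbing all absolute constants into a single $\wt C>0$, yields precisely \eqref{eq:dep-S}. I expect no substantive obstacle here: the only mildly delicate point is keeping track of the dependence on $\gamma$ in front of $\|\wh\bsigma\|_{\bdiv;\Omega}$, which is needed later in the fixed-point argument to propagate robustness in the limits $\lambda\to\infty$ and $c_0\to 0$, as already emphasised in the remark following \eqref{eq:ellipticity-hat-c}.
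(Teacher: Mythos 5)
Your proposal is correct and follows essentially the same route as the paper: recognize \eqref{eq:weak2} as a perturbed saddle-point problem, verify continuity, kernel-coercivity \eqref{eq:ellipticity-hat-a}, inf-sup \eqref{eq:infsup-hat-b}, and positivity of $\wt c$ \eqref{eq:ellipticity-hat-c}, then invoke the abstract well-posedness result (the paper cites \cite[Th.~3.4]{cg2022}) and bound the right-hand side functionals, obtaining the $\gamma\|\wh\bsigma\|_{\bdiv;\Omega}$ dependence from \eqref{eq:bounded-c}. The only difference is that the paper spells out the intermediate constants $\wt C_1,\wt C_2,\wt C_3$ explicitly before assembling $\wt C$, whereas you absorb them immediately; this is cosmetic.
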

\begin{proof}
The existence of a unique solution $(\bvarphi,p)$ to \eqref{eq:weak2} is straightforward given the properties of the forms $\wt a$, $\wt b$, and $\wt c$. By examining \eqref{eq:ellipticity-hat-a}, \eqref{eq:ellipticity-hat-c}, and \eqref{eq:infsup-hat-b}, we can confirm that the assumptions of \cite[Th.~3.4]{cg2022} are satisfied. In addition, $\bvarphi$ and $p$ satisfy the following bounds 
\begin{align*}
\|\bvarphi\|_{\div;\Omega} &\leq \Big( \dfrac{1}{\kappa_1} + \wt C_1 + \wt C_1\sqrt{\wt\gamma} \Big)\Big(2\max\{\wt C_2,\,\wt C_3\} \Big)^{1/2} (\|g\|_{0,\Omega} + \|p_{\rD}\|_{1/2,\Gamma_\rD} + \gamma\,\|\wh\bsigma\|_{\bdiv;\Omega} ),\\
\|p\|_{0,\Omega} &\leq \dfrac{1}{\wt\beta}\Big(1 + \kappa_2 r\,\big(\dfrac{1}{\kappa_1} + \wt C_1 + \wt C_1\sqrt{\wt\gamma}\big) \big(2\max\{\wt C_2,\,\wt C_3\} \big)^{1/2}\Big) (\|g\|_{0,\Omega} + \|p_{\rD}\|_{1/2,\Gamma_\rD} + \gamma\,\|\wh\bsigma\|_{\bdiv;\Omega} ),
\end{align*}
where
\begin{equation*}
\ds \wt C_1:=\dfrac{1}{\wt\beta}\big(1 + \dfrac{C_{\wt a}}{\kappa_1}\big),\quad \wt C_2:= \dfrac{1}{\kappa_1} + \wt C_1 + \wt\gamma \, \wt C_1^2  \qan \wt C_3:= \wt C_1 \Big( 1 + \dfrac{C_{\wt a}}{\wt \beta} + \wt C_1 \dfrac{C_{\wt a}^2\, \wt\gamma}{\wt \beta^2}       \Big);
\end{equation*}
and the above implies \eqref{eq:dep-S} \revY{with 
\begin{equation}
 \label{eq:Ctilde}
 \wt C: = \Big( \dfrac{1}{\kappa_1} + \wt C_1 + \wt C_1\sqrt{\wt\gamma} \Big)\Big(2\max\{\wt C_2,\,\wt C_3\} \Big)^{1/2} + \dfrac{1}{\wt\beta}\Big(1 + \kappa_2 r\,\big(\dfrac{1}{\kappa_1} + \wt C_1 + \wt C_1\sqrt{\wt\gamma}\big) \big(2\max\{\wt C_2,\,\wt C_3\} \big)^{1/2}\Big).
\end{equation}
}
We leave out additional minor details.
\end{proof}
\begin{lemma}\label{lem:well-def-T}
Given $r>0$, let us assume that 
\begin{equation}\label{eq:assumption-T}
\wt C \big(1 +\,\gamma\,C_1 \big)\big( 
 \|g\|_{0,\Omega}+ \|p_{\rD}\|_{1/2,\Gamma_\rD} + \|\bu_\rD\|_{1/2,\Gamma_\rD} + \|\fb\|_{0,\Omega}   \big) + \dfrac{\wt C}{ c_a} \,\gamma^2 r\leq r,
\end{equation}
where $C_1$, \revY{$\widetilde{C}$}, and $\gamma$ are defined in \eqref{eq:constant-C1}, \revY{\eqref{eq:Ctilde}} and \eqref{eq:def-gamma}, respectively. Then, for a given $\widehat{p} \in \rW$ (cf. \eqref{eq:set-W}), there exists a unique $p \in \rW$ such that $\bT(\widehat{p}) = p$.
\end{lemma}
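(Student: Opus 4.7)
The plan is to observe that, since the operator $\bT$ is defined as the composition $\bT(\wh p)=S_2(R_1(\wh p),\wh p)$, existence and uniqueness of $p\in\rL^2(\Omega)$ with $\bT(\wh p)=p$ are already guaranteed by Lemmas~\ref{lem:well-def-R} and \ref{lem:well-def-S}. Indeed, given $\wh p \in \rW$, Lemma~\ref{lem:well-def-R} yields a unique $(\bsigma,(\bu,\brho))\in\bbH_\rN(\bdiv;\Omega)\times\bL^2(\Omega)\times\bbLskew$ solving \eqref{eq:weak1}, which in particular defines $\bsigma=R_1(\wh p)$. Then, since $\bsigma\in\bbH_\rN(\bdiv;\Omega)$ and $\wh p\in\rW$, Lemma~\ref{lem:well-def-S} provides a unique pair $(\bvarphi,p)$ solving \eqref{eq:weak2} with data $(\wh\bsigma,\wh p)=(\bsigma,\wh p)$, from which $p=S_2(R_1(\wh p),\wh p)=\bT(\wh p)$.

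The only non-trivial task is therefore to verify the ball-mapping property $\|\bT(\wh p)\|_{0,\Omega}\leq r$, so that $\bT(\wh p)\in\rW$. To this end I would chain the two a priori estimates \eqref{eq:dep-R} and \eqref{eq:dep-S}. Starting from \eqref{eq:dep-S} applied with $\wh\bsigma=\bsigma=R_1(\wh p)$,
\[
\|p\|_{0,\Omega}\leq \wt C\,\big(\|g\|_{0,\Omega}+\|p_\rD\|_{1/2,\Gamma_\rD}+\gamma\,\|\bsigma\|_{\bdiv;\Omega}\big),
\]
and then substituting the bound for $\|\bsigma\|_{\bdiv;\Omega}$ coming from \eqref{eq:dep-R}, namely
\[
\|\bsigma\|_{\bdiv;\Omega}\leq C_1\,\big(\|\bu_\rD\|_{1/2,\Gamma_\rD}+\|\fb\|_{0,\Omega}\big)+\tfrac{\gamma}{c_a}\,\|\wh p\|_{0,\Omega},
\]
and using $\|\wh p\|_{0,\Omega}\leq r$ since $\wh p\in\rW$, I would collect terms to obtain
\[
\|p\|_{0,\Omega}\leq \wt C\,(1+\gamma C_1)\big(\|g\|_{0,\Omega}+\|p_\rD\|_{1/2,\Gamma_\rD}+\|\bu_\rD\|_{1/2,\Gamma_\rD}+\|\fb\|_{0,\Omega}\big)+\tfrac{\wt C}{c_a}\,\gamma^2\,r.
\]
At this point the smallness assumption \eqref{eq:assumption-T} is exactly what is needed to bound the right-hand side by $r$, which concludes $\bT(\wh p)\in\rW$.

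There is no genuinely hard obstacle here; the structural work has already been done in Lemmas~\ref{lem:well-def-R} and \ref{lem:well-def-S}, and the proof is essentially an arithmetic step. The only delicate point is to handle correctly the factor $\gamma^2/c_a$ arising from the double substitution (through the coupling terms $c(\cdot,\wh p)$ and $c(\wh\bsigma,\cdot)$ in \eqref{eq:weak1}--\eqref{eq:weak2}), and to verify that the data-size hypothesis \eqref{eq:assumption-T} is written with the precise constants that emerge from the chain of bounds; the rest is routine.
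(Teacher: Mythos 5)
Your proposal is correct and follows essentially the same route as the paper: well-definedness of $\bT$ is inherited from Lemmas~\ref{lem:well-def-R} and \ref{lem:well-def-S}, and the ball-mapping property $\bT(\rW)\subseteq\rW$ is obtained by chaining \eqref{eq:dep-S} and \eqref{eq:dep-R} with $\|\wh p\|_{0,\Omega}\le r$ and invoking \eqref{eq:assumption-T}. You correctly identify the only delicate point (the $\gamma^2/c_a$ factor from the two-level coupling), and the constant bookkeeping matches the paper's.
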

\begin{proof}
From Lemmas \ref{lem:well-def-R} and \ref{lem:well-def-S}, we ascertain that the operators $\bR$ and $\bS$, respectively, are well-defined, thereby ensuring the well-definition of $\bT$. Furthermore, from  \eqref{eq:dep-R} and \eqref{eq:dep-S}, for each $\wh p \in\rW$, we deduce that
\begin{align*}
\|\bT(\wh p)\|_{0,\Omega} & = \|S_2(R_1(\wh p),\wh p)\|_{0,\Omega}\\
& \leq \wt C\, (\|g\|_{0,\Omega} + \|p_{\rD}\|_{1/2,\Gamma_\rD} ) + \wt C\,\gamma\,\|(R_1(\wh p)\|_{\bdiv;\Omega}\\
& \leq \wt C\, (\|g\|_{0,\Omega} + \|p_{\rD}\|_{1/2,\Gamma_\rD} ) + \wt C\,\gamma\,C_1\, (\|\bu_\rD\|_{1/2,\Gamma_\rD} + \|\fb\|_{0,\Omega}) + \dfrac{\wt C}{c_a}\,\gamma^2\|\wh p\|_{0,\Omega},
\end{align*}
this, combined with assumption \eqref{eq:assumption-T}, implies $\bT(\rW)\subseteq\rW$, which concludes the proof.
\end{proof}
\begin{remark}
\revY{Note that the argument used in Lemma \ref{lem:well-def-T} is as follows, for an arbitrary but fixed $r>0$, we define the ball $\rW$ (cf. \eqref{eq:set-W}). Then, for this fixed $r$, we assume that the data $\fb$, $g$, $\bu_\rD$, $p_{\rD}$, and $\gamma$ (cf. \eqref{eq:def-gamma}) are sufficiently small to satisfy Hypothesis \eqref{eq:assumption-T}.}
\end{remark} 
\begin{remark}
Another option for defining the operator $\bS$ (see \eqref{eq:weak2}) is to introduce the perturbation $\wt c$ on the right-hand side of the system, given by 
\begin{equation*}
\begin{array}{rllll}
\ds \wt a_{\wh\bsigma,\wh p}(\bvarphi,\bpsi)  & +\quad \ds \wt b(\bpsi,p)  &=\ds \wt H(\bpsi)&\forall\, \bpsi\in\bH_\rN(\div;\Omega),\\ [1ex]
\ds \wt b(\bvarphi,q)  & &  =\ds \cblue{\tilde{F}}(q) + c(\wh\bsigma,q) + \wt c(\wh p,q)&\forall\, q\in\rL^2(\Omega).
\end{array}
\end{equation*}
But in this case, the assumption of small data in \eqref{eq:assumption-T} (as well as in other instances, later on) would also involve the storativity parameter $c_0$, making the analysis slightly more restrictive.
\end{remark}
\subsection{Existence and uniqueness of weak solution}\label{sec:wellp} 
We begin by establishing two lemmas deriving conditions under which the operator $\bT$ is a contraction.
\begin{lemma}\label{lem:cotaR1}
Given $\wh p_1,\,\wh p_2, \in \rW$, the following estimate holds
\begin{equation}\label{eq:Lipschitz-continuity-R1}
\|R_1(\wh p_1) - R_1(\wh p_2)\|_{\bdiv;\Omega} \leq \dfrac{1}{c_a} \,\gamma\, \|\wh p_1- \wh p_2\|_{0,\Omega}.
\end{equation}
\end{lemma}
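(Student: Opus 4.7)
The strategy is a standard linearity and kernel argument. Denote $\bsigma_i := R_1(\wh p_i)$ and let $(\bu_i, \brho_i)$ be the associated displacement-rotation pair, so that $(\bsigma_i, (\bu_i, \brho_i))$ solves \eqref{eq:weak1} with data $\wh p = \wh p_i$, for $i=1,2$. Set $\bsigma := \bsigma_1 - \bsigma_2$, $\bu := \bu_1 - \bu_2$, $\brho := \brho_1 - \brho_2$. Since \eqref{eq:weak1} is linear in $\wh p$ (only the right-hand side of the first equation depends on $\wh p$, through $c(\btau, \wh p)$), subtracting gives the homogenised system
\begin{align*}
a(\bsigma,\btau) + b(\btau,(\bu,\brho)) &= - c(\btau,\wh p_1 - \wh p_2) \qquad \forall\, \btau\in\bbH_\rN(\bdiv;\Omega),\\
b(\bsigma,(\bv,\bbeta)) &= 0 \qquad \forall\,(\bv,\bbeta)\in\bL^2(\Omega)\times\bbLskew.
\end{align*}

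The second equation places $\bsigma$ in the kernel $\bbV$ of $b$ (cf. \eqref{eq:kernel-b}), so I would test the first equation against $\btau = \bsigma \in \bbV$, which kills the $b$-term, leaving
\[
a(\bsigma,\bsigma) \;=\; -\,c(\bsigma,\wh p_1 - \wh p_2).
\]
Applying the ellipticity of $a$ on $\bbV$ (cf. \eqref{eq:ellipticity-a}) on the left and the boundedness of $c$ (cf. \eqref{eq:bounded-c}) on the right yields
\[
c_a \|\bsigma\|_{\bdiv;\Omega}^2 \;\leq\; \gamma\, \|\bsigma\|_{\bdiv;\Omega}\, \|\wh p_1 - \wh p_2\|_{0,\Omega},
\]
and dividing by $\|\bsigma\|_{\bdiv;\Omega}$ (trivial if it vanishes) gives the claimed bound.

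The argument is essentially routine; there is no real obstacle, since the dependence on $\wh p$ enters only through the linear form $c(\cdot,\wh p)$ and the estimate needs only $\bbV$-ellipticity plus continuity of $c$. The one point worth highlighting is that the displacement and rotation components $\bu, \brho$ are not needed in the final bound: they are eliminated simply by testing in the kernel $\bbV$, which is precisely what makes the constant in \eqref{eq:Lipschitz-continuity-R1} sharp and independent of the inf-sup constant $\beta$.
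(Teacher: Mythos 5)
Your proof is correct and follows essentially the same route as the paper: subtract the two saddle-point problems, observe that the stress difference lies in the kernel $\bbV$, test with it to eliminate $b$, and then apply the $\bbV$-ellipticity of $a$ together with the boundedness of $c$. The remark about independence of the inf-sup constant $\beta$ is a nice observation, but the argument itself matches the paper's.
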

\begin{proof}
Let $(\bsigma_1,(\bu_1,\brho_1)),\,(\bsigma_2,(\bu_2,\brho_2)) \in \bbH_\rN(\bdiv;\Omega)\times(\bL^2(\Omega)\times\bbLskew)$, such that $\bR(\wh p_1)=(\bsigma_1,(\bu_1,\brho_1))$ ad $\bR(\wh p_2)=(\bsigma_2,(\bu_2,\brho_2))$. Then, from the definition of $\bR$ (cf. \eqref{eq:weak1}), we have
\begin{equation}\label{eq:auxiliar-eq-1}
\begin{array}{rllll}
\ds a(\bsigma_1-\bsigma_2,\btau)  & +  \ds b(\btau,(\bu_1-\bu_2,\brho_1-\brho_2))  &= \ds - c(\btau,\wh p_1- \wh p_2)&\forall\, \btau\in\bbH_\rN(\bdiv;\Omega),\\ [1ex]
\ds b(\bsigma_1-\bsigma_2,(\bv,\bbeta))  &  &  =\ds 0&\forall\,(\bv,\bbeta)\in\bL^2(\Omega)\times\bbLskew.
\end{array}
\end{equation}
Since $\bsigma_1-\bsigma_2 \in\bbV$ (cf. \eqref{eq:kernel-b}), taking $\btau = \bsigma_1-\bsigma_2$ in \eqref{eq:auxiliar-eq-1}, and utilising the ellipticity of $a$ on $\bbV$ (cf. \eqref{eq:ellipticity-a}) along with the bound of $c$ (cf. \eqref{eq:bounded-c}), we obtain:
\begin{equation*}
\ds c_a \|\bsigma_1-\bsigma_2\|_{\bdiv;\Omega}^2 \leq a(\bsigma_1-\bsigma_2,\bsigma_1-\bsigma_2)  =  - c(\bsigma_1-\bsigma_2,\wh p_1- \wh p_2) \leq \,\gamma\,\|\bsigma_1-\bsigma_2\|_{\bdiv;\Omega}\|\wh p_1- \wh p_2\|_{0,\Omega},
\end{equation*}
which concludes the proof.
\end{proof}
\begin{lemma}\label{lem:cotaS-1-2}
Given $(\wh\bsigma_1,\,\wh p_1),\,(\wh\bsigma_2,\,\wh p_2), \in \bbH_\rN(\bdiv;\Omega)\times\rW$, the following estimate holds
\begin{equation}\label{eq:Lipschitz-continuity-S-1-2}
\begin{array}{ll}
\|S_2(\wh\bsigma_1,\,\wh p_1) - S_2(\wh\bsigma_2,\,\wh p_2)\|_{0,\Omega}\\[1ex]
\ds\leq \dfrac{2\kappa_2\,  \wt C}{\min\{\wt\gamma,\,\kappa_1\}}\, \big(\|g\|_{0,\Omega} + \|p_{\rD}\|_{1/2,\Gamma_\rD} + \gamma\|\wh\bsigma_2\|_{\bdiv;\Omega} \big) \|\wh p_1 - \wh p_2\|_{0,\Omega} + \dfrac{2}{\min\{\wt\gamma,\,\kappa_1\}}\,\gamma\,\|\wh\bsigma_1-\wh\bsigma_2\|_{\bdiv;\Omega}.
\end{array}
\end{equation}
\end{lemma}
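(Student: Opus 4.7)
The plan is to reduce the difference $(\bvarphi_1-\bvarphi_2,\,p_1-p_2)$, where $(\bvarphi_i,p_i):=\bS(\wh\bsigma_i,\wh p_i)$ for $i=1,2$, to a linear perturbed saddle-point problem driven by two ``residual'' sources, and then to extract the desired $\rL^2(\Omega)$-bound for $p_1-p_2$ via a short energy argument. Writing \eqref{eq:weak2} for $i=1,2$, subtracting, and adding and subtracting $\wt a_{\wh\bsigma_1,\wh p_1}(\bvarphi_2,\bpsi)$ in the first equation, I would arrive at
\begin{align*}
\wt a_{\wh\bsigma_1,\wh p_1}(\bvarphi_1-\bvarphi_2,\bpsi)+\wt b(\bpsi,p_1-p_2) &= -\bigl[\wt a_{\wh\bsigma_1,\wh p_1}-\wt a_{\wh\bsigma_2,\wh p_2}\bigr](\bvarphi_2,\bpsi),\\
\wt b(\bvarphi_1-\bvarphi_2,q)-\wt c(p_1-p_2,q) &= c(\wh\bsigma_1-\wh\bsigma_2,q),
\end{align*}
for all admissible $\bpsi\in\bH_\rN(\div;\Omega)$ and $q\in\rL^2(\Omega)$.

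Next I would test the first identity with $\bpsi=\bvarphi_1-\bvarphi_2$ and the second with $q=p_1-p_2$, and subtract them. The cross term $\wt b(\bvarphi_1-\bvarphi_2,p_1-p_2)$ cancels, leaving an energy identity whose left-hand side is bounded below by $\min\{\kappa_1,\wt\gamma\}\bigl(\|\bvarphi_1-\bvarphi_2\|_{0,\Omega}^2+\|p_1-p_2\|_{0,\Omega}^2\bigr)$ thanks to the pointwise positivity of $\bkappa^{-1}$ in \eqref{prop-kappa} and to the coercivity \eqref{eq:ellipticity-hat-c} of $\wt c$. The right-hand side is controlled by Cauchy--Schwarz combined with the Lipschitz estimate in \eqref{prop-kappa} and the continuity bound \eqref{eq:bounded-c} of $c$, yielding an inequality of the form $\min\{\kappa_1,\wt\gamma\}(X^2+Y^2)\leq AX+BY$ with $X:=\|\bvarphi_1-\bvarphi_2\|_{0,\Omega}$, $Y:=\|p_1-p_2\|_{0,\Omega}$, $A:=\kappa_2\|\wh p_1-\wh p_2\|_{0,\Omega}\|\bvarphi_2\|_{0,\Omega}$, and $B:=\gamma\|\wh\bsigma_1-\wh\bsigma_2\|_{\bdiv;\Omega}$.

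A routine Young/Cauchy--Schwarz manipulation in $\bbR^2$ then yields $Y\leq \tfrac{2(A+B)}{\min\{\kappa_1,\wt\gamma\}}$. Inserting the a priori estimate \eqref{eq:dep-S} applied to $\bvarphi_2=S_1(\wh\bsigma_2,\wh p_2)$, namely $\|\bvarphi_2\|_{0,\Omega}\leq\|\bvarphi_2\|_{\div;\Omega}\leq \wt C\,(\|g\|_{0,\Omega}+\|p_\rD\|_{1/2,\Gamma_\rD}+\gamma\|\wh\bsigma_2\|_{\bdiv;\Omega})$, produces exactly \eqref{eq:Lipschitz-continuity-S-1-2}.

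The main subtlety I anticipate concerns the use of coercivity for $\wt a_{\wh\bsigma_1,\wh p_1}$: the statement \eqref{eq:ellipticity-hat-a} only guarantees $\div$-norm ellipticity on the kernel $\bV$ of $\wt b$, whereas $\bvarphi_1-\bvarphi_2$ does not belong to $\bV$ because the right-hand side of the second difference equation is nonzero. The workaround is to fall back on the pointwise lower bound in \eqref{prop-kappa}, which delivers $\rL^2(\Omega)$-coercivity of $\wt a_{\wh\bsigma_1,\wh p_1}$ on all of $\bH_\rN(\div;\Omega)$; this weaker form of coercivity is enough since the target estimate only requires $\rL^2$-norms of the two differences.
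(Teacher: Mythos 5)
Your proposal is correct and follows essentially the same route as the paper: subtract the two perturbed saddle-point systems, add and subtract $\wt a_{\wh\bsigma_1,\wh p_1}(\bvarphi_2,\cdot)$, test with $(\bvarphi_1-\bvarphi_2,\,p_1-p_2)$ so the $\wt b$-terms cancel, use the pointwise $\rL^2$-coercivity from \eqref{prop-kappa} and \eqref{eq:ellipticity-hat-c}, bound the right-hand side via the Lipschitz estimate and \eqref{eq:bounded-c}, and finally invoke \eqref{eq:dep-S} for $\|\bvarphi_2\|_{\div;\Omega}$. Your observation about why kernel-only coercivity \eqref{eq:ellipticity-hat-a} would not suffice and why the pointwise $\rL^2$-bound in \eqref{prop-kappa} must be used instead is exactly the mechanism the paper relies on, though it leaves it implicit.
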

\revX{Note 
that $\min\{\tilde\gamma, \kappa_1\}$ enters in the denominator of the estimate, but 
 we have assumed that $\kappa_1$ is strictly positive and it does not degenerate to zero.}
\begin{proof}
Let $(\bvarphi_1,p_1),\,(\bvarphi_2,p_2)\in \bH_\rN(\div;\Omega)\times\rL^2(\Omega)$, such that $\bS(\wh\bsigma_1,\wh p_1)=(\bvarphi_1,p_1)$ and $\bS(\wh\bsigma_2,\wh p_2)=(\bvarphi_2, p_2)$. Then, from the definition of $\bS$ (cf. \eqref{eq:weak2}), and employing similar arguments to those in Lemma~\ref{lem:cotaR1}, we have
\begin{equation*}
\ds \wt a_{\wh\bsigma_1,\wh p_1}(\bvarphi_1,\bvarphi_1-\bvarphi_2) - \wt a_{\wh\bsigma_2,\wh p_2}(\bvarphi_2,\bvarphi_1-\bvarphi_2) + \wt c(p_1-p_2,p_1-p_2) = - c(\wh\bsigma_1-\wh\bsigma_2,p_1-p_2),
\end{equation*}
by adding $\pm \wt a_{\wh\bsigma_1,\wh p_1}(\bvarphi_2,\bvarphi_1-\bvarphi_2)$ in the last equation, we obtain
\begin{align*}
 &\wt a_{\wh\bsigma_1,\wh p_1}(\bvarphi_1-\bvarphi_2,\bvarphi_1-\bvarphi_2) + \wt c(p_1-p_2,p_1-p_2)
 \\
&\qquad =  \wt a_{\wh\bsigma_2,\wh p_2}(\bvarphi_2,\bvarphi_1-\bvarphi_2) - \wt a_{\wh\bsigma_1,\wh p_1}(\bvarphi_2,\bvarphi_1-\bvarphi_2) - c(\wh\bsigma_1-\wh\bsigma_2,p_1-p_2).
\end{align*}
Then, using the the first assumption for $\bkappa$ (cf. \eqref{prop-kappa}), the ellipticity of $\wt c$ (see \eqref{eq:ellipticity-hat-c}), the definition of $\wt a_{\wh\bsigma,\wh p}$ (cf. \eqref{eq:forms}) and the continuity of the form $c$ (see \eqref{eq:bounded-c}), we deduce
\begin{equation*}
\begin{array}{ll}
\ds \kappa_1 \|\bvarphi_1-\bvarphi_2\|_{0,\Omega}^2 + \wt\gamma \|p_1-p_2\|_{0,\Omega}^2 \leq \wt a_{\wh\bsigma_1,\wh p_1}(\bvarphi_1-\bvarphi_2,\bvarphi_1-\bvarphi_2) + \wt c(p_1-p_2,p_1-p_2)\\[1ex]
\ds \qquad = \int_{\Omega} (\bkappa(\wh \bsigma_2,\wh p_2)^{-1}-\bkappa(\wh \bsigma_1,\wh p_1)^{-1})\, \bvarphi_2 \cdot (\bvarphi_1-\bvarphi_2) - c(\wh\bsigma_1-\wh\bsigma_2,p_1-p_2)\\[2.5ex]
\ds \qquad \leq \|\bkappa(\wh \bsigma_2,\wh p_2)^{-1}- \bkappa(\wh \bsigma_1,\wh p_1)^{-1}\|_{\bbL^\infty(\Omega)} \, \|\bvarphi_2\|_{0,\Omega} \|\bvarphi_1-\bvarphi_2\|_{0,\Omega} + \,\gamma\,\|\wh\bsigma_1-\wh\bsigma_2\|_{\bdiv;\Omega}\|p_1-p_2\|_{0,\Omega}.
\end{array}
\end{equation*}
From the last equation, by utilising the second assumption regarding $\bkappa$ (see \eqref{prop-kappa}), we obtain
\begin{equation*}
\begin{array}{ll}
\frac12\min\{\wt\gamma,\kappa_1\}(\|\bvarphi_1-\bvarphi_2\|_{0,\Omega} + \|p_1-p_2\|_{0,\Omega})^2 \leq \kappa_1 \|\bvarphi_1-\bvarphi_2\|_{0,\Omega}^2 + \wt\gamma \|p_1-p_2\|_{0,\Omega}^2\\[1ex] 
\qquad\leq  \kappa_2\|\wh p_2-\wh p_1\|_{0,\Omega}\|\bvarphi_2\|_{0,\Omega} \|\bvarphi_1-\bvarphi_2\|_{0,\Omega} + \,\gamma\,\|\wh\bsigma_1-\wh\bsigma_2\|_{\bdiv;\Omega}\|p_1-p_2\|_{0,\Omega}\\[1ex]
\ds \qquad \leq \big(\kappa_2\|\wh p_2-\wh p_1\|_{0,\Omega}\|\bvarphi_2\|_{0,\Omega}  + \,\gamma\,\|\wh\bsigma_1-\wh\bsigma_2\|_{\bdiv;\Omega}\big) \big(\|\bvarphi_1-\bvarphi_2\|_{0,\Omega} + \|p_1-p_2\|_{0,\Omega}\big),
\end{array}
\end{equation*}
the last, together with the fact that $\bvarphi_2$ satisfies \eqref{eq:dep-S}, leads to the following bound 
\begin{equation*}
\begin{array}{ll}
\frac12\min\{\wt\gamma,\kappa_1\}(\|\bvarphi_1-\bvarphi_2\|_{0,\Omega} + \|p_1-p_2\|_{0,\Omega}) \leq \kappa_2\|\wh p_2-\wh p_1\|_{0,\Omega}\|\bvarphi_2\|_{0,\Omega}  + \,\gamma\,\|\wh\bsigma_1-\wh\bsigma_2\|_{\bdiv;\Omega}\\[1ex]
\ds \qquad \leq  \kappa_2\|\wh p_2-\wh p_1\|_{0,\Omega}\wt C\, (\|g\|_{0,\Omega} + \|p_{\rD}\|_{1/2,\Gamma_\rD} + \gamma\|\wh\bsigma_2\|_{\bdiv;\Omega} )  + \,\gamma\,\|\wh\bsigma_1-\wh\bsigma_2\|_{\bdiv;\Omega},
\end{array}
\end{equation*}
and this yields \eqref{eq:Lipschitz-continuity-S-1-2}, concluding the proof.
\end{proof}

The following theorem presents the main result of this section, establishing the existence and uniqueness of the solution to the fixed-point problem \eqref{eq:fixed-point-problem}, or equivalently, of \eqref{eq:weak-formulation}.
\begin{theorem}\label{theorem:unique-solution-weak1}
Given $r>0$, assume that $\fb \in \bL^2(\Omega)$, $g \in L^2(\Omega)$, $\bu_\rD \in \bH^{1/2}(\Gamma_\rD)$, $p_{\rD} \in \rH^{1/2}(\Gamma_\rD)$ and $\gamma$ satisfies
\begin{equation}\label{eq:assumption-T-2}
\begin{array}{cc}
\dfrac{2 \max\{1,\,\kappa_2\}  }{\min\{\wt\gamma,\,\kappa_1\,, r\}}\, \bigg\{  \wt C (1 + C_1\gamma)(\|g\|_{0,\Omega} + \|p_{\rD}\|_{1/2,\Gamma_\rD} + \|\bu_\rD\|_{1/2,\Gamma_\rD} + \|\fb\|_{0,\Omega})+\dfrac{\gamma^2}{c_a}(\dfrac{1}{\kappa_2} +\wt C \, r)\bigg\}   < 1.
 \end{array}
\end{equation}
Then, $\bT$ (cf. \eqref{eq:def-T}) has a unique fixed point $p\in\rW$. Equivalently,  \eqref{eq:weak-formulation} has a unique solution $(\bsigma,\bu,\brho,\bvarphi,p)\in \bbH_\rN(\bdiv;\Omega)\times\bL^2(\Omega)\times\bbLskew\times \bH_\rN(\div;\Omega)\times\rW$. In addition, 
there exists $C>0$, such that
\begin{align}\label{eq:stability}
 \nonumber &\|\bsigma\|_{\bdiv;\Omega} + \|\bu\|_{0,\Omega} + \|\brho\|_{0,\Omega} + \|\bvarphi\|_{\div;\Omega} + \|p\|_{0,\Omega} \\
 &\qquad \leq C (\|g\|_{0,\Omega} + \|p_{\rD}\|_{1/2,\Gamma_\rD} + \|\bu_\rD\|_{1/2,\Gamma_\rD} + \|\fb\|_{0,\Omega} + \gamma \,r\,).
\end{align}
\end{theorem}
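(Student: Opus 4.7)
The plan is to apply Banach's fixed-point theorem to the operator $\bT$ on the closed ball $\rW$, using the two Lipschitz-type estimates already established in Lemmas~\ref{lem:cotaR1} and \ref{lem:cotaS-1-2}. I would structure the argument into three main stages: ball-mapping, contraction, and derivation of the stability bound.

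\textbf{Ball-mapping.} First I would check that hypothesis \eqref{eq:assumption-T-2} implies the milder hypothesis \eqref{eq:assumption-T} required in Lemma~\ref{lem:well-def-T}. Since $\min\{\wt\gamma,\kappa_1,r\}\le r$ and $2\max\{1,\kappa_2\}\ge 1$, and since the quantity $\tfrac{\gamma^2}{c_a}\bigl(\tfrac{1}{\kappa_2}+\wt C r\bigr)$ dominates $\tfrac{\wt C\gamma^2 r}{c_a}$, a straightforward rearrangement shows that the left-hand side of \eqref{eq:assumption-T-2} is an upper bound for $\tfrac{1}{r}\bigl[\wt C(1+\gamma C_1)(\|g\|_{0,\Omega}+\|p_\rD\|_{1/2,\Gamma_\rD}+\|\bu_\rD\|_{1/2,\Gamma_\rD}+\|\fb\|_{0,\Omega})+\tfrac{\wt C\gamma^2 r}{c_a}\bigr]$. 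Hence Lemma~\ref{lem:well-def-T} applies and $\bT(\rW)\subseteq\rW$.

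\textbf{Contraction.} Given $\wh p_1,\wh p_2\in\rW$, I would write $\bT(\wh p_i)=S_2(R_1(\wh p_i),\wh p_i)$ and apply Lemma~\ref{lem:cotaS-1-2} with $\wh\bsigma_i=R_1(\wh p_i)$ to obtain
\begin{equation*}
\|\bT(\wh p_1)-\bT(\wh p_2)\|_{0,\Omega}
\le \frac{2\kappa_2\wt C}{\min\{\wt\gamma,\kappa_1\}}\bigl(\|g\|_{0,\Omega}+\|p_\rD\|_{1/2,\Gamma_\rD}+\gamma\|R_1(\wh p_2)\|_{\bdiv;\Omega}\bigr)\|\wh p_1-\wh p_2\|_{0,\Omega}
+\frac{2\gamma}{\min\{\wt\gamma,\kappa_1\}}\|R_1(\wh p_1)-R_1(\wh p_2)\|_{\bdiv;\Omega}.
\end{equation*}
Then I would insert Lemma~\ref{lem:cotaR1} for the second term and Lemma~\ref{lem:well-def-R} together with $\|\wh p_2\|_{0,\Omega}\le r$ for the first. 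Collecting terms and using $\kappa_2\le\max\{1,\kappa_2\}$ along with $\min\{\wt\gamma,\kappa_1\}\ge\min\{\wt\gamma,\kappa_1,r\}$ yields precisely the bound
\begin{equation*}
\|\bT(\wh p_1)-\bT(\wh p_2)\|_{0,\Omega}
\le \frac{2\max\{1,\kappa_2\}}{\min\{\wt\gamma,\kappa_1,r\}}\Bigl\{\wt C(1+C_1\gamma)(\|g\|_{0,\Omega}+\|p_\rD\|_{1/2,\Gamma_\rD}+\|\bu_\rD\|_{1/2,\Gamma_\rD}+\|\fb\|_{0,\Omega})+\tfrac{\gamma^2}{c_a}\bigl(\tfrac{1}{\kappa_2}+\wt C r\bigr)\Bigr\}\,\|\wh p_1-\wh p_2\|_{0,\Omega},
\end{equation*}
and by \eqref{eq:assumption-T-2} the multiplicative constant is strictly less than one, so $\bT:\rW\to\rW$ is a contraction. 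Banach's fixed-point theorem then delivers the unique $p\in\rW$ with $\bT(p)=p$, which, by the definition \eqref{eq:def-T}, is equivalent to the unique solvability of \eqref{eq:weak-formulation}.

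\textbf{Stability.} Once the fixed point is in hand, the a priori estimate \eqref{eq:stability} follows by combining the bounds in Lemmas~\ref{lem:well-def-R} and \ref{lem:well-def-S}, applied with $\wh p=p$ and $\wh\bsigma=\bsigma$, together with $\|p\|_{0,\Omega}\le r$. The key obstacle I anticipate is not any of the individual inequalities -- those are essentially done -- but the careful bookkeeping of constants needed to match exactly the compact form of \eqref{eq:assumption-T-2}; in particular, recognising that bounding $\kappa_2$ and $1$ separately via $\max\{1,\kappa_2\}$ and rewriting $1=\kappa_2\cdot\kappa_2^{-1}$ is what collapses the Lipschitz constant into the symmetric form displayed above. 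The resulting $C$ in \eqref{eq:stability} is then explicit in terms of $C_1$, $C_2$, $\wt C$, $c_a$ and $\gamma$.
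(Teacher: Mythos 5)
Your proposal is correct and follows essentially the same route as the paper's proof: it invokes Lemma~\ref{lem:well-def-T} for ball-mapping (after checking that \eqref{eq:assumption-T-2} implies \eqref{eq:assumption-T}), combines Lemmas~\ref{lem:cotaS-1-2}, \ref{lem:cotaR1}, and \ref{lem:well-def-R} to show $\bT$ is a contraction, applies the Banach fixed-point theorem, and obtains \eqref{eq:stability} from Lemmas~\ref{lem:well-def-R}--\ref{lem:well-def-S}. The only (cosmetic) difference is that you collapse the Lipschitz constant directly into the form displayed in \eqref{eq:assumption-T-2} via $\max\{1,\kappa_2\}$ and $\min\{\wt\gamma,\kappa_1,r\}$, whereas the paper derives a slightly tighter intermediate bound and then compares it to the hypothesis; both inferences are valid.
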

\begin{proof}
Recall that \eqref{eq:assumption-T-2} ensures the well-definedness of $\bT$. Let $\wh p_1,\,\wh p_2,\, p_1,\, p_2 \in\rW$, such that $\bT(\wh p_1)=p_1$ and $\bT(\wh p_2)=p_2$. From the definition of $\bT$ (see \eqref{eq:def-T}), and the estimates \eqref{eq:Lipschitz-continuity-S-1-2} and \eqref{eq:Lipschitz-continuity-R1}, we deduce
\begin{align*}
& \|p_1-p_2\|_{0,\Omega} = \|\bT(\wh p_1)-\bT(\wh p_2)\|_{0,\Omega} = \|S_2(R_1(\wh p_1),\wh p_1)-S_2(R_1(\wh p_2),\wh p_2)\|_{0,\Omega}\\
&\leq \dfrac{2\kappa_2\,  \wt C}{\min\{\wt\gamma,\,\kappa_1\}}\, \big(\|g\|_{0,\Omega} + \|p_{\rD}\|_{1/2,\Gamma_\rD} + \gamma\|R_1(\wh p_2)\|_{\bdiv;\Omega} \big) \|\wh p_1 - \wh p_2\|_{0,\Omega} \\
&\qquad + \dfrac{2}{\min\{\wt\gamma,\,\kappa_1\}}\,\gamma\,\|R_1(\wh p_1)-R_1(\wh p_2)\|_{\bdiv;\Omega}\\
&\leq \dfrac{2\kappa_2\,  \wt C}{\min\{\wt\gamma,\,\kappa_1\}}\, \big(\|g\|_{0,\Omega} + \|p_{\rD}\|_{1/2,\Gamma_\rD}\big)\|\wh p_1 - \wh p_2\|_{0,\Omega} + \dfrac{2\kappa_2\,  \wt C}{\min\{\wt\gamma,\,\kappa_1\}}\gamma\|R_1(\wh p_2)\|_{\bdiv;\Omega} \|\wh p_1 - \wh p_2\|_{0,\Omega}\\
& \qquad+ \dfrac{2}{c_a\,\min\{\wt\gamma,\,\kappa_1\}}\,\gamma^2\, \|\wh p_1- \wh p_2\|_{0,\Omega},
\end{align*}
the above, along with the fact that $R_1(\wh p_2)$ satisfies \eqref{eq:dep-R} and $\wh p_2\in\rW$, implies
\begin{align*}
 \|p_1-p_2\|_{0,\Omega} & \leq \dfrac{2\kappa_2\,  \wt C}{\min\{\wt\gamma,\,\kappa_1\}}\, \big(\|g\|_{0,\Omega} + \|p_{\rD}\|_{1/2,\Gamma_\rD}\big)\|\wh p_1 - \wh p_2\|_{0,\Omega}+ \dfrac{2}{c_a\,\min\{\wt\gamma,\,\kappa_1\}}\,\gamma^2\, \|\wh p_1- \wh p_2\|_{0,\Omega}\\
 & \qquad +\dfrac{2\kappa_2\,  \wt C}{\min\{\wt\gamma,\,\kappa_1\}}\gamma\Big( C_1\, (\|\bu_\rD\|_{1/2,\Gamma_\rD} + \|\fb\|_{0,\Omega}) + \dfrac{1}{c_a}\,\gamma\,r  \Big) \|\wh p_1 - \wh p_2\|_{0,\Omega}\\
&  \leq \dfrac{2}{\min\{\wt\gamma,\,\kappa_1\}}\, \bigg\{ \kappa_2\,  \wt C (1 + C_1\gamma)(\|g\|_{0,\Omega} + \|p_{\rD}\|_{1/2,\Gamma_\rD} + \|\bu_\rD\|_{1/2,\Gamma_\rD} + \|\fb\|_{0,\Omega})\\
&\qquad \qquad \qquad \qquad +\dfrac{\gamma^2}{c_a}(1+\kappa_2\,\wt C \, r)\bigg\}\|\wh p_1- \wh p_2\|_{0,\Omega}, 
\end{align*}
which together with \eqref{eq:assumption-T-2} and the Banach fixed-point theorem yields that $\bT$ has a unique fixed point in $\rW$.
Finally,  \eqref{eq:stability}  is derived analogously to the estimates in \eqref{eq:dep-R} and \eqref{eq:dep-S}, which completes the proof.
\end{proof}
\begin{remark}
The operator $\bT$ (see \eqref{eq:def-T}) could be also defined, for example $\bT:\bW\to\bW$, with $ \bW := \Big\{ (\wh\bsigma,\wh p) \in\bbH_\rN(\bdiv;\Omega)\times\rL^2(\Omega) \,:\quad \|\wh\bsigma\|_{\bdiv;\Omega} + \|\wh p\|_{0,\Omega} \leq r \Big\}$ and $\bT(\wh\bsigma,\wh p):=(R_1(\wh p),S_2(\wh \bsigma,\wh p))=(\bsigma,p)$, with $R_1$ and $S_2$ defined as in \eqref{eq:weak1} and \eqref{eq:weak2}, respectively.
\end{remark}
\section{Finite element discretisation}\label{sec:FE}
In this section, we present and analyse the Galerkin scheme for problem \eqref{eq:weak-formulation}. It is worth mentioning upfront that the well-posedness analysis can be straightforwardly extended from the continuous problem to the discrete case. Therefore, we omit many of the details.

\subsection{Finite element spaces and Galerkin scheme}\label{sec:spaces}
Let us consider a  regular partition $\mathcal T_h$ of $\bar\Omega$ made up of triangles $K$ (in $\mathbb{R}^2$) or tetrahedra $K$ (in $\mathbb{R}^3$) of diameter $h_K$, and denote the mesh size by $h := \max\{ h_K: \ K \in \mathcal T_h\}$. 
%
Given an integer $\ell \ge 0$ and $K \in \mathcal{T}_h$, we first let $\mathrm{P}_\ell(K)$ be the space of polynomials of degree $\leq \ell$ defined on $K$, whose vector and tensor versions are denoted $\bP_\ell(K) \,:=\, [\mathrm{P}_\ell(K)]^d$ and $\mathbb{P}_\ell(K)
\,=\,[\mathrm{P}_\ell(K)]^{d \times d}$, respectively. Also, we let $\mathbf{RT}_\ell(K) \,:=\, \bP_\ell(K) \oplus \mathrm{P}_\ell(K)\,\bx$ be the local Raviart--Thomas space of order $\ell$ defined on $K$, where $\bx$ stands for a generic vector in $\bbR^d$, and denote by $\mathbb{RT}_k(K)$ the tensor-valued counterpart of this space. 

For each $K\in \mathcal{T}_h$ we consider the bubble space of order $k$, defined as 
\[\mathbf{B}_k(K):=
 \begin{cases}
\revX{\mathrm{curl}(b_K)\mathrm{P}_k(K)}&\textrm{in}\quad\mathbb{R}^2,\\
\nabla \times (b_K\mathbf{P}_k(K))&\textrm{in} \quad \mathbb{R}^3,
\end{cases}\]
where $b_K$ is a suitably normalised cubic polynomial on $K$, which vanishes on the boundary of $K$ (see \cite{ernguermond}).

We recall the classical PEERS$_k$ elements (cf. \cite{arnold84}) to define the discrete subspaces for the stress tensor $\bsigma$, the displacement $\bu$, and the rotation tensor $\brho$
\begin{align}\label{fe:peers}
\bbH^{\bsigma}_{h}&:=\left\{{\btau}_h \in \bbH_\rN(\bdiv;\Omega):\quad {\btau}_{h}|_{K}\in \mathbb{RT}_k(K)\oplus[\mathbf{B}_k(K)]^d\quad \forall\, K\in \mathcal{T}_h\right\},\nonumber \\
\bH^{\bu}_{h}&:= \left\{\bv_h \in \bL^2(\Omega): \quad \bv_h|_{K}\in \textbf{P}_{k}(K)\quad \forall\, K\in \mathcal{T}_h\right\},\\
\bbH^{\brho}_{h}&:=\left\{\bbeta_h\in \bbLskew\cap \mathbb{C}(\overline{\Omega})\quad \mathrm{and}\quad \bbeta_h|_{K}\in \mathbb{P}_{k+1}(K)\quad \forall\, K\in \mathcal{T}_h\right\},\nonumber
\end{align}
\revX{where $\mathbb{C}(\overline{\Omega})$ denotes the space of continuous tensor fields}, 
and the following estimates are proven for the  PEERS$_k$ elements (cf. \cite[Remark~3.3]{LV2004})
\begin{subequations}
\begin{align}\label{eq:infsup-b-h}
\sup_{\bzero\neq\btau_h\in \bbH^{\bsigma}_{h} } \frac{b(\btau_h,(\bv_h,\bbeta_h))}{\|\btau_h\|_{\bdiv;\Omega}} & \geq \beta^*(\|\bv_h\|_{0,\Omega} +\|\bbeta_h\|_{0,\Omega})
\quad \forall\,(\bv_h,\bbeta_h)\in \bH^{\bu}_{h}\times\bbH^{\brho}_{h},\\
\label{eq:ellipticity-a-h}
\ds a(\btau_h,\btau_h) & \geq c_a \|\btau_h\|_{\bdiv;\Omega}^2 \quad \forall\,\btau_h\in\bbV_h,
\end{align}\end{subequations}
where $\bbV_h$ denotes the discrete kernel of $b$, that is
\begin{equation*}
 \ds \bbV_h   :=  \ds \Big\{\btau_h\in \bbH^{\bsigma}_{h} :\quad b(\btau_h,(\bv_h,\bbeta_h)) = 0 \quad \forall\,(\bv_h,\bbeta_h)\in \bH^{\bu}_{h}\times\bbH^{\brho}_{h} \Big\}.
\end{equation*}

Additionally, for $\bvarphi$ and the pressure $p$, we define the FE subspaces
\begin{align}\label{fe:weak-2}
\bH^{\bvarphi}_{h}&:=\left\{\bpsi_h\in \bH_\rN(\div;\Omega): \  \bpsi_h|_{K}\in \mathbf{RT}_k(K)\quad \forall\, K\in \mathcal{T}_h\right\},\nonumber\\
\rH^{p}_{h}&:= \left\{q_h \in \rL^2(\Omega): \  q_h|_{K}\in \mathrm{P}_{k}(K)\quad \forall\, K\in \mathcal{T}_h\right\},
\end{align}
and it is well known that $\wt b$ satisfies the inf-sup condition (see, e.g.,  \cite[Lemma~4.6]{cdgo2020})
\begin{equation}\label{eq:infsup-hat-b-h}
\ds \sup_{\bzero\neq \bpsi_h\in \bH^{\bvarphi}_{h}}  \frac{\wt b(\bpsi_h,q_h)}{\|\bpsi_h\|_{\div;\Omega}} \geq \wt\beta^* \|q_h\|_{0,\Omega}
\quad \forall\,q_h\in \rH^{p}_{h}. 
\end{equation}
Note that it is of course possible to consider other conforming and inf-sup stable spaces such as Arnold--Falk--Winther and Brezzi--Douglas--Marini instead of \eqref{fe:peers} and \eqref{fe:weak-2}, respectively.   
The Galerkin scheme for   \eqref{eq:weak-formulation} reads: find $(\bsigma_h,\bu_h,\brho_h,\bvarphi_h,p_h)\in  \bbH^{\bsigma}_{h}\times\bH^{\bu}_{h}\times\bbH^{\brho}_{h}\times \bH^{\bvarphi}_{h}\times\rH^{p}_{h}$, such that:
\begin{equation}\label{eq:weak-formulation-h}
\begin{array}{rllll}
\ds a(\bsigma_h,\btau_h)  & +\quad \ds b(\btau_h,(\bu_h,\brho_h))  &+\quad c(\btau_h,p_h)&= \ds H(\btau_h)&\forall\, \btau_h\in\bbH^{\bsigma}_{h},\\ [1ex]
\ds b(\bsigma_h,(\bv_h,\bbeta_h))  &  & & =\ds F(\bv_h,\bbeta_h)&\forall(\bv_h,\bbeta_h)\in\bH^{\bu}_{h}\times\bbH^{\brho}_{h} ,\\ [1ex]
\ds \wt a_{\bsigma_h,p_h}(\bvarphi_h,\bpsi_h)  & +\quad \ds \wt b(\bpsi_h,p_h)  & &= \ds \wt H(\bpsi_h)&\forall\, \bpsi_h\in\bH^{\bvarphi}_{h},\\ [1ex]
\ds \wt b(\bvarphi_h,q_h)  &-\quad\wt c(p_h,q_h) & -\quad c(\bsigma_h,q_h) & =\ds \cblue{\tilde{F}}(q_h)&\forall\, q_h\in\rH^{p}_{h}.
\end{array}
\end{equation}

\subsection{Analysis of the discrete problem}\label{sec:Fixed-point-argument-h}
In this section, we analyse the Galerkin scheme \eqref{eq:weak-formulation-h}. It's worth noting that establishing well-posedness can be readily achieved by extending the results derived for the continuous problem to the discrete setting.
Firstly, and similarly to the continuous case, we define the following set
\begin{equation*}
 \rW_h := \Big\{ \wh p_h \in\rH^{p}_{h} \,:\quad \|\wh p_h\|_{0,\Omega} \leq r \Big\}.
\end{equation*}
Next, for a fixed $\wh p_h$ in $\rW_h$, we have that the bilinear form $\wt a_{\bsigma_h,p_h}$ satisfies
\begin{equation}\label{eq:ellipticity-hat-a-h}
\ds \wt a_{\wh\bsigma_h,\wh p_h}(\bpsi_h,\bpsi_h) \geq \kappa_1 \|\bpsi_h\|_{\div;\Omega}^2 \quad\forall\,\bpsi_h\in\bV_h,
\end{equation}
where $\bV_h$ is the discrete kernel of $\wt b$
\begin{equation*}
 \ds \bV_h   :=  \ds \Big\{\bpsi_h\in \bH^{\bvarphi}_{h} :\quad \wt b(\bpsi_h,q_h) = 0 \quad \forall\,q_h\in \rH^{p}_{h} \Big\}.
\end{equation*}
Additionally, we define the discrete operators $\bR_h:\rW_h\subseteq\rH^{p}_{h}\to \bbH^{\bsigma}_{h}\times(\bH^{\bu}_{h}\times\bbH^{\brho}_{h})$ and $\bS_h: \bbH^{\bsigma}_{h}\times\rW_h \to \bH^{\bvarphi}_{h}\times\rH^{p}_{h}$, respectively, by
\begin{equation*}
\bR_h(\wh p_h):=\big(R_{1,h}(\wh p_h),(R_{2,h}(\wh p_h),R_{3,h}(\wh p_h))\big)=(\bsigma_h,(\bu_h,\brho_h)) \quad \forall\, \wh p_h\in\rW_h,
\end{equation*}
where $(\bsigma_h,(\bu_h,\brho_h))\in \bbH^{\bsigma}_{h}\times(\bH^{\bu}_{h}\times\bbH^{\brho}_{h})$ is the unique solution of 
\begin{equation}\label{eq:weak1-h}
\begin{array}{rllll}
\ds a(\bsigma_h,\btau_h)  & +\quad \ds b(\btau_h,(\bu_h,\brho_h))  &= \ds H(\btau_h) - c(\btau_h,\wh p_h)&\forall\, \btau_h\in\bbH^{\bsigma}_{h},\\ [1ex]
\ds b(\bsigma_h,(\bv_h,\bbeta_h))  &  &  =\ds F(\bv_h,\bbeta_h)&\forall\,(\bv_h,\bbeta_h)\in\bH^{\bu}_{h}\times\bbH^{\brho}_{h},
\end{array}
\end{equation}
and
\begin{equation*}
\bS_h(\wh \bsigma_h,\wh p_h):=\big(S_{1,h}(\wh \bsigma_h,\wh p_h),S_{2,h}(\wh \bsigma_h,\wh p_h) \big)=(\bvarphi_h,p_h) \quad \forall\, (\wh \bsigma_h,\wh p_h)\in\bbH^{\bsigma}_{h}\times\rW_h,
\end{equation*}
where $(\bvarphi_h,p_h)$ is the unique tuple in $\bH^{\bvarphi}_{h}\times\times\rH^{p}_{h}$ such that 
\begin{equation*}
\begin{array}{rllll}
\ds \wt a_{\wh\bsigma_h,\wh p_h}(\bvarphi_h,\bpsi_h)  & +\quad \ds \wt b(\bpsi_h,p_h)  &= \ds \wt H(\bpsi_h)&\forall\, \bpsi_h\in\bH^{\bvarphi}_{h},\\ [1ex]
\ds \wt b(\bvarphi_h,q_h)  &-\quad\wt c(p_h,q_h)  &  =\ds \cblue{\tilde{F}}(q_h) + c(\wh\bsigma_h,q_h)&\forall\, q_h\in\rH^{p}_{h}.
\end{array}
\end{equation*}

Employing properties \eqref{eq:infsup-b-h}, \eqref{eq:ellipticity-a-h}, \eqref{eq:infsup-hat-b-h}, \eqref{eq:ellipticity-hat-a-h} and \eqref{eq:ellipticity-hat-c} and proceeding exactly as for the continuous case (Lemmas~\ref{lem:well-def-R} and \ref{lem:well-def-S}), it can be
easily deduced that both operators are well-defined. Then, analogously to the continuous case, we define the following fixed-point operator 
\begin{equation}\label{eq:def-T-h}
\bT_h:\rW_h\subseteq\rH^{p}_{h}\to \rH^{p}_{h},\quad \wh p_h\mapsto \bT_h(\wh p_h):=S_{2,h}(R_{1,h}(\wh p_h),\wh p_h), 
\end{equation}
which is clearly well-defined (since $R_h$ and $S_h$ are). Further, it can be easily deduced that $\bT_h(\rW_h)\subseteq\rW_h$ if 
\begin{equation}\label{eq:assumption-T-h}
\wt C^* \big(1 +\,\gamma\,C_1^* \big)\big( 
 \|g\|_{0,\Omega}+ \|p_{\rD}\|_{1/2,\Gamma_\rD} + \|\bu_\rD\|_{1/2,\Gamma_\rD} + \|\fb\|_{0,\Omega}   \big) + \dfrac{\wt C^*}{ c_a} \,\gamma^2 r\leq r,
\end{equation}
where $\wt C^*$ and $C_1^*$ (depending on $c_a$, $\mu$, $\kappa_1$, $\kappa_2$, $C_{\wt a}$, $\beta^*$, $\wt\beta^*$) are the discrete versions of the constants $\wt C$ and $C_1$ (cf. \eqref{eq:dep-R} and \eqref{eq:dep-S}). Finally, it is clear that $(\bsigma_h,\bu_h,\brho_h,\bvarphi_h,p_h)$ is a solution to \eqref{eq:weak-formulation-h} if and only if $p_h$ satisfies 
\begin{equation}\label{eq:fixed-point-problem-h}
\bT_h(p_h)=p_h.    
\end{equation}
The main outcome of this section is presented in the following theorem, establishing the existence and uniqueness of a solution to the fixed-point problem \eqref{eq:fixed-point-problem-h},  equivalently proving the well-posedness of problem \eqref{eq:weak-formulation-h}.
\begin{theorem}\label{theorem:unique-solution-weak1-h}
Given $r>0$, assume that 
the data and $\gamma$ satisfy
\begin{equation}\label{eq:assumption-T-h-2}
\frac{2 \max\{1,\,\kappa_2\}  }{\min\{\wt\gamma,\,\kappa_1\,, r\}}\, \bigg\{  \wt C^* (1 + C_1^*\gamma)(\|g\|_{0,\Omega} + \|p_{\rD}\|_{1/2,\Gamma_\rD} + \|\bu_\rD\|_{1/2,\Gamma_\rD} + \|\fb\|_{0,\Omega})+\dfrac{\gamma^2}{c_a}\biggl(\frac{1}{\kappa_2} +\wt C^* \, r\biggr)\bigg\}   < 1.
\end{equation}
Then,  $\bT_h$ (cf. \eqref{eq:def-T-h}) has a unique fixed point $p_h\in\rW_h$. Equivalently, problem \eqref{eq:weak-formulation-h} has a unique solution $(\bsigma_h,\bu_h,\brho_h,\bvarphi_h,p_h)\in  \bbH^{\bsigma}_{h}\times\bH^{\bu}_{h}\times\bbH^{\brho}_{h}\times \bH^{\bvarphi}_{h}\times\rW_h$.
In addition, 
there exists $C^*>0$, such that
\begin{align}\label{eq:stability-h}
\nonumber & \ds \|\bsigma_h\|_{\bdiv;\Omega} + \|\bu_h\|_{0,\Omega} + \|\brho_h\|_{0,\Omega} + \|\bvarphi_h\|_{\div;\Omega} + \|p_h\|_{0,\Omega}\\
&\qquad \leq C^* (\|g\|_{0,\Omega} + \|p_{\rD}\|_{1/2,\Gamma_\rD} + \|\bu_\rD\|_{1/2,\Gamma_\rD} + \|\fb\|_{0,\Omega} + \gamma \,r\,).
\end{align}
\end{theorem}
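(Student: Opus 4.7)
The plan is to mirror the continuous argument in Theorem~\ref{theorem:unique-solution-weak1}, since the discrete spaces are conforming and all the structural ingredients — ellipticity of $a$ on the discrete kernel $\bbV_h$ \eqref{eq:ellipticity-a-h}, ellipticity of $\wt a_{\wh\bsigma_h,\wh p_h}$ on $\bV_h$ \eqref{eq:ellipticity-hat-a-h}, ellipticity of $\wt c$ \eqref{eq:ellipticity-hat-c}, and the inf-sup bounds \eqref{eq:infsup-b-h}, \eqref{eq:infsup-hat-b-h} — are already available with discrete constants $\beta^*$, $\wt\beta^*$ in place of $\beta$, $\wt\beta$. The operators $\bR_h$ and $\bS_h$ are therefore well-defined by the Babu\v{s}ka--Brezzi theory, and they satisfy the discrete versions of the a priori bounds \eqref{eq:dep-R} and \eqref{eq:dep-S} with constants $C_1^*$ and $\wt C^*$ obtained by replacing $\beta,\wt\beta$ by $\beta^*,\wt\beta^*$ in the definitions \eqref{eq:constant-C1} and \eqref{eq:Ctilde}.

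First, I would verify the ball-mapping property $\bT_h(\rW_h)\subseteq \rW_h$. Fix $\wh p_h\in \rW_h$, so $\|\wh p_h\|_{0,\Omega}\le r$. Applying $\bS_h$ to $(R_{1,h}(\wh p_h),\wh p_h)$ and using the discrete analogue of \eqref{eq:dep-S} together with the bound on $R_{1,h}(\wh p_h)$ obtained from the discrete version of \eqref{eq:dep-R}, one arrives at
\[
\|\bT_h(\wh p_h)\|_{0,\Omega} \le \wt C^*(1+\gamma C_1^*)\bigl(\|g\|_{0,\Omega}+\|p_\rD\|_{1/2,\Gamma_\rD}+\|\bu_\rD\|_{1/2,\Gamma_\rD}+\|\fb\|_{0,\Omega}\bigr) + \frac{\wt C^*}{c_a}\gamma^2 r,
\]
which is bounded by $r$ thanks to assumption \eqref{eq:assumption-T-h} (a consequence of \eqref{eq:assumption-T-h-2}). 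This shows $\bT_h$ is a self-map of $\rW_h$; note that $\rW_h$ is a closed, convex, nonempty subset of the finite-dimensional space $\rH^p_h$.

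Second, the contraction property. I would derive discrete Lipschitz estimates for $R_{1,h}$ and $S_{2,h}$ by repeating verbatim the arguments of Lemmas~\ref{lem:cotaR1} and \ref{lem:cotaS-1-2}, using test functions in the discrete spaces and invoking \eqref{eq:ellipticity-a-h}, \eqref{eq:ellipticity-hat-a-h}, \eqref{eq:ellipticity-hat-c} in place of their continuous counterparts. This yields
\[
\|R_{1,h}(\wh p_{1,h})-R_{1,h}(\wh p_{2,h})\|_{\bdiv;\Omega}\le \frac{\gamma}{c_a}\|\wh p_{1,h}-\wh p_{2,h}\|_{0,\Omega},
\]
and the analogue of \eqref{eq:Lipschitz-continuity-S-1-2} with $\wt C$ replaced by $\wt C^*$. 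Composing these two estimates exactly as in the proof of Theorem~\ref{theorem:unique-solution-weak1}, and bounding $\|R_{1,h}(\wh p_{2,h})\|_{\bdiv;\Omega}$ via the discrete version of \eqref{eq:dep-R} using $\wh p_{2,h}\in\rW_h$, one obtains
\[
\|\bT_h(\wh p_{1,h})-\bT_h(\wh p_{2,h})\|_{0,\Omega}\le L\,\|\wh p_{1,h}-\wh p_{2,h}\|_{0,\Omega},
\]
where $L$ coincides with the left-hand side of \eqref{eq:assumption-T-h-2}. Hypothesis \eqref{eq:assumption-T-h-2} gives $L<1$, so Banach's fixed-point theorem delivers a unique $p_h\in\rW_h$ with $\bT_h(p_h)=p_h$, equivalently the unique solvability of \eqref{eq:weak-formulation-h}. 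The stability bound \eqref{eq:stability-h} then follows by chaining the discrete versions of \eqref{eq:dep-R} and \eqref{eq:dep-S} with $\wh p=p_h\in\rW_h$ and absorbing the $\gamma r$ term into the right-hand side as in the continuous case.

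The only step that demands a little care is tracking that the discrete constants $C_1^*$, $\wt C^*$ and $c_a$ (the latter is the \emph{same} as in the continuous case by \eqref{eq:ellipticity-a-h}) inherit the $\lambda$-robustness alluded to after \eqref{eq:ellipticity-hat-c}, so that the contraction constant stays below one in the relevant asymptotic regimes. Everything else is a direct transcription of the continuous argument into the conforming discrete setting, which is why I would not reproduce the computations.
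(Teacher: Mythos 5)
Your proposal is correct and follows essentially the same approach as the paper: replicate the continuous Lipschitz estimates for $R_{1,h}$ and $S_{2,h}$ using the discrete ellipticity and inf-sup properties, compose them to show $\bT_h$ is a contraction on $\rW_h$ under hypothesis \eqref{eq:assumption-T-h-2}, and invoke the Banach fixed-point theorem. The only cosmetic difference is that you additionally remark on the completeness of $\rW_h$ as a closed subset of a finite-dimensional space, which the paper leaves implicit.
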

\begin{proof}
First, we observe that, similar to the continuous case (as seen in the proof of Theorem~\ref{theorem:unique-solution-weak1}), assumption \eqref{eq:assumption-T-h-2} ensures the well-definedness of $\bT_h$ and that $\bT_h(\rW_h)\subseteq\rW_h$. Now, by adapting the arguments used in Section~\ref{sec:wellp} (cf.  Lemmas~\ref{lem:cotaR1} and \ref{lem:cotaS-1-2}), one can derive the following estimates
\begin{align*}
\|R_{1,h}(\wh p_1) - R_{1,h}(\wh p_2)\|_{\bdiv;\Omega} 
&\leq \dfrac{1}{c_a} \,\gamma\, \|\wh p_1- \wh p_2\|_{0,\Omega},\\
\|S_{2,h}(\wh\bsigma_1,\,\wh p_1) - S_{2,h}(\wh\bsigma_2,\,\wh p_2)\|_{0,\Omega} & \leq \frac{2\kappa_2\,  \wt C^*}{\min\{\wt\gamma,\,\kappa_1\}}\, \big(\|g\|_{0,\Omega} + \|p_{\rD}\|_{1/2,\Gamma_\rD} + \gamma\|\wh\bsigma_2\|_{\bdiv;\Omega} \big) \|\wh p_1 - \wh p_2\|_{0,\Omega} \\
&\quad + \frac{2}{\min\{\wt\gamma,\,\kappa_1\}}\,\gamma\,\|\wh\bsigma_1-\wh\bsigma_2\|_{\bdiv;\Omega},
\end{align*}
for all $\wh p_1,\, \wh p_2\in\rW_h$ and $\wh\bsigma_1,\,\wh\bsigma_2 \in \bbH^{\bsigma}_{h}$, which together with the definition of $\bT_h$ (see \eqref{eq:def-T-h}), yield
\begin{align*}
 \|\bT_h(\wh p_1)-\bT_h(\wh p_2)\|_{0,\Omega} &\leq \dfrac{2}{\min\{\wt\gamma,\,\kappa_1\}} \bigg\{ \kappa_2\,  \wt C^* (1 + C_1^*\gamma)(\|g\|_{0,\Omega} + \|p_{\rD}\|_{1/2,\Gamma_\rD} + \|\bu_\rD\|_{1/2,\Gamma_\rD} + \|\fb\|_{0,\Omega})\\
& \qquad \qquad \qquad\quad + \dfrac{\gamma^2}{c_a}(1+\kappa_2\,\wt C^* \, r)\bigg\}\|\wh p_1- \wh p_2\|_{0,\Omega}, 
\end{align*}
for all $\wh p_1,\, \wh p_2\in\rW_h$. In this way, using estimate \eqref{eq:assumption-T-h-2} we obtain that $\bT_h$ is a contraction mapping on $\rW_h$, thus problem \eqref{eq:fixed-point-problem-h}, or equivalently \eqref{eq:weak-formulation-h} is well-posed.
Finally, analogously to the proof of Theorem ~\ref{theorem:unique-solution-weak1} (see also Lemmas~\ref{lem:cotaR1} and \ref{lem:cotaS-1-2}) we can obtain \eqref{eq:stability-h}, which concludes the proof.
\end{proof}

\section{A priori error estimates}\label{sec:error}

In this section, we aim to provide the convergence of the Galerkin scheme \eqref{eq:weak-formulation-h} and derive the corresponding rate of convergence. From now on we assume that the hypotheses of Theorem~\ref{theorem:unique-solution-weak1} and Theorem~\ref{theorem:unique-solution-weak1-h} hold.
\subsection{Preliminaries}\label{sec:preliminaries}

Let the tuples $(\bsigma,\bu,\brho,\bvarphi,p)\in \bbH_\rN(\bdiv;\Omega)\times\bL^2(\Omega)\times\bbLskew\times \bH_\rN(\div;\Omega)\times\rL^2(\Omega)$ and $(\bsigma_h,\bu_h,\brho_h,\bvarphi_h,p_h)\in  \bbH^{\bsigma}_{h}\times\bH^{\bu}_{h}\times\bbH^{\brho}_{h}\times \bH^{\bvarphi}_{h}\times\rH^{p}_{h}$ be the unique solutions of \eqref{eq:weak1} and \eqref{eq:weak1-h}, respectively.

Let us write $\texttt{e}_{\bsigma} = \bsigma - \bsigma_h$, $\texttt{e}_{\bu} = \bu - \bu_h$, $\texttt{e}_{\brho} = \brho - \brho_h$, $\texttt{e}_{\bvarphi} = \bvarphi - \bvarphi_h$ and $\texttt{e}_{p} = p - p_h$.
As usual, for a given $(\wh{\btau}_h,(\wh{\bv}_h,\wh{\bbeta}_h) )\in\bbH^{\bsigma}_{h}\times(\bH^{\bu}_{h}\times\bbH^{\brho}_{h})$ and $(\wh{\bpsi}_h,\wh{q}_h)\in \bH^{\bvarphi}_{h}\times\rH^{p}_{h}$, we shall then decompose these errors into
\begin{equation}\label{eq:decompositions}
\texttt{e}_{\bsigma} = \bxi_{\bsigma} + \bchi_{\bsigma},\qquad
\texttt{e}_{\bu} = \bxi_{\bu} + \bchi_{\bu},\qquad
\texttt{e}_{\brho} = \bxi_{\brho} + \bchi_{\brho},\qquad
\texttt{e}_{\bvarphi} = \bxi_{\bvarphi} + \bchi_{\bvarphi},\qquad
\texttt{e}_{p} = \xi_{p} + \chi_{p},
\end{equation}
with
$\bxi_{\bsigma} := \bsigma - \wh{\btau}_h$,
$\bchi_{\bsigma} := \wh{\btau}_h - \bsigma_h$,  
$\bxi_{\bu} := \bu - \wh{\bv}_h,$ 
$\bchi_{\bu} := \wh{\bv}_h - \bu_h$, 
$\bxi_{\brho} := \brho - \wh{\bbeta}_h$,  
$\bchi_{\brho} := \wh{\bbeta}_h - \brho_h$, 
$\bxi_{\bvarphi} := \bvarphi - \wh{\bpsi}_h$,  
$\bchi_{\bvarphi} := \wh{\bpsi}_h - \bvarphi_h$, 
$\xi_{p} := p - \wh{q}_h$, and  
$\chi_{p} := \wh{q}_h - p_h$.

Considering the first two equations of  problems \eqref{eq:weak-formulation} and \eqref{eq:weak-formulation-h}, the following identities hold
\begin{equation*}
\begin{array}{rllll}
\ds a(\bsigma,\btau)  & +\quad \ds b(\btau,(\bu,\brho)) &= \ds H(\btau) - c(\btau,p)&\forall\, \btau\in\bbH_\rN(\bdiv;\Omega),\\ [1ex]
\ds b(\bsigma,(\bv,\bbeta))  &  & =\ds F(\bv,\bbeta)&\forall\,\bv\in\bL^2(\Omega),\, \forall\,\bbeta\in\bbLskew,
\end{array}
\end{equation*}
and
\begin{equation*}
\begin{array}{rllll}
\ds a(\bsigma_h,\btau_h)  & +\quad \ds b(\btau_h,(\bu_h,\brho_h))  &= \ds H(\btau_h) - c(\btau_h,p_h)&\forall\, \btau_h\in\bbH^{\bsigma}_{h},\\ [1ex]
\ds b(\bsigma_h,(\bv_h,\bbeta_h))  &  & =\ds F(\bv_h,\bbeta_h)&\forall(\bv_h,\bbeta_h)\in\bH^{\bu}_{h}\times\bbH^{\brho}_{h}.
\end{array}
\end{equation*}
From these relations we can obtain that for all $(\btau_h,(\bv_h,\brho_h))\in \bbH^{\bsigma}_{h}\times(\bH^{\bu}_{h}\times\bbH^{\brho}_{h})$, there holds
\begin{equation*}
\begin{array}{rllll}
\ds a(\texttt{e}_{\bsigma},\btau_h)  & +\quad \ds b(\btau_h,(\texttt{e}_{\bu},\texttt{e}_{\brho}))  &= \ds - c(\btau_h,\texttt{e}_{p}),\\ [1ex]
\ds b(\texttt{e}_{\bsigma},(\bv_h,\bbeta_h))  &  & =\ds 0,
\end{array}
\end{equation*}
which together with the definition of the errors in \eqref{eq:decompositions}, implies that 
\begin{equation}\label{eq:auxiliar-equation-1}
\begin{array}{ll}
\ds a(\bchi_{\bsigma},\btau_h) + b(\btau_h,(\bchi_{\bu},\bchi_{\brho})) + b(\bchi_{\bsigma},(\bv_h,\bbeta_h))\\
\ds\qquad = -a(\bxi_{\bsigma},\btau_h) - b(\btau_h,(\bxi_{\bu},\bxi_{\brho})) - b(\bxi_{\bsigma},(\bv_h,\bbeta_h)) - c(\btau_h,\bchi_{p}) - c(\btau_h,\bxi_{p}),
\end{array}
\end{equation}
for all $(\btau_h,(\bv_h,\brho_h))\in \bbH^{\bsigma}_{h}\times(\bH^{\bu}_{h}\times\bbH^{\brho}_{h})$.

Next, considering the last two equations of both problems \eqref{eq:weak-formulation} and \eqref{eq:weak-formulation-h}, we obtain
\begin{equation*}
\begin{array}{rllll}
\ds \wt a_{\bsigma,p}(\bvarphi,\bpsi)  & +\quad \ds \wt b(\bpsi,p)  &= \ds \wt H(\bpsi)&\forall\, \bpsi\in\bH_\rN(\div;\Omega),\\ [1ex]
\ds \wt b(\bvarphi,q)  &-\quad\wt c(p,q) &  =\ds \cblue{\tilde{F}}(q) + c(\bsigma,q)&\forall\, q\in\rL^2(\Omega).
\end{array}
\end{equation*}
and
\begin{equation*}
\begin{array}{rllll}
\ds \wt a_{\bsigma_h,p_h}(\bvarphi_h,\bpsi_h)  & +\quad \ds \wt b(\bpsi_h,p_h)  &= \ds \wt H(\bpsi_h)&\forall\, \bpsi_h\in\bH^{\bvarphi}_{h},\\ [1ex]
\ds \wt b(\bvarphi_h,q_h)  &-\quad\wt c(p_h,q_h) & =\ds \cblue{\tilde{F}}(q_h) + c(\bsigma_h,q_h)&\forall\, q_h\in\rH^{p}_{h}.
\end{array}
\end{equation*}
Then, using arguments similar to those in Lemma~\ref{lem:cotaS-1-2}, by adding  $\pm \wt a_{\bsigma_h, p_h}(\bvarphi,\bpsi_h)$, we have
\begin{align*}
& 
\ds \wt a_{\bsigma_h,p_h}(\texttt{e}_{\bvarphi_h},\bpsi_h) + \wt b(\bpsi_h,\texttt{e}_{p_h}) + \wt b(\texttt{e}_{\bvarphi_h},q_h) -\wt c(\texttt{e}_{p_h},q_h)\\
&\qquad = - \int_{\Omega} \Big(\bkappa(\bsigma,p)^{-1}-\bkappa(\bsigma_h,p_h)^{-1}\Big)\, \bvarphi \cdot \bpsi_h + c(\texttt{e}_{\bsigma},q_h),
\end{align*}
which together with \eqref{eq:decompositions}, implies that
\begin{equation}\label{eq:auxiliar-equation-2}
\begin{array}{lllll}
\ds \wt a_{\bsigma_h,p_h}(\bchi_{\bvarphi},\bpsi_h) + \wt b(\bpsi_h,\chi_{p_h}) + \wt b(\bchi_{\bvarphi},q_h) -\wt c(\chi_{p},q_h)+ \wt a_{\bsigma_h,p_h}(\bxi_{\bvarphi},\bpsi_h)\\
\ds \qquad =- \wt b(\bpsi_h,\xi_{p}) - \wt b(\bxi_{\bvarphi},q_h) +\wt c(\xi_{p},q_h) - \int_{\Omega} \Big(\bkappa(\bsigma,p)^{-1}-\bkappa(\bsigma_h,p_h)^{-1}\Big)\, \bvarphi \cdot \bpsi_h + c(\texttt{e}_{\bsigma},q_h).
\end{array}
\end{equation}
\subsection{Derivation of C\'ea estimates}
\begin{lemma}\label{lem:CEA-1}
There exist $C_3^*,\, C_4^*>0$, independent of $h$, such that
\begin{equation}\label{eq:bound-chi-1}
\|\bchi_{\bsigma}\|_{\bdiv;\Omega} + \|\bchi_{\bu}\|_{0,\Omega} + \|\bchi_{\brho}\|_{0,\Omega} 
 \leq C_3^* (\|\bxi_{\bsigma}\|_{\bdiv;\Omega} + \|\bxi_{\bu}\|_{0,\Omega} + \|\bxi_{\brho}\|_{0,\Omega}  + \|\bxi_{p}\|_{0,\Omega}  ) + C_4^* \gamma \|\chi_{p}\|_{0,\Omega}.
\end{equation}    
\end{lemma}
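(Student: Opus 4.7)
My plan is to read the error identity \eqref{eq:auxiliar-equation-1} as a \emph{discrete perturbed saddle-point problem} for the unknowns $(\bchi_{\bsigma},(\bchi_{\bu},\bchi_{\brho}))\in \bbH^{\bsigma}_{h}\times(\bH^{\bu}_{h}\times\bbH^{\brho}_{h})$, whose right-hand side is a bounded linear functional depending on the projection errors $\bxi_{\bsigma},\bxi_{\bu},\bxi_{\brho},\bxi_{p}$ and on the discrete pressure error $\chi_{p}$. Indeed, \eqref{eq:auxiliar-equation-1} has exactly the structure of the Galerkin problem \eqref{eq:weak1-h} with modified data, and the sought bound is then a straightforward stability estimate for this discrete system.

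To make this precise, I would first invoke the discrete Babu\v ska--Brezzi theory using the ellipticity of $a$ on the discrete kernel $\bbV_h$ (cf.\ \eqref{eq:ellipticity-a-h}) and the discrete inf-sup condition for $b$ (cf.\ \eqref{eq:infsup-b-h}), exactly as in the proof of Lemma~\ref{lem:well-def-R} but at the discrete level. This yields a stability bound of the form
\[
\|\bchi_{\bsigma}\|_{\bdiv;\Omega} + \|\bchi_{\bu}\|_{0,\Omega} + \|\bchi_{\brho}\|_{0,\Omega}
\;\leq\; M\,\bigl(\|\mathcal{F}_1\|_{\bbH^{\bsigma}_{h}{}'} + \|\mathcal{F}_2\|_{(\bH^{\bu}_{h}\times\bbH^{\brho}_{h})'}\bigr),
\]
where $M>0$ depends only on $c_a,\,\beta^{*},\,\mu$ (hence is independent of $h$ and of $\lambda$), and $\mathcal{F}_1,\mathcal{F}_2$ collect the right-hand side of \eqref{eq:auxiliar-equation-1}, namely
\[
\mathcal{F}_1(\btau_h) \;=\; -a(\bxi_{\bsigma},\btau_h) - b(\btau_h,(\bxi_{\bu},\bxi_{\brho})) - c(\btau_h,\chi_{p}) - c(\btau_h,\xi_{p}),
\qquad
\mathcal{F}_2(\bv_h,\bbeta_h) \;=\; -b(\bxi_{\bsigma},(\bv_h,\bbeta_h)).
\]

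Next I would bound each term in $\mathcal{F}_1$ and $\mathcal{F}_2$ using the continuity estimates \eqref{eq:bounded-a-b}--\eqref{eq:bounded-c}: the $a$-term gives a factor $\tfrac{1}{\mu}\|\bxi_{\bsigma}\|_{\bdiv;\Omega}$, the $b$-terms give $\|\bxi_{\bsigma}\|_{\bdiv;\Omega}$ and $\|\bxi_{\bu}\|_{0,\Omega}+\|\bxi_{\brho}\|_{0,\Omega}$, and the two $c$-terms produce $\gamma\|\xi_p\|_{0,\Omega}$ and $\gamma\|\chi_p\|_{0,\Omega}$ respectively. Collecting everything yields \eqref{eq:bound-chi-1}, with $C_3^{*}$ absorbing $M$, $\mu$, and the uniformly bounded factor $\gamma$ (cf.\ \eqref{eq:def-gamma} and the remark following it) on the $\xi_p$-contribution, while $C_4^{*}$ absorbs only $M$ so that $\gamma$ is kept \emph{explicit} on $\|\chi_p\|_{0,\Omega}$. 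This explicit retention is not merely cosmetic: later in the C\'ea argument the term $C_4^{*}\gamma\|\chi_p\|_{0,\Omega}$ must be combined with the analogous fluid-block estimate and absorbed into the left-hand side under the small-data assumption \eqref{eq:assumption-T-h-2}. The only mild technical point is to verify that the constant $M$ from the discrete Babu\v ska--Brezzi bound is indeed robust in $\lambda$; this follows because $a$ involves $\C^{-1}$ (whose ellipticity on $\bbV_h$ comes with the $\lambda$-independent constant $c_a$), and all other constants in the stability bound depend only on $\beta^*$ and $c_a$.
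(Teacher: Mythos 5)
Your proposal is correct and follows essentially the same route as the paper: both invoke the discrete global inf-sup/Babu\v{s}ka--Brezzi stability (using \eqref{eq:ellipticity-a-h}, \eqref{eq:infsup-b-h}, and \cite[Proposition 2.36]{ernguermond}), feed in the error identity \eqref{eq:auxiliar-equation-1}, and close with the continuity bounds \eqref{eq:bounded-a-b}--\eqref{eq:bounded-c}. Your phrasing via dual norms of $\mathcal{F}_1,\mathcal{F}_2$ is just a reformulation of the sup appearing in the paper, and your bookkeeping of which factors go into $C_3^*$ versus keeping $\gamma$ explicit on $\|\chi_p\|_{0,\Omega}$ matches the paper's choice $C_3^*:=(C_1^*+C_2^*)(\tfrac{1}{\mu}+1+\gamma)$, $C_4^*:=C_1^*+C_2^*$.
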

\begin{proof}
From the properties of $a$ and $b$ (refer to \eqref{eq:ellipticity-a-h} and \eqref{eq:infsup-b-h}), and \cite[Proposition 2.36]{ernguermond}, we derive the following discrete global inf-sup condition
\begin{align*}
& \|\bchi_{\bsigma}\|_{\bdiv;\Omega} + \|\bchi_{\bu}\|_{0,\Omega} + \|\bchi_{\brho}\|_{0,\Omega}\\
& \quad \leq (C_1^* +C_2^*) \sup_{\bzero\neq(\btau_h,\bv_h,\bpsi_h)\in \bbH^{\bsigma}_{h}\times\bH^{\bu}_{h}\times\bbH^{\brho}_{h} } \!\!\!\frac{a(\bchi_{\bsigma},\btau_h) + b(\btau_h,(\bchi_{\bu},\bchi_{\brho})) + b(\bchi_{\bsigma},(\bv_h,\bbeta_h))}{\|\btau_h\|_{\bdiv;\Omega} + \|\bv_h\|_{0,\Omega} + \|\bbeta_h\|_{0,\Omega}},
\end{align*}
\revX{where $C_1^*,\,C_2^*>0$ independent of $h$ are} the discrete version of the constants $C_1,\,C_2$ defined in \eqref{eq:constant-C1}. Then, combining the last inequality with \eqref{eq:auxiliar-equation-1}, and the continuity properties of $a$ and $b$ (see \eqref{eq:bounded-a-b}), 
we obtain
\begin{equation*}
\begin{array}{ll}
\|\bchi_{\bsigma}\|_{\bdiv;\Omega} + \|\bchi_{\bu}\|_{0,\Omega} + \|\bchi_{\brho}\|_{0,\Omega}\\
\ds \qquad \leq (C_1^* + C_2^* )(\dfrac{1}{\mu}\|\bxi_{\bsigma}\|_{\bdiv;\Omega} + \|\bxi_{\bu}\|_{0,\Omega} + \|\bxi_{\brho}\|_{0,\Omega} + \|\bxi_{\bsigma}\|_{\bdiv;\Omega} + \gamma \|\chi_{p}\|_{0,\Omega} + \gamma \|\xi_{p}\|_{0,\Omega}  ),
\end{array}
\end{equation*}
which implies \eqref{eq:bound-chi-1} with $C_3^*:=(C_1^* + C_2^* )(\frac{1}{\mu}+1+\gamma)$ and $C_4^*:=C_1^* + C_2^* $. \revX{Note also, that the error estimate is robust with respect to $\lambda$.}
\end{proof}

\begin{lemma}\label{lem:CEA-2}
There exist $\wt C_5^*,\, \wt C_6^*>0$, independent of $h$, such that
\begin{equation}\label{eq:bound-chi-2}
\begin{array}{ll}
\ds \|\bchi_{\bvarphi}\|_{\div;\Omega} + \|\bchi_{p}\|_{0,\Omega} \leq \wt C^*_5\,(\|\bxi_{\bvarphi}\|_{\div;\Omega} + \|\xi_{p}\|_{0,\Omega} + \|\bxi_{\bsigma}\|_{\bdiv;\Omega})\\[1ex]
\qquad + \wt C^*_6 \big( (\|g\|_{0,\Omega} + \|p_{\rD}\|_{1/2,\Gamma_\rD} + \|\bu_\rD\|_{1/2,\Gamma_\rD} + \|\fb\|_{0,\Omega} + \gamma \,r\,) \|\chi_{p}\|_{0,\Omega} + \gamma\|\bchi_{\bsigma}\|_{\bdiv;\Omega} \big).
\end{array}
\end{equation}    
\end{lemma}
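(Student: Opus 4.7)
The plan is to mirror the strategy of Lemma~\ref{lem:CEA-1} but for the fluid sub-block, exploiting the perturbed saddle-point structure of the $(\wt a, \wt b, \wt c)$ system. First, I would note that because $\wt a_{\bsigma_h,p_h}(\cdot,\cdot)$ is coercive on the discrete kernel $\bV_h$ (by \eqref{eq:ellipticity-hat-a-h}), $\wt b$ satisfies the discrete inf-sup condition \eqref{eq:infsup-hat-b-h}, and $\wt c$ is elliptic on $\rL^2(\Omega)$ (by \eqref{eq:ellipticity-hat-c}), the Brezzi theory for perturbed saddle-point problems (an analogue of \cite[Proposition 2.36]{ernguermond} adapted to the perturbation $-\wt c$, as invoked in the proof of Lemma~\ref{lem:well-def-S}) provides a global discrete inf-sup of the form
\begin{equation*}
\|\bchi_{\bvarphi}\|_{\div;\Omega} + \|\chi_{p}\|_{0,\Omega}
\,\leq\, M\,\sup_{\bzero\neq(\bpsi_h,q_h)\in \bH^{\bvarphi}_{h}\times\rH^{p}_{h}}
\frac{\wt a_{\bsigma_h,p_h}(\bchi_{\bvarphi},\bpsi_h) + \wt b(\bpsi_h,\chi_p) + \wt b(\bchi_{\bvarphi},q_h) - \wt c(\chi_p,q_h)}{\|\bpsi_h\|_{\div;\Omega} + \|q_h\|_{0,\Omega}},
\end{equation*}
with $M>0$ independent of $h$ (depending on $\kappa_1$, $\wt\beta^*$, $C_{\wt a}$, $\wt\gamma$).

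Next, I would substitute the error identity \eqref{eq:auxiliar-equation-2} into the numerator. This replaces the supremum by six terms: $-\wt a_{\bsigma_h,p_h}(\bxi_{\bvarphi},\bpsi_h)$, $-\wt b(\bpsi_h,\xi_p)$, $-\wt b(\bxi_{\bvarphi},q_h)$, $\wt c(\xi_p,q_h)$, the permeability-difference integral, and $c(\texttt{e}_{\bsigma},q_h)$. The first four are bounded immediately by the continuity constants of $\wt a$, $\wt b$, $\wt c$ (from \eqref{eq:bound-a}, \eqref{eq:bounded-hat-b-c}), contributing terms in $\|\bxi_{\bvarphi}\|_{\div;\Omega}$ and $\|\xi_p\|_{0,\Omega}$. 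For the last term, I write $\texttt{e}_{\bsigma} = \bxi_{\bsigma} + \bchi_{\bsigma}$ and use \eqref{eq:bounded-c}, producing the contributions $\gamma\|\bxi_{\bsigma}\|_{\bdiv;\Omega}$ (which goes into the $\wt C_5^*$ group) and $\gamma\|\bchi_{\bsigma}\|_{\bdiv;\Omega}$ (which is the second part of the $\wt C_6^*$ group).

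The key step -- and the main obstacle -- is the nonlinear permeability term. Using the Lipschitz property \eqref{prop-kappa} and Cauchy--Schwarz we estimate
\begin{equation*}
\Bigl|\int_\Omega (\bkappa(\bsigma,p)^{-1}-\bkappa(\bsigma_h,p_h)^{-1})\,\bvarphi\cdot\bpsi_h\Bigr|
\,\leq\, \kappa_2\,\|p-p_h\|_{0,\Omega}\,\|\bvarphi\|_{0,\Omega}\,\|\bpsi_h\|_{0,\Omega}.
\end{equation*}
I would then invoke the continuous stability bound \eqref{eq:stability} from Theorem~\ref{theorem:unique-solution-weak1} to obtain
\begin{equation*}
\|\bvarphi\|_{0,\Omega}\,\leq\, C\bigl(\|g\|_{0,\Omega} + \|p_{\rD}\|_{1/2,\Gamma_\rD} + \|\bu_\rD\|_{1/2,\Gamma_\rD} + \|\fb\|_{0,\Omega} + \gamma\,r\bigr),
\end{equation*}
and split $\|p-p_h\|_{0,\Omega}\leq \|\xi_p\|_{0,\Omega} + \|\chi_p\|_{0,\Omega}$. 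The $\|\chi_p\|_{0,\Omega}$ branch, multiplied by the data-dependent bound on $\|\bvarphi\|_{0,\Omega}$, yields exactly the $\wt C_6^*\,(\text{data}+\gamma r)\|\chi_p\|_{0,\Omega}$ contribution; the $\|\xi_p\|_{0,\Omega}$ branch is absorbed into $\wt C_5^*\|\xi_p\|_{0,\Omega}$ (with $\wt C_5^*$ allowed to depend on the data norms, which are bounded in view of the small-data hypothesis \eqref{eq:assumption-T-h-2}).

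Finally, dividing by $\|\bpsi_h\|_{\div;\Omega}+\|q_h\|_{0,\Omega}$, taking the supremum, and collecting terms gives \eqref{eq:bound-chi-2} with the claimed constants $\wt C_5^*$ and $\wt C_6^*$ independent of $h$. The main delicate point, as anticipated, is that the nonlinear permeability forces the stability bound of $\bvarphi$ into the constant in front of $\|\chi_p\|_{0,\Omega}$; later, when combining Lemmas~\ref{lem:CEA-1} and \ref{lem:CEA-2}, this is precisely the term that the small-data assumption must control in order to absorb $\|\chi_p\|_{0,\Omega}$ and $\|\bchi_{\bsigma}\|_{\bdiv;\Omega}$ on the left-hand side and close the C\'ea estimate.
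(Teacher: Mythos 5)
Your proof is correct and follows essentially the same route as the paper: starting from the global discrete inf-sup for the perturbed saddle-point system $(\wt a, \wt b, \wt c)$ (the paper cites \cite[Th.~3.4]{cg2022}, which is exactly the result your ``analogue of Proposition~2.36 adapted to the perturbation $-\wt c$'' refers to), then substituting \eqref{eq:auxiliar-equation-2}, bounding the permeability term via Lipschitz continuity and Cauchy--Schwarz, using the continuous stability bound \eqref{eq:stability} for $\|\bvarphi\|$, and splitting $\texttt{e}_{\bsigma}$ and $\texttt{e}_p$ into $\bxi$- and $\bchi$-parts to separate what goes into $\wt C_5^*$ from what goes into $\wt C_6^*$. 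No genuine difference in approach.
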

\begin{proof}
Similarly to Lemma~\ref{lem:CEA-1}, using the properties of $\wt a$, $\wt b$ and $\wt c$ (refer to \eqref{eq:ellipticity-hat-a-h}, \eqref{eq:infsup-hat-b-h} and \eqref{eq:ellipticity-hat-c}), and \cite[Th.~3.4]{cg2022}, we derive the following discrete global inf-sup condition
\begin{equation*}
\|\bchi_{\bvarphi}\|_{\div;\Omega} + \|\chi_{p}\|_{0,\Omega} 
\leq 2\wt C^* \sup_{\bzero\neq(\bpsi_h,q_h)\in \bH^{\bvarphi}_{h}\times\rH^{p}_{h} } \frac{\wt a_{\bsigma_h,p_h}(\bchi_{\bvarphi},\bpsi_h) + b(\bpsi_h,\chi_{p}) + b(\bchi_{\bvarphi},q_h) - \wt c(\chi_p,q_h)}{\|\bpsi_h\|_{\div;\Omega} + \|q_h\|_{0,\Omega}},
\end{equation*}
with $\wt C^*$ defined as in \eqref{eq:assumption-T-h}. Then, using the equation \eqref{eq:auxiliar-equation-2}, the bound properties of $\wt a$, $\wt b$ and $\wt c$ (see \eqref{eq:bound-a} and \eqref{eq:bounded-hat-b-c}), and the second assumption for $\bkappa$ (cf. \eqref{prop-kappa}), we obtain
\begin{align*}
&\|\bchi_{\bvarphi}\|_{\div;\Omega} + \|\chi_{p}\|_{0,\Omega} \\
& \leq 2\wt C^* (C_{\wt a} \|\bxi_{\bvarphi}\|_{\div;\Omega} + \|\xi_{p}\|_{0,\Omega} + \|\bxi_{\bvarphi}\|_{\div;\Omega} + \wt\gamma \|\xi_{p}\|_{0,\Omega} \\
&\qquad \quad + \|\bkappa(\bsigma,p)^{-1} - \bkappa(\bsigma_h,p_h)^{-1}\|_{\bbL^\infty(\Omega)}\|\bvarphi\|_{\div;\Omega} + \gamma\|\texttt{e}_{\bsigma}\|_{\bdiv;\Omega})\\
& \leq 2\wt C^* (C_{\wt a} \|\bxi_{\bvarphi}\|_{\div;\Omega} + \|\xi_{p}\|_{0,\Omega} + \|\bxi_{\bvarphi}\|_{\div;\Omega} + \wt\gamma \|\xi_{p}\|_{0,\Omega} + \kappa_2\|\texttt{e}_{p}\|_{0,\Omega}\|\bvarphi\|_{\div;\Omega} + \gamma\|\texttt{e}_{\bsigma}\|_{\bdiv;\Omega}),
\end{align*}
hence, using the fact that $\bvarphi$ satisfies \eqref{eq:stability} and the error decomposition \eqref{eq:decompositions}, we have 
\begin{align*}
\|\bchi_{\bvarphi}\|_{\div;\Omega} + \|\chi_{p}\|_{0,\Omega} &  \leq 2\wt C^* (C_{\wt a} \|\bxi_{\bvarphi}\|_{\div;\Omega} + \|\xi_{p}\|_{0,\Omega} + \|\bxi_{\bvarphi}\|_{\div;\Omega} + \wt\gamma \|\xi_{p}\|_{0,\Omega} + \kappa_2\|\xi_{p}\|_{0,\Omega}\|\bvarphi\|_{\div;\Omega}   \\
& \qquad \qquad + \gamma\|\bxi_{\bsigma}\|_{\bdiv;\Omega}) + 2\wt C^* ( \kappa_2\|\chi_{p}\|_{0,\Omega}\|\bvarphi\|_{\div;\Omega} + \gamma\|\bchi_{\bsigma}\|_{\bdiv;\Omega})\\
& \leq \wt C^*_5\,(\|\bxi_{\bvarphi}\|_{\div;\Omega} + \|\xi_{p}\|_{0,\Omega} + \|\bxi_{\bsigma}\|_{\bdiv;\Omega})  \\
&\qquad + 2\wt C^* \big( \kappa_2 C (\|g\|_{0,\Omega} + \|p_{\rD}\|_{1/2,\Gamma_\rD} + \|\bu_\rD\|_{1/2,\Gamma_\rD} + \|\fb\|_{0,\Omega} + \gamma \,r\,) \|\chi_{p}\|_{0,\Omega} \\
&\qquad \qquad + \gamma\|\bchi_{\bsigma}\|_{\bdiv;\Omega} \big),
\end{align*}
the last equation implies \eqref{eq:bound-chi-2}, with $\wt C^*_5 := 2\wt C^* \big( C_{\wt a}+1 +\wt\gamma +\gamma + \kappa_2 C (\|g\|_{0,\Omega} + \|p_{\rD}\|_{1/2,\Gamma_\rD} + \|\bu_\rD\|_{1/2,\Gamma_\rD} + \|\fb\|_{0,\Omega} + \gamma \,r\,)     \big)$ and $\wt C^*_6 := 2\wt C^*(\kappa_2 C + 1)$, and concludes the proof.
\end{proof}

\begin{theorem}\label{th:cea-estimate}
Assume that
\begin{equation}\label{eq:assumption-cea}
\ds (C_4^* + \wt C^*_6 + \wt C^*_6 r) \gamma + \wt C^*_6 (\|g\|_{0,\Omega} + \|p_{\rD}\|_{1/2,\Gamma_\rD} + \|\bu_\rD\|_{1/2,\Gamma_\rD} + \|\fb\|_{0,\Omega} )  \leq \dfrac{1}{2},
\end{equation}
with $C_4^*$ and $\wt C^*_6$ being the constants in Lemmas \ref{lem:CEA-1} and \ref{lem:CEA-2}. Furthermore, assume that the hypotheses of Theorem~\ref{theorem:unique-solution-weak1} and Theorem~\ref{theorem:unique-solution-weak1-h} hold. Let $(\bsigma,\bu,\brho,\bvarphi,p)\in \bbH_\rN(\bdiv;\Omega)\times\bL^2(\Omega)\times\bbLskew\times \bH_\rN(\div;\Omega)\times\rL^2(\Omega)$ and $(\bsigma_h,\bu_h,\brho_h,\bvarphi_h,p_h)\in  \bbH^{\bsigma}_{h}\times\bH^{\bu}_{h}\times\bbH^{\brho}_{h}\times \bH^{\bvarphi}_{h}\times\rH^{p}_{h}$ be the unique solutions of \eqref{eq:weak-formulation} and \eqref{eq:weak-formulation-h}, respectively.
Then, there exists $C_{\mathrm{C\acute{e}a}}>0$, independent of $h$, such that
\begin{equation}\label{eq:cea-estimate}
\begin{array}{ll}
\ds \|\texttt{e}_{\bsigma}\|_{\bdiv;\Omega} + \|\texttt{e}_{\bu}\|_{0,\Omega} + \|\texttt{e}_{\brho}\|_{0,\Omega} + \|\texttt{e}_{\bvarphi}\|_{\div;\Omega} + \|\texttt{e}_{p}\|_{0,\Omega}\\[1ex]
\ds \qquad \leq C_{\mathrm{C\acute{e}a}}\,\dist \big( (\bsigma,\bu,\brho,\bvarphi,p),\, \bbH^{\bsigma}_{h}\times\bH^{\bu}_{h}\times\bbH^{\brho}_{h}\times \bH^{\bvarphi}_{h}\times\rH^{p}_{h}   \big).
 \end{array}
\end{equation}
\end{theorem}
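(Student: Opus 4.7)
The plan is to combine the two partial C\'ea estimates of Lemmas~\ref{lem:CEA-1} and \ref{lem:CEA-2}, absorb the coupling terms via the smallness condition \eqref{eq:assumption-cea}, and then conclude by the triangle inequality and by taking infima over the discrete subspaces.

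First I would add \eqref{eq:bound-chi-1} and \eqref{eq:bound-chi-2}. This yields, for arbitrary projection data $(\wh{\btau}_h,\wh{\bv}_h,\wh{\bbeta}_h,\wh{\bpsi}_h,\wh{q}_h)$ in the discrete product space, an inequality whose left-hand side is
\[
\|\bchi_{\bsigma}\|_{\bdiv;\Omega} + \|\bchi_{\bu}\|_{0,\Omega} + \|\bchi_{\brho}\|_{0,\Omega} + \|\bchi_{\bvarphi}\|_{\div;\Omega} + \|\chi_p\|_{0,\Omega},
\]
and whose right-hand side splits into (i) a linear combination of the approximation errors $\|\bxi_{\bullet}\|$ with coefficients depending only on $C_3^*$, $\wt C_5^*$, and $\gamma$; (ii) the term $C_4^*\,\gamma\,\|\chi_p\|_{0,\Omega}$ from Lemma~\ref{lem:CEA-1}; (iii) the term $\wt C_6^*\bigl(\|g\|_{0,\Omega} + \|p_{\rD}\|_{1/2,\Gamma_\rD} + \|\bu_\rD\|_{1/2,\Gamma_\rD} + \|\fb\|_{0,\Omega} + \gamma r\bigr)\|\chi_p\|_{0,\Omega}$ from Lemma~\ref{lem:CEA-2}; and (iv) the term $\wt C_6^*\,\gamma\,\|\bchi_{\bsigma}\|_{\bdiv;\Omega}$, also from Lemma~\ref{lem:CEA-2}.

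The key observation is that the total coefficient of $\|\bchi_{\bsigma}\|_{\bdiv;\Omega}$ on the right-hand side is $\wt C_6^*\gamma$, while the total coefficient of $\|\chi_p\|_{0,\Omega}$ is $C_4^*\gamma + \wt C_6^*\bigl(\|g\|_{0,\Omega} + \|p_{\rD}\|_{1/2,\Gamma_\rD} + \|\bu_\rD\|_{1/2,\Gamma_\rD} + \|\fb\|_{0,\Omega} + \gamma r\bigr)$. Their sum is precisely the left-hand side of the smallness condition \eqref{eq:assumption-cea}, and hence is bounded by $1/2$; in particular each coefficient is itself bounded by $1/2$. Moving both terms to the left-hand side therefore yields coefficients of at least $1/2$ in front of $\|\bchi_{\bsigma}\|_{\bdiv;\Omega}$ and $\|\chi_p\|_{0,\Omega}$, and consequently
\[
\|\bchi_{\bsigma}\|_{\bdiv;\Omega} + \|\bchi_{\bu}\|_{0,\Omega} + \|\bchi_{\brho}\|_{0,\Omega} + \|\bchi_{\bvarphi}\|_{\div;\Omega} + \|\chi_p\|_{0,\Omega} \leq \widehat{C}\bigl(\|\bxi_{\bsigma}\|_{\bdiv;\Omega} + \|\bxi_{\bu}\|_{0,\Omega} + \|\bxi_{\brho}\|_{0,\Omega} + \|\bxi_{\bvarphi}\|_{\div;\Omega} + \|\xi_p\|_{0,\Omega}\bigr),
\]
for a constant $\widehat{C}>0$ independent of $h$.

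The conclusion would then follow by applying the triangle inequality $\|\texttt{e}_{\bullet}\| \leq \|\bxi_{\bullet}\| + \|\bchi_{\bullet}\|$ to each of the five error components and taking the infimum of the right-hand side over all admissible $(\wh{\btau}_h,\wh{\bv}_h,\wh{\bbeta}_h,\wh{\bpsi}_h,\wh{q}_h)$ in the discrete product space, which produces exactly the distance appearing on the right-hand side of \eqref{eq:cea-estimate}. The main obstacle is the bookkeeping in the second step: one must verify that the coupling coefficients multiplying $\|\bchi_{\bsigma}\|_{\bdiv;\Omega}$ and $\|\chi_p\|_{0,\Omega}$ add up to precisely the left-hand side of \eqref{eq:assumption-cea}, so that the assumption provides the correct absorption factor; once this match is confirmed, the remainder of the argument is a routine Strang-type combination.
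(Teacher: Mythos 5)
Your proposal is correct and follows essentially the same route as the paper: add the two partial C\'ea bounds from Lemmas~\ref{lem:CEA-1} and~\ref{lem:CEA-2}, observe that the coupling coefficients of $\|\bchi_\bsigma\|_{\bdiv;\Omega}$ and $\|\chi_p\|_{0,\Omega}$ sum to the left side of \eqref{eq:assumption-cea} (hence each is $\leq 1/2$), absorb them into the left-hand side, and finish with the triangle inequality and the arbitrariness of the discrete interpolants. The bookkeeping you flagged as the main obstacle indeed checks out, and your final constant $2(C_3^*+\wt C_5^*)$ (plus $1$ from the triangle inequality) matches what the paper obtains.
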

\begin{proof}
Combining \eqref{eq:bound-chi-1} and \eqref{eq:bound-chi-2}, and using the assumption \eqref{eq:assumption-cea}, we deduce 
\begin{align*}
& \|\bchi_{\bsigma}\|_{\bdiv;\Omega} + \|\bchi_{\bu}\|_{0,\Omega} + \|\bchi_{\brho}\|_{0,\Omega} + \|\bchi_{\bvarphi}\|_{\div;\Omega} + \|\chi_{p}\|_{0,\Omega} \\
&\quad \leq C_3^* (\|\bxi_{\bsigma}\|_{\bdiv;\Omega} + \|\bxi_{\bu}\|_{0,\Omega} + \|\bxi_{\brho}\|_{0,\Omega} +  \|\xi_{p}\|_{0,\Omega}  ) \\
&\qquad \ds + \wt C^*_5\,(\|\bxi_{\bvarphi}\|_{\div;\Omega} + \|\xi_{p}\|_{0,\Omega} + \|\bxi_{\bsigma}\|_{\bdiv;\Omega}) + \dfrac{1}{2} \|\chi_{p}\|_{0,\Omega} + \dfrac{1}{2} \|\bchi_{\bsigma}\|_{\bdiv;\Omega}.
\end{align*} 
And from the latter inequality we obtain
\begin{equation}\label{eq:auxiliar-equation-3}
\begin{array}{ll}
\ds \|\bchi_{\bsigma}\|_{\bdiv;\Omega} + \|\bchi_{\bu}\|_{0,\Omega} + \|\bchi_{\brho}\|_{0,\Omega} + \|\bchi_{\bvarphi}\|_{\div;\Omega} + \|\chi_{p}\|_{0,\Omega} \\[1ex]
\qquad \ds\leq 2(C_3^* + \wt C^*_5) (\|\bxi_{\bsigma}\|_{\bdiv;\Omega} + \|\bxi_{\bu}\|_{0,\Omega} + \|\bxi_{\brho}\|_{0,\Omega} + \|\bxi_{\bvarphi}\|_{\div;\Omega} +  \|\xi_{p}\|_{0,\Omega}  ).
\end{array}
\end{equation} 
In this way, from \eqref{eq:decompositions}, \eqref{eq:auxiliar-equation-3} and the triangle inequality we obtain
\begin{align*}
& \|\texttt{e}_{\bsigma}\|_{\bdiv;\Omega} + \|\texttt{e}_{\bu}\|_{0,\Omega} + \|\texttt{e}_{\brho}\|_{0,\Omega} + \|\texttt{e}_{\bvarphi}\|_{\div;\Omega} + \|\texttt{e}_{p}\|_{0,\Omega}\\
&\quad \leq \|\bchi_{\bsigma}\|_{\bdiv;\Omega} + \|\bxi_{\bsigma}\|_{\bdiv;\Omega} + \|\bchi_{\bu}\|_{0,\Omega} + \|\bxi_{\bu}\|_{0,\Omega}\\ &\qquad  + \|\bchi_{\brho}\|_{0,\Omega} + \|\bxi_{\brho}\|_{0,\Omega} + \|\bchi_{\bvarphi}\|_{\div;\Omega} + \|\bxi_{\bvarphi}\|_{\div;\Omega} + \|\chi_{p}\|_{0,\Omega}+  \|\xi_{p}\|_{0,\Omega}\\
&\quad \leq (2C_3^* + 2\wt C^*_5 + 1) (\|\bxi_{\bsigma}\|_{\bdiv;\Omega} + \|\bxi_{\bu}\|_{0,\Omega} + \|\bxi_{\brho}\|_{0,\Omega} + \|\bxi_{\bvarphi}\|_{\div;\Omega} +  \|\xi_{p}\|_{0,\Omega}  ),
 \end{align*}
which combined with the fact that $(\wh{\btau}_h,(\wh{\bv}_h,\wh{\bbeta}_h) )\in\bbH^{\bsigma}_{h}\times(\bH^{\bu}_{h}\times\bbH^{\brho}_{h})$ and $(\wh{\bpsi}_h,\wh{q}_h)\in \bH^{\bvarphi}_{h}\times\rH^{p}_{h}$ are arbitrary (see \eqref{eq:decompositions}), concludes the proof.
\end{proof}

\subsection{Rates of convergence}
In order to establish the rate of convergence of the Galerkin scheme \eqref{eq:weak-formulation-h}, we first recall the following approximation properties 
associated with the FE spaces  specified in Section~\ref{sec:spaces}.
\medskip

For each $0 < m \leq k+1$ and for each $\btau\in \bbH^{m}(\Omega)\cap \bbH_\rN(\bdiv;\Omega)$ with $\bdiv\,\btau\in \bH^{m}(\Omega)$, 
$\bv\in \bH^{m}(\Omega)$,  $\bbeta\in \bbH^{m}(\Omega)\cap \bbL^2_{\rskew}(\Omega)$,  $\bpsi\in \bH^{m}(\Omega)\cap \bH_\rN(\revX{\div};\Omega)$ with $\div\,\bv\in \rH^{m}(\Omega)$, and $q\in \rH^{m}(\Omega)$, 
there holds 
\begin{subequations}\label{approx-prop}
\begin{align}
\label{eq:rate-sigma}
\dist\big(\btau,\bbH^{\bsigma}_{h}\big) &:= \inf_{\btau_h \in \bbH^{\bsigma}_{h} } \|\btau - \btau_h\|_{\bdiv;\Omega} \lesssim h^{m}\,\Big\{ \|\btau\|_{m,\Omega} + \|\bdiv\,\btau\|_{m,\Omega} \Big\},\\
\label{eq:rate-u}
\dist\big(\bv,\bH^{\bu}_{h}\big) &:= \inf_{\bv_h \in \bH^{\bu}_{h}} \|\bv - \bv_h\|_{0,\Omega} \lesssim  h^{m}\,\|\bv\|_{m,\Omega},\\
\label{eq:rate-rho}
\dist\big(\bbeta,\bbH^{\brho}_{h}\big) &:= \inf_{\bbeta_h \in \bbH^{\brho}_{h}} \|\bbeta - \bbeta_h\|_{0,\Omega} \lesssim  h^{m}\,\|\bbeta\|_{m,\Omega},\\
%
\label{eq:rate-varphi}
\dist\big(\bpsi,\bH^{\bvarphi}_{h}\big) &:= \inf_{\bpsi_h \in \bH^{\bvarphi}_{h} } \|\bpsi - \bpsi_h\|_{\div;\Omega} \lesssim h^{m}\,\Big\{ \|\bpsi\|_{m,\Omega} + \|\div\,\bpsi\|_{m,\Omega} \Big\},\\
\label{eq:rate-p}
\dist\big(q,\rH^{p}_{h}\big) &:= \inf_{q_h \in \rH^{p}_{h}} \|q - q_h\|_{0,\Omega} \lesssim  h^{m}\,\|q\|_{m,\Omega}.
\end{align}
\end{subequations}
For \eqref{eq:rate-sigma}, \eqref{eq:rate-u} and \eqref{eq:rate-rho} we refer to \cite[Th. 2.4]{gatica2013priori}, whereas \eqref{eq:rate-varphi} and \eqref{eq:rate-p} are provided in \cite[Th. 3.6]{gatica14} and \cite[Proposition 1.134]{ernguermond}, respectively.
%
%
With these steps we are now in \revX{a} position to state the rates of convergence associated with the Galerkin scheme \eqref{eq:weak-formulation-h}.

\begin{theorem}\label{th:rate1}
Assume that the hypotheses of Theorem~\ref{th:cea-estimate} hold and let $(\bsigma,\bu,\brho,\bvarphi,p)\in \bbH_\rN(\bdiv;\Omega)\times\bL^2(\Omega)\times\bbLskew\times \bH_\rN(\div;\Omega)\times\rL^2(\Omega)$ and $(\bsigma_h,\bu_h,\brho_h,\bvarphi_h,p_h)\in  \bbH^{\bsigma}_{h}\times\bH^{\bu}_{h}\times\bbH^{\brho}_{h}\times \bH^{\bvarphi}_{h}\times\rH^{p}_{h}$ be the unique solutions of the continuous and discrete problems \eqref{eq:weak-formulation} and \eqref{eq:weak-formulation-h}, respectively.
Assume further that  $\bsigma\in \bbH^{m}(\Omega)$, $\bdiv \,\bsigma \in \bH^{m}(\Omega)$, $\bu \in \bH^{m}(\Omega)$, $\brho\in \bbH^{m}(\Omega)$, $\bvarphi\in \bH^{m}(\Omega)$, $\div \,\bvarphi \in \rH^{m}(\Omega)$ and $p\in \rH^{m}(\Omega)$, for $1\leq m \leq k+1$.
Then there exists $C_{\mathrm{rate}}>0$, independent of $h$, such that 
\begin{equation*}
\begin{array}{ll}
\ds \|\texttt{e}_{\bsigma}\|_{\bdiv;\Omega} + \|\texttt{e}_{\bu}\|_{0,\Omega} + \|\texttt{e}_{\brho}\|_{0,\Omega} + \|\texttt{e}_{\bvarphi}\|_{\div;\Omega} + \|\texttt{e}_{p}\|_{0,\Omega}\\[1ex]
\ds \quad  \leq\
C_{\mathrm{rate}}\,h^{m}\Big\{  \|\bsigma\|_{m,\Omega} + \|\bdiv\,\bsigma\|_{m,\Omega} + \|\bu\|_{m,\Omega} + \|\brho\|_{m,\Omega} + \|\bvarphi\|_{m,\Omega} +\|\div\,\bvarphi\|_{m,\Omega}  + \|p\|_{m,\Omega}\Big\}.
\end{array}
\end{equation*}
\end{theorem}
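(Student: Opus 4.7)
The plan is a direct consequence of the C\'ea estimate established in Theorem~\ref{th:cea-estimate}, combined with the approximation properties \eqref{eq:rate-sigma}--\eqref{eq:rate-p} of the finite element subspaces introduced in Section~\ref{sec:spaces}. Since the hypotheses of Theorem~\ref{th:cea-estimate} are in force, \eqref{eq:cea-estimate} yields the abstract bound
\begin{equation*}
\|\texttt{e}_{\bsigma}\|_{\bdiv;\Omega} + \|\texttt{e}_{\bu}\|_{0,\Omega} + \|\texttt{e}_{\brho}\|_{0,\Omega} + \|\texttt{e}_{\bvarphi}\|_{\div;\Omega} + \|\texttt{e}_{p}\|_{0,\Omega}
\le C_{\mathrm{C\acute{e}a}}\,\dist\big( (\bsigma,\bu,\brho,\bvarphi,p),\, \bbH^{\bsigma}_{h}\times\bH^{\bu}_{h}\times\bbH^{\brho}_{h}\times \bH^{\bvarphi}_{h}\times\rH^{p}_{h}   \big),
\end{equation*}
with a constant independent of $h$.

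Next, I would bound the global distance on the right-hand side by the sum of the five individual distances, which decouples the problem into estimating each unknown separately in its natural norm. For $\bsigma$ I would invoke \eqref{eq:rate-sigma} using the regularity $\bsigma\in \bbH^{m}(\Omega)$ and $\bdiv\,\bsigma\in\bH^{m}(\Omega)$; for $\bu$ the scalar $L^2$ estimate \eqref{eq:rate-u}; for $\brho$ the skew-symmetric continuous-tensor estimate \eqref{eq:rate-rho}; for $\bvarphi$ the Raviart--Thomas estimate \eqref{eq:rate-varphi} with both $\bvarphi$ and $\div\bvarphi$ regular; and for $p$ the piecewise-polynomial estimate \eqref{eq:rate-p}. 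Each contributes an $h^m$ factor times the corresponding $m$-norm of the component.

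Summing these contributions and collecting the implicit constants into a single $C_{\mathrm{rate}}>0$ (independent of $h$, but depending on $C_{\mathrm{C\acute{e}a}}$, on the shape-regularity of the family $\{\cT_h\}$, and on the constants hidden in \eqref{approx-prop}) delivers the claimed bound. No genuine obstacle is expected: the C\'ea estimate is already established under the data smallness \eqref{eq:assumption-cea}, and the cited approximation results \cite[Th.~2.4]{gatica2013priori}, \cite[Th.~3.6]{gatica14}, and \cite[Proposition~1.134]{ernguermond} apply verbatim since the discrete spaces $\bbH^{\bsigma}_{h}$, $\bH^{\bu}_{h}$, $\bbH^{\brho}_{h}$, $\bH^{\bvarphi}_{h}$, $\rH^{p}_{h}$ coincide with those for which the interpolation error estimates were proven. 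The mildest verification needed is that the restriction $1\le m \le k+1$ is compatible with the polynomial degrees fixed in \eqref{fe:peers}--\eqref{fe:weak-2}, which holds by construction. This completes the plan.
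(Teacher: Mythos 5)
Your argument is exactly the paper's proof, which in the manuscript is simply stated as a direct consequence of the C\'ea estimate \eqref{eq:cea-estimate} and the approximation properties \eqref{approx-prop}; your fuller spelling-out of the decoupling into the five component distances and the role of the regularity hypotheses is correct and adds nothing that would diverge from the intended argument.
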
 
\begin{proof}
The result follows from C\'ea estimate \eqref{eq:cea-estimate}
and the approximation properties \eqref{approx-prop}. 
\end{proof}	

\begin{remark}
Similarly to \cite{lamichhane24}, the analysis  developed in Sections \ref{sec:weak}-\ref{sec:error} can be adapted to a formulation without the variable $\brho$ ($\brho_h$ in the discrete problem), imposing  symmetry of $\bsigma$ by taking  $\bsigma\in\bbHsym\,:=\,\big\{ \btau \in \bbLsym:\  \bdiv\,\btau \in \bL^2(\Omega) \big\}$ and $\bbLsym:=\{\btau \in \bbL^2(\Omega):\,\btau = \btau^{\tt t} \}$, utilizing results from \cite[Section 2.2]{lamichhane24} (\cite[Section 4.1]{lamichhane24} for the discrete problem), and adapting the strategy used in, e.g., \cite[Sections 3 and 4]{lamichhane24}.
\end{remark}
\section{A posteriori error estimates}\label{sec:aposteriori}
In this section we derive residual-based a posteriori error estimates  and  demonstrate the reliability and efficiency of the proposed estimators. Mainly due to notational convenience, we confine our analysis to the two-dimensional case. The extension to three-dimensional case should be quite straightforward (see, e.g., \cite{cgor2023}).
Similarly to \cite[Section~4]{ccov2022}, we introduce additional notation. Let $\mathcal{E}_{h}$ be the set of edges of $\mathcal{T}_{h}$, whose corresponding diameters are denoted $h_E$, and define
$$
\textcolor{lightblue}{\mathcal{E}_h(\star)\,:=\,\big\{\,E\in\mathcal{E}_h\,:\; E\subseteq \star \, \big\},\quad \star \in \{ \Omega, \Gamma_\rD, \Gamma_\rN\}}.
$$
On each $E\in\mathcal{E}_{h}$, we also define the unit normal vector $\bn_E:=(n_1,n_2)^\rt$  and the tangential vector $\bs_E:=(-n_2,n_1)^\rt$. However, when no confusion arises, we will simply write $\bn$ and $\bs$ instead of $\bn_E$ and $\bs_E$, respectively. Also, by $\frac{\mathrm{d}}{\mathrm{d}\bs}$ we denote the tangential derivative. The usual jump operator $[\![ \cdot ]\!]$ across internal edges are defined for piecewise continuous matrix, vector, or scalar-valued functions. For sufficiently smooth scalar $\psi$, vector $\bv := (v_1,v_2)^\rt$, and tensor fields $\btau := (\tau_{ij})_{1\leq i,j\leq2}$, we let
\begin{gather*}
\curl(\psi):=\Big(\frac{\partial\psi}{\partial x_2}\,,\,-\frac{\partial\psi}{\partial x_1}\Big)^{\tt t}
\,,\quad
\rot(\bv):=\frac{\partial v_2}{\partial x_1}-\frac{\partial v_1}{\partial x_2}\,,\quad \bcurl(\bv)=\begin{pmatrix}\curl (v_1)^\rt\\[1ex] 
\curl (v_2)^\rt\end{pmatrix},\\
\qan \underline{\bcurl}(\btau)=\begin{pmatrix}\rot(\btau_1)\\
\rot(\btau_2)\end{pmatrix}.
\end{gather*}
In addition, we denote by $\Pi_h$ the Raviart--Thomas interpolator and by $I_{h}$ the Cl\'ement operator (see, e.g., \cite[Section~3]{alvarez16} for their properties). In what follows, we denote by $\bPi_h$ the tensor version of $\Pi_h$, which is defined row-wise by $\Pi_h$ and by $\bI_h$ the corresponding vector version of $I_h$ which is defined componentwise by $I_h$.

In what follows, we will assume that the hypotheses of Theorems \ref{theorem:unique-solution-weak1} and \ref{theorem:unique-solution-weak1-h} are satisfied. Let $\bsigma_h$, $\bu_h$, $\brho_h$, $\bvarphi_h$, $p_h$ denote the FE solutions of \eqref{eq:weak-formulation-h}. We define the residual-based and fully computable local contributions to the error estimator $\Xi_K^2$, defined as the sum of $\Xi_{s,K}^2$ and $\Xi_{f,K}^2$, where $\Xi_{s,K}$ and $\Xi_{f,K}$ pertain to the solid (mixed elasticity) and fluid (mixed Darcy) components, respectively: 
\begin{subequations}
\begin{align}\label{eq:res-1}
\Xi_{s,K}^2 &:=\|\fb +\bdiv \bsigma_h \|_{0,K}^2+\|\bsigma_h-\bsigma_h^{\tt t}\|^2_{0,K}+h_K^2\|\mathcal{C}^{-1}\bsigma_h + \dfrac{\alpha}{d\lambda+2\mu} p_h \mathbf{I} - \bnabla \bu_h +\brho_h\|_{0,K}^2\nonumber\\
&\quad +h_K^2\|\underline{\mathbf{curl}}(\mathcal{C}^{-1}\bsigma_h + \dfrac{\alpha}{d\lambda+2\mu} p_h \mathbf{I}  +\brho_h)\|_{0,K}^2+\sum_{E\in \partial K \cap \mathcal{E}_h(\Gamma_\rD)}h_E \|\bu_\rD - \bu_h \|_{0,E}^2\nonumber\\
&\quad +\sum_{E\in \partial K \cap \mathcal{E}_h(\Omega)}h_E \|[\![(\mathcal{C}^{-1}\bsigma_h + \dfrac{\alpha}{d\lambda+2\mu} p_h \mathbf{I} +\brho_h)\bs]\!]\|_{0,E}^2\nonumber\\
&\quad +\sum_{E\in \partial K \cap \mathcal{E}_h(\Gamma_\rD)}h_E \|(
\mathcal{C}^{-1}\bsigma_h + \dfrac{\alpha}{d\lambda+2\mu} p_h \mathbf{I} +\brho_h)\bs -\frac{\mathrm{d} \bu_\rD}{\mathrm{d}\bs}\|_{0,E}^2 ,
\\
\label{eq:res-2}
 \Xi_{f,K}^2 &:= \|c_0 p_h + \dfrac{\alpha}{d\lambda+2\mu}\tr\bsigma_h + \dfrac{d\alpha^2}{d\lambda+2\mu} p_h - \div \bvarphi_h-g\|_{0,K}^2
+h_K^2\|\bkappa(\bsigma_h,p_h)^{-1}\bvarphi_h- \nabla p_h\|_{0,K}^2\nonumber\\
&\quad +h_K^2\|\rot(\bkappa(\bsigma_h,p_h)^{-1}\bvarphi_h)\|_{0,K}^2+\sum_{E\in \partial K \cap \mathcal{E}_h(\Omega)}h_E \|[\![(\bkappa(\bsigma_h,p_h)^{-1}\bvarphi_h)\cdot\bs]\!]\|_{0,E}^2 \nonumber\\
&\quad + \sum_{E\in \partial K \cap \mathcal{E}_h(\Gamma_\rD)}h_E \|\bkappa(\bsigma_h,p_h)^{-1}\bvarphi_h\cdot\bs -\frac{\mathrm{d} p_\rD}{\mathrm{d}\bs}\|_{0,E}^2 
 + \sum_{E\in \partial K \cap \mathcal{E}_h(\Gamma_\rD)}h_E \|p_\rD - p_h\|_{0,E}^2.
\end{align}
\end{subequations}
%
Then, we define the global estimator 
\begin{equation}\label{eq:estimator}
\Xi^2:= \sum_{K\in\mathcal{T}_h} \Xi_{s,K}^2 + \Xi_{f,K}^2.
\end{equation}

\subsection{Reliability of the a posteriori error estimator}

First we prove preliminary results that will be key in showing the reliability of the global estimator. 
\begin{lemma}\label{lem-res-1}
There exists $C_1>0$, such that
    \begin{equation*}
\|\bsigma-\bsigma_h\|_{\bdiv;\Omega} + \|\bu-\bu_h\|_{0,\Omega} + \|\brho-\brho_h\|_{0,\Omega}\leq C_1 \big(\|\cR_1\|+ \|\fb+\bdiv(\bsigma_h)\|_{0,\Omega} + \|\bsigma_h-\bsigma_h^t\|_{0,\Omega} \big),
\end{equation*}
    where 
\begin{equation}\label{eq:def-R-1}
 \ds \cR_1 (\btau):=a(\bsigma-\bsigma_h,\btau) + b(\btau,(\bu-\bu_h,\brho-\brho_h)),   
\end{equation}    
with $\cR_1 (\btau_h)=-c(\btau_h,p-p_h)$ for all $\btau_h \in \bbH^{\bsigma}_h$, and $\ds ||\cR_1||=\sup_{\bzero\neq\btau  \in \bbH_{\rN}(\bdiv;\Omega)}\frac{\mathcal{R}_1({\btau})}{\|\btau\|_{\bdiv;\Omega} }.$ 
\end{lemma}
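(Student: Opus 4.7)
The proof rests on a global inf-sup condition for the saddle-point problem defined by the bilinear forms $a$ and $b$ on $\bbH_\rN(\bdiv;\Omega)\times[\bL^2(\Omega)\times\bbLskew]$. From the $\bbV$-ellipticity of $a$ (cf.~\eqref{eq:ellipticity-a}) together with the inf-sup condition \eqref{eq:infsup-b} for $b$, the classical Babu\v ska--Brezzi theory (see, e.g., \cite[Proposition~2.36]{ernguermond}) yields the existence of a constant $C_0>0$ such that, for every $(\btau^*,(\bv^*,\bbeta^*))\in\bbH_\rN(\bdiv;\Omega)\times\bL^2(\Omega)\times\bbLskew$,
\begin{equation*}
\|\btau^*\|_{\bdiv;\Omega} + \|\bv^*\|_{0,\Omega} + \|\bbeta^*\|_{0,\Omega} \le C_0 \sup_{\bzero\neq (\btau,(\bv,\bbeta))} \frac{a(\btau^*,\btau) + b(\btau,(\bv^*,\bbeta^*)) + b(\btau^*,(\bv,\bbeta))}{\|\btau\|_{\bdiv;\Omega} + \|\bv\|_{0,\Omega} + \|\bbeta\|_{0,\Omega}}.
\end{equation*}
The first step is to apply this estimate with $(\btau^*,(\bv^*,\bbeta^*))=(\bsigma-\bsigma_h,(\bu-\bu_h,\brho-\brho_h))$, so that the numerator splits as $\cR_1(\btau) + \cR_2(\bv,\bbeta)$, where $\cR_1$ is the functional given in \eqref{eq:def-R-1} and $\cR_2(\bv,\bbeta) := b(\bsigma-\bsigma_h,(\bv,\bbeta))$ plays the role of the residual of \eqref{eq:2.2b}.

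The second step is to rewrite $\cR_2$ in computable form. Using \eqref{eq:2.2b} together with the definitions of $F$ and $b$,
\begin{equation*}
\cR_2(\bv,\bbeta) = F(\bv,\bbeta) - b(\bsigma_h,(\bv,\bbeta)) = -\int_\Omega (\fb + \bdiv\,\bsigma_h)\cdot\bv - \int_\Omega \bsigma_h:\bbeta.
\end{equation*}
Since $\bbeta\in\bbLskew$, only the skew part of $\bsigma_h$ contributes to the last integral, i.e.\ $\int_\Omega \bsigma_h:\bbeta = \frac{1}{2}\int_\Omega (\bsigma_h - \bsigma_h^{\tt t}):\bbeta$, so Cauchy--Schwarz gives
\begin{equation*}
|\cR_2(\bv,\bbeta)| \le \|\fb + \bdiv\,\bsigma_h\|_{0,\Omega}\,\|\bv\|_{0,\Omega} + \tfrac{1}{2}\|\bsigma_h - \bsigma_h^{\tt t}\|_{0,\Omega}\,\|\bbeta\|_{0,\Omega}.
\end{equation*}
Inserting this bound into the global inf-sup estimate yields the stated inequality, with $C_1$ depending only on $C_0$ (hence ultimately on $c_a$, $\beta$, and $\mu$). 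The auxiliary identity $\cR_1(\btau_h) = -c(\btau_h,p-p_h)$ for $\btau_h\in\bbH^{\bsigma}_h$ is obtained for free by subtracting the first equation of \eqref{eq:weak-formulation-h} from \eqref{eq:2.2a} and testing against $\btau_h$, which is the standard Galerkin consistency for this mixed scheme.

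The approach is fairly standard for residual-based a posteriori analysis of weakly symmetric mixed elasticity; no delicate geometric or Helmholtz-decomposition argument is needed at this stage (those will enter later, to control $\|\cR_1\|$ itself by the local indicators). The main point that nevertheless requires some care is that the constant $C_0$, and therefore $C_1$, remains uniformly bounded in the problematic regimes $\lambda\to\infty$ and $c_0\to 0$. This follows from the choice of $a(\cdot,\cdot)$ based on $\C^{-1}$, together with the explicit form of $c_a$ noted in the remark after \eqref{eq:ellipticity-a}, and is what ultimately drives the parameter-robustness of the a posteriori estimate.
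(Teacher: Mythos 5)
Your proof is correct and follows essentially the same route as the paper's: apply the global inf-sup condition from \cite[Proposition~2.36]{ernguermond} (justified by \eqref{eq:ellipticity-a} and \eqref{eq:infsup-b}) to the error tuple, split the numerator into the $\cR_1$ part and the $b(\bsigma-\bsigma_h,(\bv,\bbeta))$ part, and bound the latter via \eqref{eq:2.2b} and the skew-symmetry of $\bbeta$. The only cosmetic differences are that you name the residual $\cR_2$ explicitly, keep the harmless $\tfrac12$ factor, and spell out the Galerkin-orthogonality identity for $\cR_1(\btau_h)$, none of which changes the argument.
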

\begin{proof}
Using the properties of bilinear forms $a$ and $b$, as outlined in equations \eqref{eq:ellipticity-a} and \eqref{eq:infsup-b}, along with the insight from \cite[Proposition 2.36]{ernguermond}, there exists $C_1>0$ depending on $\mu,\, c_a,\, \beta$ such that
\begin{align*}
&\|{\tt e}_{\bsigma}\|_{\bdiv;\Omega} + \|{\tt e}_{\bu}\|_{0,\Omega} + \|{\tt e}_{\brho}\|_{0,\Omega} 
 \leq C_1 \!\!\!\!\!\!\!\!\sup_{\substack{\bzero\neq(\btau,\bv,\bbeta) \\ \quad \in \bbH_\rN(\bdiv;\Omega)\times\bL^2(\Omega)\times\bbLskew} } \!\!\!\!\frac{a({\tt e}_{\bsigma},\btau) + b(\btau,({\tt e}_{\bu},{\tt e}_{\brho})) + b({\tt e}_{\bsigma},(\bv,\bbeta))}{\|\btau\|_{\bdiv;\Omega} + \|\bv\|_{0,\Omega} + \|\bbeta\|_{0,\Omega}}\\
&\qquad \leq C_1 \;\Big(\sup_{\bzero\neq \btau\in \bbH_{\rN}(\bdiv;\Omega) } \frac{\cR_1({\btau})}{\|\btau\|_{\bdiv;\Omega} }+\;\sup_{\bzero\neq(\bv,\bbeta)\in  \bL^2(\Omega)\times\bbLskew } \frac{b({\tt e}_{\bsigma},(\bv,\bbeta))}{ \|\bv\|_{0,\Omega} + \|\bbeta\|_{0,\Omega}}\Big).
\end{align*}
Then, recalling the definitions of the bilinear form $b$ (cf. \eqref{eq:forms}), using the equation \eqref{eq:2.2b}, along with the fact that $\int_{\Omega} \bsigma_h:\bbeta =\frac{1}{2} \int_{\Omega} (\bsigma_h-\bsigma_h^{\tt t}):\bbeta$ for $\bbeta \in \bbLskew $, the following estimate holds
\[|b({\tt e}_{\bsigma},(\bv,\bbeta))|\le ( ||\fb+\bdiv\,\bsigma_h||_{0,\Omega} + ||\bsigma_h-\bsigma_h^{\tt t}||_{0,\Omega})(\|\bv\|_{0,\Omega} + \|\bbeta\|_{0,\Omega}),\]
and this gives the asserted inequality. 
\end{proof}

\begin{lemma}\label{lem-res-2}
There exists $C_2>0$ such that
\begin{align*}
&\|\bvarphi-\bvarphi_h\|_{\div;\Omega} + \|p-p_h\|_{0,\Omega}\\
& \quad \leq C_2 \big(\|\cR_2\|+ \|g-c_0 p_h - \dfrac{\alpha}{d\lambda+2\mu}\tr\bsigma_h - \dfrac{d\alpha^2}{d\lambda+2\mu} p_h + \div \bvarphi_h \|_{0,\Omega} + \gamma\|\bsigma-\bsigma_h \|_{\bdiv;\Omega}\big),
\end{align*}
where
\begin{equation*}
\cR_2 (\bpsi):=\wt a_{\bsigma,p}(\bvarphi-\bvarphi_h,\bpsi) + \wt b(\bpsi,p-p_h),  
\end{equation*}
satisfies $\cR_2 (\bpsi_h)=0$ for all $\bpsi_h \in \bH^{\bvarphi}_h$, and $\ds ||\cR_2||=
    \sup_{\bzero\neq\bpsi  \in \bH_{\rN}(\div;\Omega) }   \frac{\cR_2(\bpsi)}{\|\bpsi\|_{\div;\Omega} }.$
\end{lemma}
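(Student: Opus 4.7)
The plan is to mirror the strategy used in Lemma \ref{lem-res-1}, but now for the perturbed saddle-point sub-system of the fluid. First, I would invoke a continuous global inf-sup condition for the operator associated with the trilinear combination $(\wt a_{\bsigma,p},\wt b,\wt c)$. This follows from the ellipticity of $\wt a_{\bsigma,p}$ on the kernel of $\wt b$ (see \eqref{eq:ellipticity-hat-a}), the inf-sup condition \eqref{eq:infsup-hat-b}, the ellipticity of $\wt c$ (see \eqref{eq:ellipticity-hat-c}), and the abstract perturbed saddle-point result \cite[Th.~3.4]{cg2022} already used in Lemma~\ref{lem:well-def-S}. This yields a constant $C_2>0$, independent of $h$ and of the discrete solution, such that
\[
\|\bvarphi-\bvarphi_h\|_{\div;\Omega}+\|p-p_h\|_{0,\Omega}
\leq C_2 \sup_{(\bpsi,q)\neq \bzero}\frac{N(\bpsi,q)}{\|\bpsi\|_{\div;\Omega}+\|q\|_{0,\Omega}},
\]
where the supremum runs over $\bH_\rN(\div;\Omega)\times \rL^2(\Omega)$ and
\[
N(\bpsi,q):=\wt a_{\bsigma,p}(\bvarphi-\bvarphi_h,\bpsi)+\wt b(\bpsi,p-p_h)+\wt b(\bvarphi-\bvarphi_h,q)-\wt c(p-p_h,q).
\]

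Next, I would split this supremum into two parts. The first two terms of $N(\bpsi,q)$ are precisely $\cR_2(\bpsi)$, so their contribution is controlled by $\|\cR_2\|\,\|\bpsi\|_{\div;\Omega}$. For the remaining two terms I would substitute the second continuous equation \eqref{eq:2.2d}, which gives
\[
\wt b(\bvarphi-\bvarphi_h,q)-\wt c(p-p_h,q)=\tilde F(q)+c(\bsigma,q)-\wt b(\bvarphi_h,q)+\wt c(p_h,q).
\]
Writing $c(\bsigma,q)=c(\bsigma-\bsigma_h,q)+c(\bsigma_h,q)$ and expanding each functional using \eqref{eq:forms}, the non-$(\bsigma-\bsigma_h)$ part collapses into a single $\rL^2$-inner product against $q$ with integrand
\[
-\Big(g-c_0 p_h-\dfrac{\alpha}{d\lambda+2\mu}\tr\bsigma_h-\dfrac{d\alpha^2}{d\lambda+2\mu}p_h+\div\bvarphi_h\Big).
\]
Cauchy--Schwarz on this term, combined with the boundedness of $c$ from \eqref{eq:bounded-c} on the leftover $c(\bsigma-\bsigma_h,q)$, yields
\[
|\wt b(\bvarphi-\bvarphi_h,q)-\wt c(p-p_h,q)|\leq \Big(\|\mathcal{M}_h\|_{0,\Omega}+\gamma\|\bsigma-\bsigma_h\|_{\bdiv;\Omega}\Big)\|q\|_{0,\Omega},
\]
where $\mathcal{M}_h$ denotes the data-plus-discrete-solution expression in the statement.

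Combining the two bounds, factoring out $\|\bpsi\|_{\div;\Omega}+\|q\|_{0,\Omega}$ in the supremum, and absorbing constants into $C_2$ delivers the claim. I do not expect any serious obstacle: the only point requiring care is that the global inf-sup constant arising from \cite[Th.~3.4]{cg2022} depends on $\kappa_1$, $C_{\wt a}$, $\wt\beta$, and $\wt\gamma$ (cf. \eqref{eq:Ctilde}) but is $h$-independent and, crucially (thanks to the choice of the bilinear form $a$ and the Remark after \eqref{eq:ellipticity-hat-c}), remains bounded in the limits $\lambda\to\infty$ and $c_0\to 0$, so the resulting estimate is robust. The identity $\cR_2(\bpsi_h)=0$ for $\bpsi_h\in\bH^{\bvarphi}_h$ plays no direct role in the present bound, but will become essential later when $\|\cR_2\|$ is itself estimated via local bubble/Helmholtz arguments against the estimator \eqref{eq:res-2}.
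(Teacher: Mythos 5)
Your proposal is correct and follows essentially the same path as the paper's own proof: invoke the global inf-sup estimate for the perturbed saddle-point operator via \cite[Th.~3.4]{cg2022}, isolate $\cR_2(\bpsi)$, and then use \eqref{eq:2.2d} together with the splitting $c(\bsigma,q)=c(\bsigma-\bsigma_h,q)+c(\bsigma_h,q)$ and Cauchy--Schwarz to bound the remaining $q$-supremum by the discrete mass-balance residual plus $\gamma\|\bsigma-\bsigma_h\|_{\bdiv;\Omega}$. Your observation that the Galerkin-orthogonality property of $\cR_2$ is not used in this lemma but only later is also accurate.
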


\begin{proof}
Similarly to Lemma~\ref{lem:CEA-1}, using the properties of bilinear forms $\wt a_{\bsigma,p}$ and $\wt b$ (as outlined in equations \eqref{eq:ellipticity-hat-a}, \eqref{eq:infsup-hat-b} and \eqref{eq:ellipticity-hat-c}), along with the insight from \cite[Th.~3.4]{cg2022}, we establish that there exists $C_2>0$ depending on $\kappa_1$, $\kappa_2$, $C_{\wt a}$, $\gamma$, $\wt\gamma$, $\wt\beta$ such that
\begin{align*}
\|{\tt e}_{\bvarphi}\|_{\div;\Omega} + \|{\tt e}_{p}\|_{0,\Omega}&\leq C_2 \sup_{\bzero\neq(\bpsi,q)\in \bH_{\rN}(\div;\Omega)\times \rL^2(\Omega) } \frac{\wt a_{\bsigma,p}({\tt e}_{\bvarphi},\bpsi) +\wt  b(\bpsi,{\tt e}_{p}) +\wt  b({\tt e}_{\bvarphi},q) - \wt c({\tt e}_p,q)}{\|\bpsi\|_{\div;\Omega} + \|q\|_{0,\Omega}}\\
&\leq C_2 \;\Big(\sup_{\bzero\neq \bpsi\in \bH_{\rN}(\div;\Omega) } \frac{\cR_2(\bpsi)}{\|\bpsi\|_{\div;\Omega} }+\;\sup_{0\neq q\in \rL^2(\Omega)} \frac{ \wt b({\tt e}_{\bvarphi},q) - \wt c({\tt e}_p,q)}{ \|q\|_{0,\Omega}}\Big).
\end{align*}
Hence, recalling the definitions of   $\wt b,\,\wt c$, adding $\pm\, c\, (\bsigma_h,p)$, and using \eqref{eq:2.2d}, we arrive at 
\[|\wt b({\tt e}_{\bvarphi},q) - \wt c({\tt e}_p,q)|\le  \|g-c_0 p_h - \dfrac{\alpha}{d\lambda+2\mu}\tr\bsigma_h - \dfrac{d\alpha^2}{d\lambda+2\mu} p_h + \div \bvarphi_h \|_{0,\Omega}\|q\|_{0,\Omega} + \gamma\|\bsigma-\bsigma_h \|_{\bdiv;\Omega}\|q\|_{0,\Omega},\]
and therefore, we obtain the desired result. 
\end{proof}

Throughout the rest of this section, we provide suitable upper bounds for $\cR_1$ and $\cR_2$. We begin by establishing the corresponding estimates for $\cR_1$, which are based on a suitable Helmholtz decomposition of the space $\bbH_\rN(\bdiv;\Omega)$ (see \cite[Lemma 3.9]{alvarez16} for details), along with the following two technical results.
\begin{lemma}\label{eq:rel-bound1}
There exists a positive constant $C_3$, independent of $h$, such that for each $\bxi \in \bbH^1(\Omega)$ there holds 
\begin{equation*}
\begin{array}{ll}
|\cR_1(\bxi-\bPi_h(\bxi))|\,\le  C_3 \,\gamma \|p-p_h\|_{0,\Omega}\|\bxi\|_{1,\Omega}\\
\quad \ds + C_3\biggr(\sum_{K\in\mathcal{T}_h} h_K^2||\C^{-1}\bsigma_h + \dfrac{\alpha}{d\lambda+2\mu} p_h \mathbf{I} - \bnabla \bu_h +\brho_h||_{0,K}^2 + \sum_{E \in \mathcal{E}_h(\Gamma_\rD)}h_E||\bu_\rD-\bu_h||_{0,E}^2 \biggr)^{1/2}\|\bxi\|_{1,\Omega}.
\end{array}
\end{equation*}
\end{lemma}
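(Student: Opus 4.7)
The plan is to combine a decomposition of $\cR_1$ compatible with the discrete equilibrium with the Raviart--Thomas moment identities satisfied by $\bPi_h$. First, using \eqref{eq:2.2a} I would rewrite
\[
\cR_1(\btau) \,=\, \underbrace{H(\btau)-a(\bsigma_h,\btau)-b(\btau,(\bu_h,\brho_h))-c(\btau,p_h)}_{=:\widetilde{\cR}_1(\btau)} \,-\, c(\btau,p-p_h),
\]
and observe that the first equation of \eqref{eq:weak-formulation-h} forces $\widetilde{\cR}_1(\btau_h)=0$ for every $\btau_h\in\bbH^{\bsigma}_{h}$; in particular $\widetilde{\cR}_1(\bPi_h(\bxi))=0$. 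The $c$-contribution is immediately bounded using continuity of $c$ and $\bL^2$-stability of $\bPi_h$ by $C\gamma\,\|\bxi\|_{1,\Omega}\,\|p-p_h\|_{0,\Omega}$, yielding the first term in the claim.

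It remains to estimate $\widetilde{\cR}_1(\bxi-\bPi_h(\bxi))=\widetilde{\cR}_1(\bxi-\bPi_h(\bxi))$. Expanding the definitions of $H$, $a$, $b$, $c$, adding and subtracting $\bnabla\bu_h$, and integrating $\int_\Omega \bu_h\cdot\bdiv(\bxi-\bPi_h(\bxi))$ by parts elementwise yields
\begin{align*}
\widetilde{\cR}_1(\bxi-\bPi_h(\bxi)) &= \int_{\Gamma_\rD}(\bu_\rD-\bu_h)\cdot (\bxi-\bPi_h(\bxi))\bn \\
&\quad - \sum_{K\in\mathcal{T}_h}\int_K\Big(\C^{-1}\bsigma_h+\tfrac{\alpha}{d\lambda+2\mu}p_h\mathbf{I}+\brho_h-\bnabla\bu_h\Big):(\bxi-\bPi_h(\bxi)) \\
&\quad - \sum_{E\in\mathcal{E}_h(\Omega)}\int_E [\![\bu_h]\!]\cdot(\bxi-\bPi_h(\bxi))\bn_E,
\end{align*}
where I have already used that the $\Gamma_\rN$ contribution vanishes (since $\bxi\bn=0$ on $\Gamma_\rN$ from the underlying Helmholtz decomposition and $\bPi_h(\bxi)\bn=0$ on $\Gamma_\rN$), and that, via the Raviart--Thomas moments $\int_E(\bxi-\bPi_h(\bxi))\bn\cdot\bq_h=0$ for all $\bq_h\in\bP_k(E)$, the contribution $\int_{\Gamma_\rD}\bu_h\cdot(\bxi-\bPi_h(\bxi))\bn$ produced by the elementwise IBP can be freely absorbed into the $H(\bxi-\bPi_h(\bxi))$ term. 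The same moment identity, applied now to each interior edge and using $[\![\bu_h]\!]|_E\in\bP_k(E)$, shows that the last sum vanishes identically.

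What remains is bounded by Cauchy--Schwarz elementwise and edgewise, together with the local approximation estimates $\|\bxi-\bPi_h(\bxi)\|_{0,K}\lesssim h_K\|\bxi\|_{1,K}$ and $\|(\bxi-\bPi_h(\bxi))\bn\|_{0,E}\lesssim h_E^{1/2}\|\bxi\|_{1,K_E}$ (with $K_E$ an element containing $E$). Summing elementwise and taking a global Cauchy--Schwarz in $K$ and $E$ produces exactly the two $h$-weighted square sums in the claim, which combined with the $c$-term estimate closes the argument. The main technical obstacle is to arrange the integration by parts so that every surviving edge quantity is a polynomial of degree $\le k$ against which the Raviart--Thomas moments can be invoked; once these cancellations are in place, the final step is a routine application of Cauchy--Schwarz and standard interpolation bounds.
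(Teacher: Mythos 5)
Your proposal is correct and follows essentially the same route as the paper: add and subtract $c(\bxi-\bPi_h(\bxi),p_h)$, rewrite $\cR_1$ via the continuous equation \eqref{eq:2.2a}, integrate $\int_\Omega\bu_h\cdot\bdiv(\bxi-\bPi_h(\bxi))$ by parts elementwise, invoke the Raviart--Thomas edge-moment identity to cancel the interior jump terms and to combine $H$ with the $\Gamma_\rD$ boundary contribution into $\bu_\rD-\bu_h$, and finish with Cauchy--Schwarz and the local approximation properties of $\bPi_h$. The only cosmetic slip is attributing the vanishing of the $\Gamma_\rN$ edge terms to ``$\bxi\bn=0$ on $\Gamma_\rN$ from the Helmholtz decomposition''---the statement of the lemma only assumes $\bxi\in\bbH^1(\Omega)$, and the same moment identity you use on interior edges (against $\bu_h|_E\in\bP_k(E)$) is the cleaner and more general justification there as well.
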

\begin{proof}
From the definition of $\cR_1$ (cf. \eqref{eq:def-R-1}), adding $\pm\, c\, (\bxi-\bPi_h(\bxi),p_h)$, and using equation \eqref{eq:2.2a}, we have
\begin{equation*}
\begin{array}{ll}
\ds \cR_1(\bxi-\bPi_h(\bxi))= H(\bxi-\bPi_h(\bxi)) - c(\bxi-\bPi_h(\bxi),p) -a(\bsigma_h,\bxi-\bPi_h(\bxi)) 
-b(\bxi-\bPi_h(\bxi),(\bu_h,\brho_h))\\
\ds \quad = \langle (\bxi-\bPi_h(\bxi))\bn,\bu_\rD\rangle_{\Gamma_\rD} - \dfrac{\alpha}{d\lambda+2\mu} \int_{\Omega} (p-p_h)\, \tr(\bxi-\bPi_h(\bxi)) - \dfrac{\alpha}{d\lambda+2\mu} \int_{\Omega} p_h\, \tr(\bxi-\bPi_h(\bxi))\\
\ds \qquad - \int_{\Omega} \C^{-1}\bsigma_h:(\bxi-\bPi_h(\bxi)) - \int_{\Omega} \brho_h:(\bxi-\bPi_h(\bxi))  - \int_{\Omega} \bu_h\cdot\bdiv(\bxi-\bPi_h(\bxi)),
\end{array}
\end{equation*}
then, applying a local integration by parts to the last term above, using the identity
$\int_E \bPi_h(\btau)\bn\cdot\bxi=\int_E \btau\bn\cdot \bxi$, for all $\bxi\in \bP_k(E)$, for all edge $E$ of $\mathcal T_h$, the fact that $\bu_\rD\in \bL^2(\Gamma_\rD)$, and the Cauchy-Schwarz inequality, we obtain
\begin{equation*}
\begin{array}{ll}
\ds \cR_1(\bxi-\bPi_h(\bxi)) = \sum_{K\in\mathcal{T}_h} \int_K (-\C^{-1}\bsigma_h - \dfrac{\alpha}{d\lambda+2\mu} p_h \mathbf{I} + \bnabla \bu_h -\brho_h):(\bxi-\bPi_h(\bxi))\\
\qquad \qquad \ds +\sum_{E \in \mathcal{E}_h(\Gamma_\rD)}\langle (\bxi-\bPi_h(\bxi))\bn,\bu_\rD - \bu_h\rangle_E   - \dfrac{\alpha}{d\lambda+2\mu} \int_{\Omega} (p-p_h)\, \tr(\bxi-\bPi_h(\bxi))\\
\qquad \ds \leq \sum_{K\in\mathcal{T}_h} \|\C^{-1}\bsigma_h + \dfrac{\alpha}{d\lambda+2\mu} p_h \mathbf{I} - \bnabla \bu_h +\brho_h\|_{0,K} \|\bxi-\bPi_h(\bxi)\|_{0,K}\\
\qquad \qquad \ds +\sum_{E \in \mathcal{E}_h(\Gamma_\rD)} \|\bu_\rD - \bu_h\|_{0,E} \|\bxi-\bPi_h(\bxi)\|_{0,E} + \gamma \|p-p_h\|_{0,\Omega}\|\bxi-\bPi_h(\bxi)\|_{0,\Omega},
\end{array}
\end{equation*}
with $\gamma$ defined as in \eqref{eq:def-gamma}. Therefore, using the approximations properties of $\bPi_h$ (see, e.g., \cite[Section~3]{alvarez16}) and the Cauchy--Schwarz inequality, we obtain the desired result.
\end{proof}
\begin{lemma}\label{eq:rel-bound2}
Let $\bchi \in \bH^1_{\Gamma_\rN}(\Omega):= \{\bw\in \bH^1(\Omega):\, \bw = \bzero \ \text{on}\ \Gamma_\rN\}$ and assume that $\bu_\rD\in \bH^1(\Gamma_\rD)$. Then,  there exists $C_4>0$, independent of $h$, such that 
\begin{equation*}
\begin{array}{ll}
\ds  |\cR_1(\bcurl (\bchi - \bI_h \bchi) )| 
 \leq C_4 \gamma \| p- p_h||_{0,\Omega} \|\bchi\|_{1,\Omega} \\[2ex]
\ds + C_4\biggr(\,\sum_{K\in\mathcal{T}_h} h_K^2||\underline{\mathbf{curl}}(\C^{-1}\bsigma_h + \dfrac{\alpha}{d\lambda+2\mu} p_h \mathbf{I}  +\brho_h)||_{0,K}^2 \\[2ex]
\ds\qquad +\sum_{E\in \mathcal{E}_h(\Omega)}h_E ||[\![(\C^{-1}\bsigma_h + \dfrac{\alpha}{d\lambda+2\mu} p_h \mathbf{I} +\brho_h)\bs]\!]||_{0,E}^2\\[2ex]
\ds \qquad+\sum_{E\in \mathcal{E}_h(\Gamma_\rD)}h_E ||(C^{-1}\bsigma_h + \dfrac{\alpha}{d\lambda+2\mu} p_h \mathbf{I}  +\brho_h)\bs -\dfrac{\mathrm{d} \bu_\rD}{\mathrm{d}\bs}||_{0,E}^2\biggr)^{1/2} \|\bchi\|_{1,\Omega}.
\end{array}
\end{equation*}
\end{lemma}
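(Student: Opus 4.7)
The plan is to exploit the algebraic structure of $\bcurl(\bchi - \bI_h\bchi)$ as a test function in $\cR_1$, integrate by parts elementwise, and relate the resulting boundary contributions to the data $\bu_\rD$ via tangential integration by parts on $\Gamma_\rD$. Writing $\bw := \bchi - \bI_h\bchi$, I first observe that $\bdiv\,\bcurl(\bw) = \bzero$ and $\bw|_{\Gamma_\rN} = \bzero$ (since $\bchi\in\bH^1_{\Gamma_\rN}(\Omega)$ and $\bI_h$ preserves this trace), so $\bcurl(\bw)\in\bbH_\rN(\bdiv;\Omega)$ is a legitimate test function, and the $(\bu-\bu_h)$-contribution to $b(\bcurl(\bw),(\bu-\bu_h,\brho-\brho_h))$ vanishes. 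Using \eqref{eq:2.2a} to replace $a(\bsigma,\cdot)+b(\cdot,(\bu,\brho))+c(\cdot,p)$ by $H(\cdot)$, we obtain
\begin{equation*}
\cR_1(\bcurl(\bw)) = \langle\bcurl(\bw)\bn,\bu_\rD\rangle_{\Gamma_\rD} - c(\bcurl(\bw),p-p_h) - \int_\Omega \bA_h:\bcurl(\bw),
\end{equation*}
where $\bA_h := \C^{-1}\bsigma_h + \tfrac{\alpha}{d\lambda+2\mu}p_h\mathbf{I} + \brho_h$ is exactly the tensor appearing under $\underline{\bcurl}$ in the desired bound.

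Next, I would perform an elementwise integration by parts on $\int_\Omega\bA_h:\bcurl(\bw)$ via the row-wise identity $\int_K\bA_h:\bcurl(\bw)=\int_K\underline{\bcurl}(\bA_h)\cdot\bw-\int_{\partial K}(\bA_h\bs)\cdot\bw$; reassembling the skeleton contributions produces a volume residual $\underline{\bcurl}(\bA_h)$, interior edge jumps $[\![(\bA_h\bs)]\!]$, vanishing contributions on $\Gamma_\rN$, and boundary integrals on $\Gamma_\rD$. To match the latter to the data, I use the 2D identity $\bcurl(\bw)\bn=\tfrac{\mathrm{d}\bw}{\mathrm{d}\bs}$ followed by tangential integration by parts along each connected component of $\Gamma_\rD$; the endpoint contributions at $\overline{\Gamma_\rD}\cap\overline{\Gamma_\rN}$ vanish because $\bw$ is continuous and equals $\bzero$ there. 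This converts $\langle\bcurl(\bw)\bn,\bu_\rD\rangle_{\Gamma_\rD}$ into $-\int_{\Gamma_\rD}\tfrac{\mathrm{d}\bu_\rD}{\mathrm{d}\bs}\cdot\bw$, which combines with the $\Gamma_\rD$-edge contribution to give precisely $\sum_{E\in\mathcal{E}_h(\Gamma_\rD)}\int_E(\bA_h\bs-\tfrac{\mathrm{d}\bu_\rD}{\mathrm{d}\bs})\cdot\bw$.

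To close the estimate, I would bound $|c(\bcurl(\bw),p-p_h)|\leq\gamma\|p-p_h\|_{0,\Omega}|\bw|_{1,\Omega}\lesssim\gamma\|p-p_h\|_{0,\Omega}\|\bchi\|_{1,\Omega}$ using $\|\bcurl(\bw)\|_{0,\Omega}\leq|\bw|_{1,\Omega}$ together with the $\rH^1$-stability of $\bI_h$, and apply Cauchy--Schwarz with the standard local Clément approximation bounds $\|\bw\|_{0,K}\lesssim h_K\|\bchi\|_{1,\Delta_K}$ and $\|\bw\|_{0,E}\lesssim h_E^{1/2}\|\bchi\|_{1,\Delta_E}$ on the volume residual, the interior jump term, and the Dirichlet-edge residual, invoking finite overlap of patches to reach the global $\|\bchi\|_{1,\Omega}$. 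The main technical obstacle I anticipate is the tangential integration by parts along $\Gamma_\rD$: the assumption $\bu_\rD\in\bH^1(\Gamma_\rD)$ is needed to make sense of $\tfrac{\mathrm{d}\bu_\rD}{\mathrm{d}\bs}$ in $\rL^2(\Gamma_\rD)$, and the vanishing of $\bw$ at the corners $\overline{\Gamma_\rD}\cap\overline{\Gamma_\rN}$ is what eliminates endpoint contributions and produces the consolidated residual $\bA_h\bs-\tfrac{\mathrm{d}\bu_\rD}{\mathrm{d}\bs}$ matching the form of the estimator.
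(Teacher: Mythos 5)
Your proposal is correct and follows essentially the same route as the paper's proof: substituting equation \eqref{eq:2.2a}, elementwise integration by parts of $\int_\Omega \bA_h:\bcurl(\bw)$ to produce the volume $\underline{\bcurl}$ residual, interior jumps, and $\Gamma_\rD$ boundary terms, tangential integration by parts of $\langle\bcurl(\bw)\bn,\bu_\rD\rangle_{\Gamma_\rD}$ via $\bcurl(\bw)\bn=\tfrac{\mathrm{d}\bw}{\mathrm{d}\bs}$, and Cauchy--Schwarz with Cl\'ement approximation bounds. Your packaging (writing $\bA_h$ and killing the $\bu_h$-term up front via $\bdiv\,\bcurl=\bzero$) is slightly tidier, and your explicit remark that the corner contributions vanish because $\bw|_{\Gamma_\rN}=\bzero$ spells out an implicit step of the paper's argument, but the content is the same.
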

\begin{proof}
Similarly to Lemma~\ref{eq:rel-bound1}, adding $\pm\, c\, (\bcurl (\bchi - \bI_h \bchi),p_h)$, we have
\begin{equation*}
\begin{array}{ll}
\cR_1(\bcurl  (\bchi - \bI_h \bchi))\\[1ex]
\quad\ds = H(\bcurl (\bchi - \bI_h \bchi)) - c(\bcurl (\bchi - \bI_h \bchi),p) -a(\bsigma_h,\bcurl (\bchi - \bI_h \bchi)) \\[1ex]
\qquad \ds - b(\bcurl (\bchi - \bI_h \bchi),(\bu_h,\brho_h))\\[1ex]
\ds \quad = \langle (\bcurl (\bchi - \bI_h \bchi))\bn,\bu_\rD\rangle_{\Gamma_\rD} - \dfrac{\alpha}{d\lambda+2\mu} \int_{\Omega} (p-p_h)\, \tr(\bcurl (\bchi - \bI_h \bchi)) \\[2ex]
\ds \quad  - \int_{\Omega} \C^{-1}\bsigma_h:\bcurl (\bchi - \bI_h \bchi) - \int_{\Omega} \brho_h:\bcurl (\bchi - \bI_h \bchi) - \dfrac{\alpha}{d\lambda+2\mu} \int_{\Omega} p_h\, \tr(\bcurl (\bchi - \bI_h \bchi)).
\end{array}
\end{equation*}
Then, applying a local integration by parts, using that $\bu_\rD\in \bH^1(\Gamma_\rD)$, the identity $\langle \bcurl(\bchi-\bI_h \bchi)  \bn ,\bu_\rD \rangle_{\Gamma_\rD}= -\langle \bchi-\bI_h \bchi, \frac{\mathrm{d}\bu_\rD}{\mathrm{d}\bs} \rangle_{\Gamma_\rD}$, and the Cauchy--Schwarz inequality, we obtain
\begin{align*}
\ds \cR_1(\bcurl  (\bchi - \bI_h \bchi)) &= -\sum_{K\in\mathcal{T}_h} \int_K \underline{\bcurl}(\C^{-1}\bsigma_h + \dfrac{\alpha}{d\lambda+2\mu} p_h \mathbf{I} +\brho_h):(\bchi - \bI_h \bchi)\\
\ds & \qquad  +\sum_{E\in \mathcal{E}_h(\Omega)} \int_E [\![(\C^{-1}\bsigma_h + \dfrac{\alpha}{d\lambda+2\mu} p_h \mathbf{I} +\brho_h)\bs]\!]\cdot(\bchi - \bI_h \bchi) \\
\ds &\qquad - \dfrac{\alpha}{d\lambda+2\mu} \int_{\Omega} (p-p_h)\, \tr(\bcurl (\bchi - \bI_h \bchi))\\
\ds &\qquad  +\sum_{E\in \mathcal{E}_h(\Gamma_\rD)} \int_E (\C^{-1}\bsigma_h + \dfrac{\alpha}{d\lambda+2\mu} p_h \mathbf{I} +\brho_h -\bnabla\bu_\rD)\bs \cdot(\bchi - \bI_h \bchi)\\
 &\ds  \leq \sum_{K\in\mathcal{T}_h} \|\underline{\bcurl}(\C^{-1}\bsigma_h + \dfrac{\alpha}{d\lambda+2\mu} p_h \mathbf{I} +\brho_h)\|_{0,K}\|\bchi - \bI_h \bchi\|_{0,K}\\
\ds &\qquad  +\sum_{E\in \mathcal{E}_h(\Omega)} \|[\![(\C^{-1}\bsigma_h + \dfrac{\alpha}{d\lambda+2\mu} p_h \mathbf{I} +\brho_h)\bs]\!]\|_{0,E}\|\bchi - \bI_h \bchi\|_{0,E} \\
&\qquad \ds +\gamma \|p-p_h\|_{0,\Omega}\, \|\bcurl (\bchi - \bI_h \bchi)\|_{0,\Omega}\\
\ds &\qquad  +\sum_{E\in \mathcal{E}_h(\Gamma_\rD)} \| (\C^{-1}\bsigma_h + \dfrac{\alpha}{d\lambda+2\mu} p_h \mathbf{I} +\brho_h)\bs -\dfrac{\mathrm{d} \bu_\rD}{\mathrm{d}\bs}\|_{0,E} \|\bchi - \bI_h \bchi\|_{0,E}.
\end{align*}
As in the previous result, the approximation properties of the  Cl\'ement interpolation (see, e.g., \cite[Section~3]{alvarez16}) in conjunction with the Cauchy--Schwarz inequality, produces the desired result.
\end{proof}

The following lemma establishes the desired estimate for $\cR_1$.

\begin{lemma}\label{lem:cota-R1}
Assume that there exists a convex domain $B$ such that $\Omega\subseteq B$ and $\Gamma_\rN\subseteq \partial B$, and that $\bu_\rD\in \bH^1(\Gamma_\rD)$. Then, there exists a constant $C_5>0$, independent of $h$, such that
\begin{equation*}
\|\bsigma-\bsigma_h\|_{\bdiv;\Omega} + \|\bu-\bu_h\|_{0,\Omega} + \|\brho-\brho_h\|_{0,\Omega}\,\leq\, C_5 \Bigg\{\,\sum_{T\in\mathcal{T}_{h}}\Xi_{s,K}^2\Bigg\}^{1/2} + C_5\gamma\|p-p_h\|_{0,\Omega}.
\end{equation*}
\end{lemma}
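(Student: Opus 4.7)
The plan is to chain the abstract bound of Lemma~\ref{lem-res-1} with a Helmholtz decomposition of $\bbH_\rN(\bdiv;\Omega)$, so that the only remaining task is to estimate $\|\cR_1\|$ by the local indicators in $\Xi_{s,K}$. First, I would invoke Lemma~\ref{lem-res-1} to obtain
\[
\|\bsigma-\bsigma_h\|_{\bdiv;\Omega}+\|\bu-\bu_h\|_{0,\Omega}+\|\brho-\brho_h\|_{0,\Omega}
\le C_1\bigl(\|\cR_1\|+\|\fb+\bdiv\bsigma_h\|_{0,\Omega}+\|\bsigma_h-\bsigma_h^{\tt t}\|_{0,\Omega}\bigr),
\]
and observe that the last two terms are already controlled by $\bigl(\sum_K \Xi_{s,K}^2\bigr)^{1/2}$ from the definition~\eqref{eq:res-1}. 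Hence the task reduces to bounding $\|\cR_1\|$.

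Next, for an arbitrary $\btau\in\bbH_\rN(\bdiv;\Omega)$ I would invoke the stable Helmholtz decomposition from \cite[Lemma~3.9]{alvarez16}, which under the stated convexity hypothesis on $B$ (with $\Omega\subseteq B$ and $\Gamma_\rN\subseteq\partial B$) yields
\[
\btau=\bxi+\bcurl\bchi,\qquad \bxi\in\bbH^1(\Omega),\ \bchi\in\bH^1_{\Gamma_\rN}(\Omega),
\]
with $\|\bxi\|_{1,\Omega}+\|\bchi\|_{1,\Omega}\lesssim\|\btau\|_{\bdiv;\Omega}$. Using this and the property $\cR_1(\btau_h)=-c(\btau_h,p-p_h)$ for all $\btau_h\in\bbH^{\bsigma}_h$ (cf.\ Lemma~\ref{lem-res-1}), I would set $\btau_h:=\bPi_h\bxi+\bcurl(\bI_h\bchi)\in\bbH^{\bsigma}_h$ and split
\[
\cR_1(\btau)=\cR_1(\bxi-\bPi_h\bxi)+\cR_1\bigl(\bcurl(\bchi-\bI_h\bchi)\bigr)+\cR_1(\btau_h),
\]
where the last summand is $-c(\btau_h,p-p_h)$, controlled by $\gamma\,\|p-p_h\|_{0,\Omega}\|\btau\|_{\bdiv;\Omega}$ after using the boundedness of the interpolants in $\bdiv$-norm.

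The two remaining summands are precisely what Lemmas~\ref{eq:rel-bound1} and~\ref{eq:rel-bound2} were designed for: they provide bounds by (a) the element residual $\|\C^{-1}\bsigma_h+\tfrac{\alpha}{d\lambda+2\mu}p_h\mathbf{I}-\bnabla\bu_h+\brho_h\|_{0,K}$ and the Dirichlet edge term $\|\bu_\rD-\bu_h\|_{0,E}$, and (b) the $\underline{\bcurl}$ volumetric term, the interelement jump $[\![(\mathcal{C}^{-1}\bsigma_h+\tfrac{\alpha}{d\lambda+2\mu}p_h\mathbf{I}+\brho_h)\bs]\!]$, and the Dirichlet tangential trace, each multiplied by the appropriate power of $h_K$ or $h_E$, together with a lower-order contribution of the form $\gamma\|p-p_h\|_{0,\Omega}\|\bxi\|_{1,\Omega}$ or $\gamma\|p-p_h\|_{0,\Omega}\|\bchi\|_{1,\Omega}$. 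Collecting these contributions, using Cauchy--Schwarz over $\mathcal T_h$ and $\mathcal E_h$, and absorbing the constants of the Helmholtz decomposition and the interpolation estimates into a single $C_5$, we obtain
\[
\|\cR_1\|\;\le\;C_5\Bigl(\sum_{K\in\mathcal T_h}\Xi_{s,K}^2\Bigr)^{1/2}+C_5\,\gamma\,\|p-p_h\|_{0,\Omega},
\]
and substituting this back into the bound from Lemma~\ref{lem-res-1} yields the stated estimate. The most delicate step is the use of the Helmholtz decomposition: it requires the convexity hypothesis on $B$ to guarantee the $\bH^1$-regularity of $\bxi$ and $\bchi$ and the stability bound, which in turn justifies applying the $\bPi_h$ and $\bI_h$ interpolation/trace estimates that feed Lemmas~\ref{eq:rel-bound1}--\ref{eq:rel-bound2}.
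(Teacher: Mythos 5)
Your proposal is correct and follows essentially the same route as the paper's own proof: invoke Lemma~\ref{lem-res-1}, apply the Helmholtz decomposition of \cite[Lemma~3.9]{alvarez16}, subtract the discrete field $\btau_h=\bPi_h\bxi+\bcurl(\bI_h\bchi)$ using the orthogonality property of $\cR_1$, and then estimate the two interpolation remainders by Lemmas~\ref{eq:rel-bound1} and~\ref{eq:rel-bound2}. You merely spell out a couple of steps the paper leaves implicit (absorbing the two zero-order residual terms directly into $\Xi_{s,K}$, and bounding $|c(\btau_h,p-p_h)|$ via stability of the decomposition and interpolants), which is fine.
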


\begin{proof}
Let $\btau \in \bbH_\rN(\bdiv;\Omega)$.  From \cite[Lemma 3.9]{alvarez16}, there exist $\bxi \in \bbH^1(\Omega)$ and $\bchi \in \bH_{\Gamma_\rN}^1(\Omega)$, such that
\begin{equation}\label{eq:descHelm-1}
\btau=\bxi + \bcurl \bchi \qan  \|\bxi\|_{1,\Omega}+\|\bchi\|_{1,\Omega}\leq   C_{\mathrm{Helm}}\|\btau\|_{\bdiv; \Om},   
\end{equation}
Using that $\cR_1(\btau_h)=c(\btau_h,p_h-p)$ for all $\btau_h\in\bbH^{\bsigma}_{h}$, we have
$$\cR_1(\btau)=\cR_1(\btau-\btau_h) + c(\btau_h,p-p_h) \quad \forall\,\btau_h\in\bbH^{\bsigma}_{h}.$$
In particular, this holds for $\btau_h$ defined as $\ds\btau_h=\bPi_h\bxi+\bcurl(\bI_h \bchi)$, whence
$$\cR_1(\btau)=\cR_1(\bxi-\bPi_h\bxi)+\cR_1(\curl(\bchi-\bI_h \bchi)) + c(\btau_h,p-p_h).$$
Hence, the proof follows from Lemmas~\ref{lem-res-1}, \ref{eq:rel-bound1} and \ref{eq:rel-bound2}, and estimate \eqref{eq:descHelm-1}.
\end{proof}

The following lemma establishes the estimate for $\cR_2$.

\begin{lemma}\label{lem:cota-R2}
Assume that there exists a convex domain $B$ such that $\Omega\subseteq B$ and $\Gamma_\rN\subseteq \partial B$, and that $p_\rD\in \rH^1(\Gamma_\rD)$. Then, there exists a constant $C_8>0$, independent of $h$, such that
\begin{equation*}
\|\bvarphi-\bvarphi_h\|_{\div;\Omega} + \|p-p_h\|_{0,\Omega}\,\leq\, C_8 \Bigg\{\,\sum_{T\in\mathcal{T}_{h}}\Xi_{f,K}^2\Bigg\}^{1/2} + C_8\big(\|\bvarphi_h\|_{0,\Omega}\|p-p_h\|_{0,\Omega} + \gamma\|\bsigma-\bsigma_h \|_{0,\Omega} \big).
\end{equation*}
\end{lemma}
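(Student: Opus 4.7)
The plan is to proceed analogously to Lemma~\ref{lem:cota-R1}, using Lemma~\ref{lem-res-2} to reduce the task to bounding $\|\cR_2\|$, and then invoking a vector-scalar Helmholtz decomposition. The convexity assumption on $B\supseteq \Omega$ with $\Gamma_\rN\subseteq \partial B$ is exactly what is needed to guarantee (along the lines of \cite{cov2020c}) that every $\bpsi\in\bH_\rN(\div;\Omega)$ admits a splitting
$$\bpsi \,=\, \bxi + \curl\chi, \qquad \bxi \in \bH^1(\Omega),\quad \chi\in \rH^1_{\Gamma_\rN}(\Omega),$$
with $\|\bxi\|_{1,\Omega}+\|\chi\|_{1,\Omega}\,\leq\, C_{\mathrm{Helm}}\|\bpsi\|_{\div;\Omega}$. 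Setting $\bpsi_h:=\Pi_h\bxi+\curl(I_h\chi)\in \bH^{\bvarphi}_h$, I would decompose the residual as $\cR_2(\bpsi)=\cR_2(\bxi-\Pi_h\bxi)+\cR_2(\curl(\chi-I_h\chi))+\cR_2(\bpsi_h)$ and treat each summand separately.

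The two interpolation-error terms are controlled by arguments mirroring Lemmas~\ref{eq:rel-bound1}--\ref{eq:rel-bound2} but in the vector-scalar setting. For $\cR_2(\bxi-\Pi_h\bxi)$, I substitute the continuous equation $\wt a_{\bsigma,p}(\bvarphi,\cdot)+\wt b(\cdot,p)=\wt H$, add and subtract $\wt a_{\bsigma_h,p_h}(\bvarphi_h,\cdot)$, and integrate by parts locally; the Raviart--Thomas moment-preserving property $\int_E(\bxi-\Pi_h\bxi)\cdot\bn\,q=0$ eliminates interior jumps of $p_h$ and leaves element residuals $\|\bkappa(\bsigma_h,p_h)^{-1}\bvarphi_h-\nabla p_h\|_{0,K}$ and Dirichlet residuals $\|p_\rD-p_h\|_{0,E}$. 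For $\cR_2(\curl(\chi-I_h\chi))$, local integration by parts together with the tangential identity $\curl(\chi-I_h\chi)\cdot\bn=-\tfrac{\rd(\chi-I_h\chi)}{\rd\bs}$ on $\partial\Omega$ yields the curl-residual $\|\rot(\bkappa(\bsigma_h,p_h)^{-1}\bvarphi_h)\|_{0,K}$, the interior edge jumps $\|[\![(\bkappa(\bsigma_h,p_h)^{-1}\bvarphi_h)\cdot\bs]\!]\|_{0,E}$, and the Dirichlet edge residuals $\|\bkappa(\bsigma_h,p_h)^{-1}\bvarphi_h\cdot\bs-\tfrac{\rd p_\rD}{\rd\bs}\|_{0,E}$. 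The regularity $p_\rD\in \rH^1(\Gamma_\rD)$ is used precisely to make sense of this last tangential derivative. Standard Cl\'ement/Raviart--Thomas approximation estimates, applied together with the Helmholtz stability bound above, then produce exactly the $h$-weighted norms appearing in \eqref{eq:res-2}.

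The novelty with respect to the linear case is the treatment of the terms produced by the nonlinear Galerkin mismatch. Every time $\wt a_{\bsigma,p}(\bvarphi_h,\cdot)$ is replaced by $\wt a_{\bsigma_h,p_h}(\bvarphi_h,\cdot)$ in the steps above, and in particular in the third summand
$$\cR_2(\bpsi_h)\,=\,\int_\Omega\big(\bkappa(\bsigma_h,p_h)^{-1}-\bkappa(\bsigma,p)^{-1}\big)\,\bvarphi_h\cdot\bpsi_h,$$
the Lipschitz hypothesis \eqref{prop-kappa} controls the correction by $\kappa_2\|\bvarphi_h\|_{0,\Omega}\|p-p_h\|_{0,\Omega}\|\bpsi\|_{\div;\Omega}$, which contributes exactly the cross term $\|\bvarphi_h\|_{0,\Omega}\|p-p_h\|_{0,\Omega}$ in the target inequality. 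Combining these bounds with Lemma~\ref{lem-res-2} produces the claim. I expect the main obstacle to be the careful bookkeeping of the boundary contributions under mixed conditions on $\Gamma_\rD$ and $\Gamma_\rN$; this is exactly where the convexity of $B$ is indispensable, since otherwise the Helmholtz decomposition with the partial $\Gamma_\rN$-condition used above is not available in the form required.
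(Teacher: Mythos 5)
Your proposal is correct and follows essentially the same route as the paper: Helmholtz decomposition of $\bH_\rN(\div;\Omega)$ from \cite{cov2020c}, local integration by parts and interpolation estimates for the two interpolation-error pieces, and Lipschitz continuity of $\bkappa^{-1}$ to absorb the nonlinear mismatch into the cross term $\|\bvarphi_h\|_{0,\Omega}\|p-p_h\|_{0,\Omega}$, all combined with Lemma~\ref{lem-res-2}. The one point where you diverge, and in fact improve on the write-up, is the third summand: the paper asserts $\cR_2(\bpsi_h)=0$ for all $\bpsi_h\in\bH^{\bvarphi}_h$ and therefore drops this term from the decomposition, but as you correctly observe this Galerkin orthogonality fails here because $\wt a_{\bsigma,p}\neq\wt a_{\bsigma_h,p_h}$; the residual on discrete test functions is exactly the permeability mismatch $\int_\Omega(\bkappa(\bsigma_h,p_h)^{-1}-\bkappa(\bsigma,p)^{-1})\bvarphi_h\cdot\bpsi_h$, which must be retained and bounded via \eqref{prop-kappa}. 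Your explicit treatment of this term (together with the stability $\|\bpsi_h\|_{0,\Omega}\lesssim\|\bpsi\|_{\div;\Omega}$ inherited from the Helmholtz bound and the interpolation operators) closes the gap cleanly and reproduces the stated cross term, so the final inequality is identical; the paper recovers the same contribution indirectly when replacing $\wt a_{\bsigma,p}(\bvarphi_h,\cdot)$ by $\wt a_{\bsigma_h,p_h}(\bvarphi_h,\cdot)$ inside the bounds for the two interpolation residuals, so the two arguments agree in substance even though the paper's stated orthogonality is, strictly, misstated.
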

\begin{proof}
It follows the steps of Lemma \ref{lem:cota-R1}. From \cite[Lemma~4.4]{cov2020c}, we have that for all $\bpsi\in \bH_\rN(\div;\Omega)$ there exist $\bz \in \bH^1(\Omega)$ and $\phi \in \rH_{\Gamma_\rN}^1(\Omega)$, such that $\bpsi=\bz + \curl \phi$ and $\|\bz\|_{1,\Omega}+\|\phi\|_{1,\Omega}\leq   \wt C_{\mathrm{Helm}}\|\bpsi\|_{\bdiv; \Om}$.
Thus, proceeding similarly to Lemmas \ref{eq:rel-bound1} and \ref{eq:rel-bound2}, applying local integration by parts and the approximation properties of $\Pi_h$ and $I_h$ along with the Cauchy--Schwarz inequality, we can obtain the following estimates
\begin{equation}\label{eq:stimates-R2}
\begin{array}{ll}
|\cR_2(\bz-\Pi_h(\bz))|\,\le  C_6 \,\|\bvarphi_h\|_{0,\Omega}\|p-p_h\|_{0,\Omega}\|\bz\|_{1,\Omega}\\
\qquad \ds + C_6\biggr(\sum_{K\in\mathcal{T}_h} h_K^2\|\bkappa(\bsigma_h,p_h)^{-1}\bvarphi_h- \nabla p_h\|_{0,K}^2 + \sum_{E \in \mathcal{E}_h(\Gamma_\rD)}h_E \|p_\rD - p_h\|_{0,E}^2 \biggr)^{1/2}\|\bz\|_{1,\Omega},\\[4ex]
\ds  |\cR_2(\curl (\phi - I_h \phi) )| 
 \leq C_7 \|\bvarphi_h\|_{0,\Omega}\|p-p_h\|_{0,\Omega}\|\phi\|_{1,\Omega} \\[2ex]
\qquad \ds + C_7\biggr(\,\sum_{K\in\mathcal{T}_h} h_K^2\|\rot(\bkappa(\bsigma_h,p_h)^{-1}\bvarphi_h)\|_{0,K}^2 +\sum_{E\in \mathcal{E}_h(\Omega)}h_E \|[\![(\bkappa(\bsigma_h,p_h)^{-1}\bvarphi_h)\cdot\bs]\!]\|_{0,E}^2\\[2ex]
\qquad \ds +\sum_{E\in \mathcal{E}_h(\Gamma_\rD)}h_E \|\bkappa(\bsigma_h,p_h)^{-1}\bvarphi_h\cdot\bs -\frac{\mathrm{d} p_\rD}{\mathrm{d}\bs} \|_{0,E}^2\biggr)^{1/2} \|\phi\|_{1,\Omega}.
\end{array}
\end{equation}
Then, noting that $\cR_2(\bpsi_h)=0$ for all $\bpsi_h\in \bH^{\bvarphi}_{h}$, and defining $\bpsi_h$ as $\ds\bpsi_h=\Pi_h\bz+\curl(I_h \phi)$, we have
$$\cR_2(\bpsi)=\cR_2(\bpsi-\bpsi_h)=\cR_2(\bz-\Pi_h\bz)+\cR_2(\curl(\phi-I_h \phi)).$$
Hence, the proof follows from Lemma~\ref{lem-res-2}, estimates \eqref{eq:stimates-R2}, and the Helmholtz decomposition of $\bH_\rN(\div;\Omega)$.
\end{proof}

Finally we state the main reliability bound for the proposed estimator.
\begin{theorem} \label{eq:rel-2}
Assume the hypotheses stated in Theorem~\ref{th:cea-estimate} and Lemmas \ref{lem:cota-R1}-\ref{lem:cota-R2}. Let $(\bsigma,\bu,\brho,\bvarphi,p)\in \bbH_\rN(\bdiv;\Omega)\times\bL^2(\Omega)\times\bbLskew\times \bH_\rN(\div;\Omega)\times\rL^2(\Omega)$ and $(\bsigma_h,\bu_h,\brho_h,\bvarphi_h,p_h)\in  \bbH^{\bsigma}_{h}\times\bH^{\bu}_{h}\times\bbH^{\brho}_{h}\times \bH^{\bvarphi}_{h}\times\rH^{p}_{h}$ be the unique solutions of 
\eqref{eq:weak-formulation} and \eqref{eq:weak-formulation-h}, respectively.
Assume further that
\begin{equation}\label{eq:assumption-rel}
(C_5+C_8)\gamma + C_8 C^* (\|g\|_{0,\Omega} + \|p_{\rD}\|_{1/2,\Gamma_\rD} + \|\bu_\rD\|_{1/2,\Gamma_\rD} + \|\fb\|_{0,\Omega} + \gamma \,r\,) \leq\frac{1}{2}.
\end{equation}
Then, there exists $C_{\mathrm{rel}}>0$, independent of $h$, such that 
\begin{align*}
\ds \|\texttt{e}_{\bsigma}\|_{\bdiv;\Omega} + \|\texttt{e}_{\bu}\|_{0,\Omega} + \|\texttt{e}_{\brho}\|_{0,\Omega} + \|\texttt{e}_{\bvarphi}\|_{\div;\Omega} + \|\texttt{e}_{p}\|_{0,\Omega}
\leq\,C_{\mathrm{rel}}\, \Xi.
\end{align*}

\end{theorem}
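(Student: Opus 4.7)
The plan is to combine the two bounds already obtained in Lemmas~\ref{lem:cota-R1} and \ref{lem:cota-R2}, control the discrete factor $\|\bvarphi_h\|_{0,\Omega}$ by the data using the discrete stability estimate from Theorem~\ref{theorem:unique-solution-weak1-h}, and then absorb the remaining error-dependent terms on the right-hand side into the left-hand side using the smallness hypothesis \eqref{eq:assumption-rel}.

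More precisely, first I would add the inequalities in Lemmas~\ref{lem:cota-R1} and \ref{lem:cota-R2}. This yields
\begin{align*}
&\|\texttt{e}_{\bsigma}\|_{\bdiv;\Omega}+\|\texttt{e}_{\bu}\|_{0,\Omega}+\|\texttt{e}_{\brho}\|_{0,\Omega}+\|\texttt{e}_{\bvarphi}\|_{\div;\Omega}+\|\texttt{e}_p\|_{0,\Omega}\\
&\quad\le\,(C_5+C_8)\,\Xi\;+\;C_5\gamma\,\|\texttt{e}_p\|_{0,\Omega}\;+\;C_8\big(\|\bvarphi_h\|_{0,\Omega}\,\|\texttt{e}_p\|_{0,\Omega}+\gamma\,\|\texttt{e}_{\bsigma}\|_{\bdiv;\Omega}\big),
\end{align*}
where I have used $\sqrt{a^2+b^2}\le a+b$ to pass from the elementwise form of $\Xi_{s,K},\Xi_{f,K}$ to $\Xi$ (cf.\ \eqref{eq:estimator}).

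Next I would bound the discrete flux factor. Since $(\bsigma_h,\bu_h,\brho_h,\bvarphi_h,p_h)$ solves \eqref{eq:weak-formulation-h}, the a priori stability estimate \eqref{eq:stability-h} from Theorem~\ref{theorem:unique-solution-weak1-h} gives
\begin{equation*}
\|\bvarphi_h\|_{0,\Omega}\;\le\;\|\bvarphi_h\|_{\div;\Omega}\;\le\;C^*\bigl(\|g\|_{0,\Omega}+\|p_{\rD}\|_{1/2,\Gamma_\rD}+\|\bu_\rD\|_{1/2,\Gamma_\rD}+\|\fb\|_{0,\Omega}+\gamma\,r\bigr).
\end{equation*}
Substituting this into the previous inequality, the coefficient multiplying $\|\texttt{e}_p\|_{0,\Omega}$ becomes $C_5\gamma+C_8 C^*(\|g\|_{0,\Omega}+\|p_{\rD}\|_{1/2,\Gamma_\rD}+\|\bu_\rD\|_{1/2,\Gamma_\rD}+\|\fb\|_{0,\Omega}+\gamma r)$, while the coefficient multiplying $\|\texttt{e}_{\bsigma}\|_{\bdiv;\Omega}$ is $C_8\gamma$. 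Both are bounded above by the left-hand side of \eqref{eq:assumption-rel}, hence by $\tfrac12$.

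Finally, I would absorb the two error-dependent terms on the right into the left-hand side and set $C_{\mathrm{rel}}:=2(C_5+C_8)$ to conclude. The main (and essentially only) subtlety is the bookkeeping step: one must verify that both offending coefficients, $C_5\gamma+C_8\|\bvarphi_h\|_{0,\Omega}$ and $C_8\gamma$, are simultaneously controlled by the single bound in \eqref{eq:assumption-rel}; this is precisely why that hypothesis was stated in the present compact form, and it explains the appearance of the factor $C^*$ and of the data norms inside it.
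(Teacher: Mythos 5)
Your proposal is correct and follows essentially the same route as the paper: add the bounds from Lemmas~\ref{lem:cota-R1} and \ref{lem:cota-R2}, invoke the discrete stability estimate \eqref{eq:stability-h} to bound $\|\bvarphi_h\|_{0,\Omega}$, and absorb the resulting error-dependent terms via hypothesis \eqref{eq:assumption-rel}. One small imprecision: to bound $C_5\Xi_s + C_8\Xi_f$ by $(C_5+C_8)\Xi$ the relevant observation is $\Xi_s,\Xi_f\le\Xi$ (since $\Xi^2=\Xi_s^2+\Xi_f^2$), not $\sqrt{a^2+b^2}\le a+b$, which goes the wrong way; the conclusion is unaffected.
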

\begin{proof}
It follows directly from  Lemmas \ref{lem:cota-R1} and \ref{lem:cota-R2}, using the fact that $\bvarphi_h$ satisfies the estimate \eqref{eq:stability-h}, and the assumption \eqref{eq:assumption-rel}.
\end{proof}
\subsection{Efficiency of the a posteriori error estimator}

In this section we derive the efficiency estimate of the estimator defined in \eqref{eq:estimator}. The main result of this section is stated  as follows. 
\begin{theorem}\label{th:efficiency}
There exists $C_{\mathrm{eff}}>0$, independent of $h$, such that
\begin{equation}\label{eq:efficiency}
C_{\mathrm{eff}}\,\Xi \,\leq\, \|\bsigma-\bsigma_h\|_{\bdiv;\Omega} + \|\bu-\bu_h\|_{0,\Omega} + \|\brho-\brho_h\|_{0,\Omega} + \|\bvarphi-\bvarphi_h\|_{\div;\Omega} + \|p-p_h\|_{0,\Omega} + {\rm h.o.t},
\end{equation}
where ${\rm h.o.t.}$ stands for one or several terms of higher order.
\end{theorem}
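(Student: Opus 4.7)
The plan is to bound each of the local contributions in $\Xi_{s,K}^2$ and $\Xi_{f,K}^2$ individually by local norms of the error using standard Verf\"urth-type residual arguments, and then sum over $K\in\mathcal{T}_h$. A first group of terms is controlled directly from the strong equations: $\fb=-\bdiv\bsigma$ yields $\|\fb+\bdiv\bsigma_h\|_{0,K}=\|\bdiv(\bsigma-\bsigma_h)\|_{0,K}$; the symmetry $\bsigma=\bsigma^{\tt t}$ gives $\|\bsigma_h-\bsigma_h^{\tt t}\|_{0,K}\le 2\|\bsigma-\bsigma_h\|_{0,K}$; and the mass residual in $\Xi_{f,K}^2$ equals $\|(c_0+\tfrac{d\alpha^2}{d\lambda+2\mu})(p-p_h)+\tfrac{\alpha}{d\lambda+2\mu}\tr(\bsigma-\bsigma_h)-\div(\bvarphi-\bvarphi_h)\|_{0,K}$, immediately bounded by local errors.

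For the weighted $L^2$-residuals of the constitutive laws, their curls/rots, and the inter-element jumps, I would use interior triangle bubbles and edge bubbles together with the usual inverse and trace inequalities (see, e.g., \cite{carstensen2000locking,alvarez16,gatica22}). The exact identities
\[
\C^{-1}\bsigma+\tfrac{\alpha}{d\lambda+2\mu}p\,\mathbf{I}+\brho=\bnabla\bu\qan\bkappa(\bsigma,p)^{-1}\bvarphi=\nabla p
\]
show that after testing against a bubble and integrating by parts locally, these residuals reduce to differences of continuous and discrete unknowns plus (in the fluid block) the nonlinear perturbation $\bkappa(\bsigma,p)^{-1}-\bkappa(\bsigma_h,p_h)^{-1}$, controlled by $\kappa_2\|p-p_h\|_{0,K}$ via \eqref{prop-kappa}. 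Because $\bnabla\bu-\brho$ and $\nabla p$ are gradients, their $\underline{\bcurl}$ and $\rot$ vanish, so the curl/rot estimators become the same operators applied to error differences, and the $h_K$-weights are absorbed by inverse inequalities. Inter-element jump indicators follow analogously from the continuity of the exact tangential traces $(\bnabla\bu-\brho)\bs$ and $\nabla p\cdot\bs$.

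The Dirichlet-boundary indicators use $\tfrac{\mathrm{d}\bu_\rD}{\mathrm{d}\bs}=\bnabla\bu\cdot\bs$ and $\tfrac{\mathrm{d}p_\rD}{\mathrm{d}\bs}=\nabla p\cdot\bs$ on $\Gamma_\rD$, which again reduce the tangential-derivative terms to exact-minus-discrete differences amenable to edge-bubble estimates as in \cite[Lemma~6.3]{alvarez16}. The $L^2$-boundary terms $h_E\|\bu_\rD-\bu_h\|_{0,E}^2$ and $h_E\|p_\rD-p_h\|_{0,E}^2$ are not directly controlled by natural norms (since $\bu_h$ and $p_h$ live in $L^2$), and so they are absorbed into ${\rm h.o.t.}$ by inserting $L^2$-projections of the boundary data and invoking data-oscillation bounds, following \cite{gatica22}. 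The main obstacle I anticipate is the nonlinear permeability: to estimate $\|\bkappa(\bsigma_h,p_h)^{-1}\bvarphi_h-\nabla p_h\|_{0,K}$ one adds and subtracts $\bkappa(\bsigma,p)^{-1}\bvarphi_h$ and uses \eqref{prop-kappa} to replace the nonlinearity by $\kappa_2\|p-p_h\|_{0,K}\|\bvarphi_h\|_{\infty,K}$; the discrete stability estimate \eqref{eq:stability-h} under the small-data assumption then provides an $h$-independent factor. Summing the local bounds and taking square roots yields \eqref{eq:efficiency} with $C_{\mathrm{eff}}$ depending on shape-regularity, the polynomial degree $k$, and the problem constants.
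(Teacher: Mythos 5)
Your overall strategy (use the strong form for zero-order residuals, bubble-function localisation for the weighted $L^2$-residuals, inverse/trace estimates for curl/rot terms and inter-element jumps, edge bubbles and the tangential-trace identity on $\Gamma_\rD$, and Lipschitz continuity of $\bkappa^{-1}$ plus the discrete stability bound for the nonlinear perturbation) agrees with the paper's sequence of Lemmas~\ref{lem:cota-eff-1}--\ref{lem:cota-eff-4} followed by \eqref{eq:bound-eff-1}--\eqref{eq:bound-eff-2}. However, there is one genuine gap: your treatment of the boundary terms $h_E\|\bu_\rD-\bu_h\|_{0,E}^2$ and $h_E\|p_\rD-p_h\|_{0,E}^2$.

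You claim these terms ``are not directly controlled by natural norms (since $\bu_h$ and $p_h$ live in $L^2$)'' and must therefore be relegated to ${\rm h.o.t.}$ via data-oscillation arguments. This is incorrect, and the proposed workaround (inserting $L^2$-projections of the boundary data) does not close the argument: a data-oscillation term only controls $\bu_\rD-\pi_E\bu_\rD$, not the remaining polynomial piece $\pi_E\bu_\rD-\bu_h$, which would still have to be traced back to local errors. The paper's Lemma~\ref{lem:cota-eff-4} shows these terms \emph{are} bounded by the natural errors, and the key observation you miss is that although $\bu_h$ is only $\bL^2(\Omega)$-conforming, it is a polynomial on each element and the difference $\bu-\bu_h$ is locally $\bH^1(K_E)$. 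Since $\bu=\bu_\rD$ on $\Gamma_\rD$, a local trace inequality gives $h_E^{1/2}\|\bu_\rD-\bu_h\|_{0,E}\lesssim\|\bu-\bu_h\|_{0,K_E}+h_{K_E}|\bu-\bu_h|_{1,K_E}$, and then the constitutive identity $\bnabla\bu=\C^{-1}\bsigma+\tfrac{\alpha}{d\lambda+2\mu}p\,\mathbf{I}+\brho$ converts $|\bu-\bu_h|_{1,K_E}$ into stress, rotation, and pressure errors plus the already-estimated constitutive residual of Lemma~\ref{lem:cota-eff-2}. The same device works for $p_\rD-p_h$ via $\nabla p=\bkappa(\bsigma,p)^{-1}\bvarphi$. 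With this fix the efficiency bound holds with genuine ${\rm h.o.t.}$ only when the Dirichlet data are not piecewise polynomial, exactly as the paper's closing remark states.
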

We begin with the estimates for the zero order terms appearing in the definition of $\Xi_{s,K}$ (cf. \eqref{eq:res-1}).
\begin{lemma}\label{lem:cota-eff-1}
For all $K\in \cT_h$ there holds
\begin{equation*}
\|\fb +\bdiv \bsigma_h \|_{0,K}\lesssim \|\bsigma-\bsigma_h\|_{\bdiv;K}\, \qan  \|\bsigma_h-\bsigma^{\tt t}_h\|_{0,K}\lesssim  \|\bsigma-\bsigma_h\|_{\bdiv;K}.
\end{equation*}
\end{lemma}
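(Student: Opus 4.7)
The plan is to prove both estimates directly from the governing equations, without invoking any bubble-function or localisation machinery, since both terms involve only algebraic combinations of $\bsigma_h$ with the exact quantities $\fb$ or the symmetry condition.

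For the first estimate, I would use that $\bsigma$ satisfies the equilibrium equation $-\bdiv\bsigma = \fb$ a.e.\ in $\Omega$ (cf.\ \eqref{eq:mom}), so in particular on each $K \in \cT_h$ one has
\begin{equation*}
\fb + \bdiv\bsigma_h \;=\; \bdiv\bsigma_h - \bdiv\bsigma \;=\; -\bdiv(\bsigma - \bsigma_h) \quad \text{in } K.
\end{equation*}
Taking the $\bL^2(K)$-norm and using the obvious bound $\|\bdiv(\bsigma-\bsigma_h)\|_{0,K} \le \|\bsigma-\bsigma_h\|_{\bdiv;K}$ delivers the first inequality.

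For the second estimate, I would exploit the symmetry of the total poroelastic stress $\bsigma = \bsigma^{\tt t}$. Writing
\begin{equation*}
\bsigma_h - \bsigma_h^{\tt t} \;=\; (\bsigma_h - \bsigma) - (\bsigma_h - \bsigma)^{\tt t},
\end{equation*}
and using the triangle inequality together with the fact that transposition is an $\bbL^2$-isometry, I obtain $\|\bsigma_h - \bsigma_h^{\tt t}\|_{0,K} \le 2\|\bsigma - \bsigma_h\|_{0,K} \le 2\|\bsigma-\bsigma_h\|_{\bdiv;K}$.

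There is no real obstacle here; both bounds are immediate consequences of the strong-form equations satisfied by the continuous solution and the definition of the $\bdiv$-norm. The hidden constants are $1$ and $2$, respectively, and in particular are independent of $h$ and of the model parameters.
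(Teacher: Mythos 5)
Your proposal is correct and follows essentially the same route as the paper: the paper also reduces both bounds to the pointwise identities $\fb = -\bdiv\,\bsigma$ and $\bsigma = \bsigma^{\tt t}$ (citing a reference for the argument that the weak $\rL^2$ equations imply these a.e.\ identities) and then the algebraic steps you spell out. The only cosmetic difference is that you attribute the identity to the strong form \eqref{eq:mom} directly, whereas the paper derives it from the weak formulation — worth noting explicitly, since what one actually has in hand is the weak solution, and the a.e.\ equality follows because the test space $\bL^2(\Omega)$ (resp.\ $\bbLskew$) is large enough.
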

\begin{proof}
By employing the same arguments as in \cite[Theorem~3.2]{ccov2022}, we can conclude that $ \fb= -\bdiv\,\bsigma$, which, together with the symmetry of $\bsigma$,  implies the desired result. Further details are omitted.
\end{proof}

In order to derive the upper bounds for the remaining terms
defining the error estimator $\Xi_{s,K}$, we use results from \cite{carstensen1998posteriori}, inverse inequalities, and the localisation technique based on element-bubble and edge-bubble functions. The main properties that we will use can be found in \cite[Lemmas~4.4-4.7]{gatica22}.

\begin{lemma}\label{lem:cota-eff-2}
For all $K\in \cT_h$ there holds
\begin{equation*}
\begin{array}{ll}
h_K\|\mathcal{C}^{-1}\bsigma_h + \dfrac{\alpha}{d\lambda+2\mu} p_h \mathbf{I} - \bnabla \bu_h +\brho_h\|_{0,K}\\
\ds\qquad \lesssim h_K\big(\|\bsigma-\bsigma_h\|_{\bdiv;K}  + \|\brho-\brho_h\|_{0,K} + \|p-p_h\|_{0,K}\big) + \|\bu-\bu_h\|_{0,K}.
\end{array}
\end{equation*}
\end{lemma}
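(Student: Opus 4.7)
The key observation is that the residual we must bound is the discrete counterpart of an identity that holds at the continuous level. Indeed, recall \eqref{eq:constitutive-3}, which states
\[
\C^{-1}\bsigma + \dfrac{\alpha}{d\lambda+2\mu}\, p\, \mathbf{I} - \bnabla \bu + \brho = \mathbf{0} \qin \Omega.
\]
Writing $\mathbf{E}_K := \C^{-1}\bsigma_h + \dfrac{\alpha}{d\lambda+2\mu}\, p_h\, \mathbf{I} - \bnabla \bu_h + \brho_h$ on $K$, subtracting the above identity yields
\[
\mathbf{E}_K = \C^{-1}(\bsigma_h - \bsigma) + \dfrac{\alpha}{d\lambda+2\mu}(p_h - p)\mathbf{I} - \bnabla(\bu_h - \bu) + (\brho_h - \brho) \qin K.
\]
The difficulty is that $\bu - \bu_h$ only belongs to $\bL^2(\Omega)$, so the $\bnabla(\bu_h - \bu)$ term cannot be estimated directly; this is exactly where a bubble-function localisation is required.

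The plan is to test $\mathbf{E}_K$ against $b_K \mathbf{E}_K$, where $b_K$ is the standard element-bubble function associated with $K$ (polynomial of degree $d+1$, vanishing on $\partial K$ and positive inside). Using the equivalence $\|\mathbf{E}_K\|_{0,K}^2 \lesssim \int_K b_K\, \mathbf{E}_K : \mathbf{E}_K$ for polynomial tensors of bounded degree (cf. \cite[Lemma~4.4]{gatica22}), I would substitute the decomposition above to obtain
\[
\|\mathbf{E}_K\|_{0,K}^2 \lesssim \int_K b_K \bigl[\C^{-1}(\bsigma_h-\bsigma) + \tfrac{\alpha}{d\lambda+2\mu}(p_h-p)\mathbf{I} + (\brho_h-\brho)\bigr] : \mathbf{E}_K \;-\; \int_K b_K\, \bnabla(\bu_h - \bu) : \mathbf{E}_K.
\]
The first integral is treated directly by Cauchy--Schwarz and the uniform bound $\|b_K\|_{\infty,K}\le 1$, producing contributions of $\|\bsigma-\bsigma_h\|_{0,K}$, $\gamma\,\|p-p_h\|_{0,K}$ and $\|\brho-\brho_h\|_{0,K}$, each multiplied by $\|\mathbf{E}_K\|_{0,K}$.

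The gradient term is handled by integration by parts on $K$: since $b_K$ vanishes on $\partial K$, the boundary contribution vanishes and we get
\[
-\int_K b_K \bnabla(\bu_h - \bu) : \mathbf{E}_K = \int_K (\bu_h - \bu) \cdot \bdiv(b_K \mathbf{E}_K).
\]
Applying the inverse estimate $\|\bdiv(b_K \mathbf{E}_K)\|_{0,K} \lesssim h_K^{-1}\|\mathbf{E}_K\|_{0,K}$ (again a standard property of bubble-multiplied polynomials, cf. \cite[Lemma~4.5]{gatica22}) and Cauchy--Schwarz yields the bound $h_K^{-1}\|\bu-\bu_h\|_{0,K}\|\mathbf{E}_K\|_{0,K}$. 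Collecting all contributions, dividing by $\|\mathbf{E}_K\|_{0,K}$, and finally multiplying by $h_K$ delivers the asserted inequality. The main technical point is precisely this local integration-by-parts step, which exchanges the unavailable $\bH^1$ control of $\bu - \bu_h$ for an $\bL^2$ bound at the cost of the scaling factor $h_K^{-1}$, thereby producing the $\|\bu-\bu_h\|_{0,K}$ term without the prefactor $h_K$.
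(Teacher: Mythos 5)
Your proposal follows precisely the paper's route: the paper proves this by applying the element-bubble norm-equivalence lemma (Lemma~4.4 of \cite{gatica22}) to the discrete residual, invoking the continuous identity $\C^{-1}\bsigma + \tfrac{\alpha}{d\lambda+2\mu}\,p\,\mathbf{I} = \bnabla\bu - \brho$, and then using the bubble-localised integration by parts together with the inverse estimate (Lemma~4.5 of \cite{gatica22}, following Lemma~6.6 of \cite{carstensen1998posteriori}) to trade the unavailable gradient control of $\bu-\bu_h$ for an $\bL^2$ bound at the cost of $h_K^{-1}$. Your plan is a correct and faithful unpacking of exactly those steps.
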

\begin{proof}
It follows from an application of \cite[Lemma~4.4]{gatica22} with $\bq= \mathcal{C}^{-1}\bsigma_h + \dfrac{\alpha}{d\lambda+2\mu} p_h \mathbf{I} - \bnabla \bu_h +\brho_h$, using that $\C^{-1}\bsigma + \dfrac{\alpha}{d\lambda+2\mu} p \mathbf{I} = \bnabla \bu -\brho$ and 
\cite[Lemma~4.5]{gatica22}. We refer to \cite[Lemma~6.6]{carstensen1998posteriori} for further details.
\end{proof}

\begin{lemma}\label{lem:cota-eff-3}
For all $K\in \cT_h$ and $E\in  \mathcal{E}_h(\Omega)$, there holds
\begin{equation*}
\begin{array}{ll}
h_K\|\underline{\mathbf{curl}}(\mathcal{C}^{-1}\bsigma_h + \dfrac{\alpha}{d\lambda+2\mu} p_h \mathbf{I}  +\brho_h)\|_{0,K} \lesssim \|\bsigma-\bsigma_h\|_{\bdiv;K} + \|\brho-\brho_h\|_{0,K} + \|p-p_h\|_{0,K},\\[3ex]
h_E^{1/2} \|[\![(\mathcal{C}^{-1}\bsigma_h + \dfrac{\alpha}{d\lambda+2\mu} p_h \mathbf{I}  +\brho_h)\bs]\!]\|_{0,E} \lesssim \|\bsigma-\bsigma_h\|_{\bdiv;\omega_E} + \|\brho-\brho_h\|_{0,\omega_E} + \|p-p_h\|_{0,\omega_E},
\end{array}
\end{equation*}
where the patch of elements sharing the edge $E$ is denoted as $\omega_E:=\cup\{K'\in \mathcal{T}_h\colon E\in \mathcal{E}_h(K')\}$.
\end{lemma}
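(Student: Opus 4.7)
The strategy is the standard bubble-function localisation technique of Verf\"urth, as used in \cite[Lemma~6.7]{carstensen1998posteriori} and \cite[Lemmas~4.6--4.7]{gatica22}. The key ingredient is the continuous identity \eqref{eq:constitutive-3}, namely
\begin{equation*}
\mathcal{C}^{-1}\bsigma + \dfrac{\alpha}{d\lambda+2\mu}\,p\,\mathbf{I} + \brho \,=\, \bnabla\bu \qin \Omega,
\end{equation*}
which implies both that $\underline{\mathbf{curl}}$ of the corresponding quantity at the exact solution vanishes, and that its tangential trace is continuous across any interior edge. Setting $\bq_h := \mathcal{C}^{-1}\bsigma_h + \tfrac{\alpha}{d\lambda+2\mu}p_h\mathbf{I} + \brho_h$ and $\bq := \bnabla\bu$, we therefore have $\underline{\mathbf{curl}}\bq_h = \underline{\mathbf{curl}}(\bq_h - \bq)$ and $[\![\bq_h\bs]\!] = [\![(\bq_h-\bq)\bs]\!]$ across any $E\in\mathcal{E}_h(\Omega)$.

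For the first estimate, I would introduce on each $K$ the element-bubble-weighted test $\bv_K := b_K\,\underline{\mathbf{curl}}\bq_h\bigr|_K$, extended by zero outside $K$, and use the standard bubble estimates (of \cite[Lemma~4.5]{gatica22}) to get $\|\bv_K\|_{0,K} \lesssim \|\underline{\mathbf{curl}}\bq_h\|_{0,K}$ and the inverse inequality $\|\bnabla\bv_K\|_{0,K} \lesssim h_K^{-1}\|\underline{\mathbf{curl}}\bq_h\|_{0,K}$. Integration by parts on $K$ (with boundary contributions vanishing since $b_K|_{\partial K} = 0$) gives
\begin{equation*}
\|\underline{\mathbf{curl}}\bq_h\|_{0,K}^2 \,\lesssim\, \int_K \underline{\mathbf{curl}}(\bq_h-\bq):\bv_K \,\lesssim\, \|\bq_h-\bq\|_{0,K}\, h_K^{-1}\|\underline{\mathbf{curl}}\bq_h\|_{0,K},
\end{equation*}
and the first bound follows after multiplying by $h_K$ and using $\|\bq_h-\bq\|_{0,K} \lesssim \|\bsigma-\bsigma_h\|_{0,K} + \|p-p_h\|_{0,K} + \|\brho-\brho_h\|_{0,K}$ via the triangle inequality and the uniform boundedness of $\mathcal{C}^{-1}$.

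For the second estimate, I would employ an edge-bubble function $b_E$ supported in $\omega_E$ and the extension operator $L$ from \cite[Lemma~4.6]{gatica22}. Setting $\bv_E := b_E\,L([\![\bq_h\bs]\!])$ on $\omega_E$ and zero elsewhere, and applying elementwise integration by parts on both $K \in \omega_E$, the interior volume terms produce $\underline{\mathbf{curl}}\bq_h$ on each $K\subset\omega_E$ tested against $\bv_E$, whereas the boundary contributions collapse to the jump tested on $E$. Using the standard scaling $\|\bv_E\|_{0,\omega_E} \lesssim h_E^{1/2}\|[\![\bq_h\bs]\!]\|_{0,E}$ and $\|\bnabla\bv_E\|_{0,\omega_E} \lesssim h_E^{-1/2}\|[\![\bq_h\bs]\!]\|_{0,E}$, together with the identity $[\![\bq_h\bs]\!] = [\![(\bq_h-\bq)\bs]\!]$, we obtain
\begin{equation*}
\|[\![\bq_h\bs]\!]\|_{0,E}^2 \,\lesssim\, \Bigl(h_E^{-1/2}\|\bq_h-\bq\|_{0,\omega_E} + h_E^{1/2}\|\underline{\mathbf{curl}}\bq_h\|_{0,\omega_E}\Bigr)\|[\![\bq_h\bs]\!]\|_{0,E}.
\end{equation*}
Multiplying through by $h_E^{1/2}$ and invoking the already-proved first bound on each $K\subset\omega_E$ (to absorb the $\underline{\mathbf{curl}}\bq_h$ term) delivers the asserted estimate.

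The main technical obstacle is the second bound: unlike the volume residual, it requires an extension of the edge datum into $\omega_E$ with controlled $\rH^1$ scaling and the integration-by-parts argument generates volumetric $\underline{\mathbf{curl}}$ contributions that must be recycled through the first estimate in order to close the inequality. Once that absorption is performed, the dependence on $\|\bsigma-\bsigma_h\|_{\bdiv;\omega_E}$, $\|p-p_h\|_{0,\omega_E}$ and $\|\brho-\brho_h\|_{0,\omega_E}$ emerges naturally from the triangle inequality on $\bq_h-\bq$.
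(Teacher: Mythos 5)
Your proof is correct and unpacks exactly the bubble-localisation argument that the paper invokes tersely by applying Lemma~4.7 of \cite{gatica22} with the choices $\bxi = \C^{-1}\bsigma + \frac{\alpha}{d\lambda+2\mu}p\,\mathbf{I} + \brho = \bnabla\bu$ and $\bxi_h = \C^{-1}\bsigma_h + \frac{\alpha}{d\lambda+2\mu}p_h\,\mathbf{I} + \brho_h$. The element-bubble estimate, the edge-bubble extension with the absorption of the volumetric $\underline{\mathbf{curl}}$ terms via the first bound, and the triangle inequality on $\bq_h-\bq$ are precisely the content of the cited lemma, so the two proofs are essentially identical.
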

\begin{proof}
It suffices to apply \cite[Lemma~4.7]{gatica22} with $\bxi:=\C^{-1}\bsigma + \dfrac{\alpha}{d\lambda+2\mu} p \mathbf{I} +\brho = \bnabla \bu $ and $\bxi_h:=\C^{-1}\bsigma_h + \dfrac{\alpha}{d\lambda+2\mu} p_h \mathbf{I} +\brho_h$.
\end{proof}

\begin{lemma}\label{lem:cota-eff-4}
Assume that $\bu_\rD$ is piecewise polynomial. Then, for all $E\in  \mathcal{E}_h(\Gamma_\rD)$, there holds
\begin{equation*}
\begin{array}{ll}
h_E^{1/2} \|(
\mathcal{C}^{-1}\bsigma_h + \dfrac{\alpha}{d\lambda+2\mu} p_h \mathbf{I} +\brho_h)\bs -\frac{\mathrm{d} \bu_\rD}{\mathrm{d}\bs}\|_{0,E} \lesssim  \|\bsigma-\bsigma_h\|_{\bdiv;K_E}\! +\! \|\brho-\brho_h\|_{0,K_E}\! +\! \|p-p_h\|_{0,K_E},\\[2ex]
h_E^{1/2} \|\bu_\rD-\bu_h\|_{0,E} \lesssim  h_{K_E}\big(\|\bsigma-\bsigma_h\|_{\bdiv;K_E}  + \|\brho-\brho_h\|_{0,K_E} + \|p-p_h\|_{0,K_E}\big) + \|\bu-\bu_h\|_{0,K_E},
\end{array}
\end{equation*}
where $K_E$ is a triangle in $\mathcal{T}_h$  that contains  $E$ on its boundary.
\end{lemma}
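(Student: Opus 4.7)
The strategy exploits two identities: $\bu = \bu_\rD$ on $\Gamma_\rD$, and $\C^{-1}\bsigma + \dfrac{\alpha}{d\lambda+2\mu} p\,\mathbf{I} + \brho = \bnabla\bu$ in $\Omega$ (cf. \eqref{eq:constitutive-3}). Together these allow us to rewrite each residual on $E$ as a difference of continuous and discrete quantities, so that the same edge-bubble localisation and scaled trace machinery used in Lemmas \ref{lem:cota-eff-2}--\ref{lem:cota-eff-3} (following \cite[Lemmas~4.4--4.7]{gatica22}) applies essentially verbatim.

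For the first inequality, on $E \subseteq \Gamma_\rD$ the exact tangential derivative reads
\[
\dfrac{\mathrm{d}\bu_\rD}{\mathrm{d}\bs} \,=\, \bnabla\bu\,\bs \,=\, \Big(\C^{-1}\bsigma + \dfrac{\alpha}{d\lambda+2\mu} p\,\mathbf{I} + \brho\Big)\bs \qon E.
\]
Hence the residual to be bounded equals the tangential trace on $E$ of $\C^{-1}(\bsigma-\bsigma_h) + \dfrac{\alpha}{d\lambda+2\mu}(p-p_h)\,\mathbf{I} + (\brho-\brho_h)$. I would then test against an edge-bubble function $\psi_E$ supported in $K_E$, whose scaling on $E$ and on $K_E$ is encoded in the estimates already recalled in the paper, together with an inverse inequality (which requires $\dfrac{\mathrm{d}\bu_\rD}{\mathrm{d}\bs}$ to be a polynomial on $E$, granted by the hypothesis). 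This delivers
\[
h_E^{1/2}\Big\|\Big(\C^{-1}\bsigma_h + \dfrac{\alpha}{d\lambda+2\mu} p_h\mathbf{I} + \brho_h\Big)\bs - \dfrac{\mathrm{d}\bu_\rD}{\mathrm{d}\bs}\Big\|_{0,E} \,\lesssim\, \|\bsigma-\bsigma_h\|_{0,K_E} + \|\brho-\brho_h\|_{0,K_E} + \|p-p_h\|_{0,K_E},
\]
which implies the asserted bound since $\|\bsigma-\bsigma_h\|_{0,K_E} \le \|\bsigma-\bsigma_h\|_{\bdiv;K_E}$.

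For the second inequality, since $\bu_\rD = \bu$ on $\Gamma_\rD$ we have $\bu_\rD - \bu_h = \bu - \bu_h$ on $E$. A scaled trace inequality applied on the single triangle $K_E$ yields
\[
h_E^{1/2}\|\bu_\rD - \bu_h\|_{0,E} \,\lesssim\, \|\bu-\bu_h\|_{0,K_E} + h_{K_E}\|\bnabla(\bu-\bu_h)\|_{0,K_E}.
\]
To control the gradient term I would again use $\bnabla\bu = \C^{-1}\bsigma + \dfrac{\alpha}{d\lambda+2\mu} p\,\mathbf{I} + \brho$ and add/subtract $\C^{-1}\bsigma_h + \dfrac{\alpha}{d\lambda+2\mu} p_h\,\mathbf{I} + \brho_h$, obtaining
\[
\|\bnabla(\bu-\bu_h)\|_{0,K_E} \,\lesssim\, \|\bsigma-\bsigma_h\|_{0,K_E} + \|p-p_h\|_{0,K_E} + \|\brho-\brho_h\|_{0,K_E} + \Big\|\C^{-1}\bsigma_h + \dfrac{\alpha}{d\lambda+2\mu} p_h\mathbf{I} + \brho_h - \bnabla\bu_h\Big\|_{0,K_E}.
\]
The last norm is bounded by Lemma~\ref{lem:cota-eff-2}, which (dividing its conclusion by $h_{K_E}$) controls it by $\|\bsigma-\bsigma_h\|_{\bdiv;K_E} + \|\brho-\brho_h\|_{0,K_E} + \|p-p_h\|_{0,K_E} + h_{K_E}^{-1}\|\bu-\bu_h\|_{0,K_E}$. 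Multiplying back by $h_{K_E}$ reproduces the right-hand side of the stated estimate.

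The main obstacle is the edge-bubble construction for the first bound on a \emph{boundary} edge, where no jump is available and the localisation must be carried on a single triangle with appropriate scaling. The hypothesis that $\bu_\rD$ is piecewise polynomial is essential here: otherwise the inverse estimate applied to $\dfrac{\mathrm{d}\bu_\rD}{\mathrm{d}\bs}$ produces a higher-order remainder measuring its polynomial approximation error on $E$, which would surface as an extra $\mathrm{h.o.t.}$ term in \eqref{eq:efficiency}. The scalings of $\psi_E$ across $L^2(E)$, $L^2(K_E)$ and the $H^1$-seminorm on $K_E$ must also be tracked so that the $h_E^{1/2}$ prefactor is absorbed with the correct powers of $h_{K_E}$.
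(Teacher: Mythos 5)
Your proof is correct and follows the same route the paper intends, namely rewriting the residuals via the identity $\C^{-1}\bsigma + \tfrac{\alpha}{d\lambda+2\mu} p\,\mathbf{I} + \brho = \bnabla\bu$ and $\bu = \bu_\rD$ on $\Gamma_\rD$, then invoking the boundary edge-bubble localisation of \cite[Lemma~4.18]{gatica22} for the first bound and the scaled ($H^1$) trace inequality of \cite[Lemma~4.6]{gatica22} together with Lemma~\ref{lem:cota-eff-2} for the second; the paper's own proof only cites these references and states the two identities, so your argument is exactly the intended fill-in.
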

\begin{proof}
The first estimate follows as in \cite[Lemma~4.18]{gatica22}, defining $\bxi$ and $\bxi_h$ as in Lemma \ref{lem:cota-eff-3}. On the other hand, the second estimate follows from an application of the discrete trace inequality (see \cite[Lemma~4.6]{gatica22}), using that $\C^{-1}\bsigma + \dfrac{\alpha}{d\lambda+2\mu} p \mathbf{I} = \bnabla \bu -\brho$, and the fact that $\bu=\bu_\rD$ on $\Gamma_\rD$. See also  \cite[Lemma~4.14]{gms2010}.
\end{proof}

A direct application of Lemmas \ref{lem:cota-eff-1}-\ref{lem:cota-eff-4} yields 
\begin{equation}\label{eq:bound-eff-1}
\sum_{K\in\mathcal{T}_h}\Xi_{s,K}\lesssim  \|\texttt{e}_{\bsigma}\|_{\bdiv;\Omega} + \|\texttt{e}_{\bu}\|_{0,\Omega} + \|\texttt{e}_{\brho}\|_{0,\Omega} + \|\texttt{e}_{\bvarphi}\|_{\div;\Omega} + \|\texttt{e}_{p}\|_{0,\Omega}.
\end{equation}
Similarly, using the same arguments as in Lemmas \ref{lem:cota-eff-1}-\ref{lem:cota-eff-4}, along with algebraic manipulations as in Section \ref{sec:error}, assuming that $p_\rD$ is piecewise polynomial, together with the Lipschitz continuity of $\bkappa$ (cf. \eqref{prop-kappa}), we can bound each of the terms that appear in the estimator $\Xi_{f,K}$ and obtain the following result
\begin{equation}\label{eq:bound-eff-2}
\sum_{K\in\mathcal{T}_h}\Xi_{f,K}\lesssim  \|\texttt{e}_{\bsigma}\|_{\bdiv;\Omega} + \|\texttt{e}_{\bu}\|_{0,\Omega} + \|\texttt{e}_{\brho}\|_{0,\Omega} + \|\texttt{e}_{\bvarphi}\|_{\div;\Omega} + \|\texttt{e}_{p}\|_{0,\Omega}.
\end{equation}
We remark that the efficiency of $\Xi$ (cf. \eqref{eq:estimator}) in Theorem \ref{th:efficiency} is now a straightforward consequence of estimates \eqref{eq:bound-eff-1} and \eqref{eq:bound-eff-2}. In turn, we emphasize that the resulting constant, denoted by $C_{\mathrm{eff}}>0$ is independent of $h$.
\begin{remark}
For simplicity, we have assumed that $\bu_\rD$ and $p_\rD$ are piecewise polynomial in the derivation of \eqref{eq:bound-eff-1} and \eqref{eq:bound-eff-2}. However, similar estimates can also be obtained by assuming $\bu_\rD$ and $p_\rD$ are sufficiently smooth (taking, for example, $\bu_\rD\in \bH^1(\Gamma_\rD)$ and $p_\rD\in \rH^1(\Gamma_\rD)$, as in Lemmas~\ref{lem:cota-R1}-\ref{lem:cota-R2}), and proceeding as in \cite[Section~6.2]{cmo2016}. In such cases, higher-order terms, stemming from errors in the polynomial approximations, would appear in \eqref{eq:bound-eff-1} and \eqref{eq:bound-eff-2}, accounting for the presence of ${\rm h.o.t}$ in \eqref{eq:efficiency}.
\end{remark}
\begin{remark}
We conclude this section by noting that the a posteriori error estimation analysis developed here can be adapted to the three-dimensional case. In particular, in \cite[Th~3.2]{gatica2020heldes} and \cite[Lemma~4.4]{cov2020c}, one can find the suitable Helmholtz decompositions for the spaces $\bbH_\rN(\bdiv;\Omega)$ and $\bH_\rN(\div;\Omega)$, respectively.
\end{remark}

\section{Numerical results}\label{sec:numer}
The computational examples in this section  verify the theoretical properties (optimal convergence, conservativity,  \revX{and parameter robustness}) of the proposed schemes. The implementation has been carried out using the FE library FEniCS \cite{alnaes15}. The nonlinear  systems were solved with Newton--Raphson's method with  a residual tolerance of $10^{-7}$, and the linear systems were solved using the sparse LU factorisation of MUMPS \cite{Amestoy2000}. 

\subsection{Optimal convergence to smooth solutions and conservativity in 2D}
We first consider a simple planar problem setup with manufactured exact solution. We take the unit square domain $\Omega = (0,1)^2$, the bottom and left segments represent $\Gamma_\rD$ and the top and right sides are $\Gamma_\rN$. We choose the body load $\boldsymbol{f}$, mass source $g$, boundary displacement $\bu_\rD$, boundary pressure $p_\rD$, as well as (not necessarily homogeneous,  but standard arguments can be used to extend the theory to the inhomogeneous case) boundary data $\bvarphi\cdot\bn = \varphi_\rN$ and \revX{$\bsigma\bn = \bsigma_\rN$}, such that the exact displacement and fluid pressure are 
\[ \bu(x,y) = \frac{1}{20}\begin{pmatrix}
    \cos\bigl[\frac{3\pi}{2}(x+y)\bigr]\\[1ex] \sin\bigl[\frac{3\pi}{2}(x-y)\bigr]
\end{pmatrix}, \quad p(x,y) = \sin(\pi x) \sin (\pi y).\]
These exact primary variables are used to construct exact mixed variables of stress, rotation, and discharge flux. We choose the second constitutive relation for the permeability in \eqref{const-kappa-2} and choose the following arbitrary model parameters (all \revX{nondimensional})
$k_0 = k_1 = c_0 = \alpha = 0.1, \quad \lambda = \mu = \mu_f = 1$.
These values indicate a mild permeability variation and it is expected that the nonlinear solver (in this case, Newton--Raphson) converges in only a few iterations. We construct six levels of uniform mesh refinement of the domain, on which we compute approximate solutions and the associated errors for each primal and mixed variable in their natural norms. Convergence rates are calculated as usual: 
\[
\text{rate} =\log({e}/\widehat{{e}})[\log(h/\widehat{h})]^{-1}\,,\]
where ${e}$ and $\widehat{{e}}$ denote errors produced on two consecutive meshes of sizes $h$ and $\widehat{h}$, respectively. Table \ref{tab:my_label} reports on this error history focusing on the methods defined by the PEERS$_k$ family with $k=0$ and $k=1$, showing a $O(h^{k+1})$ convergence for all unknowns as expected from the theoretical error bound of Theorem~\ref{th:rate1} (except for the rotation approximation that shows a slight superconvergence for the lowest-order case and only in 2D -- a well-known phenomenon associated with PEERS$_k$ elements). With the purpose of illustrating the character of the chosen manufactured solution and the parameter regime, we show sample discrete solutions in Figure~\ref{fig:enter-label}.

\begin{table}[t!]
\setlength{\tabcolsep}{4pt}
    \centering
    \begin{tabular}{|c|rc|cccccccccc|}
    \hline
$k$ & DoFs & $h$ & $e(\bsigma)$ & rate & $e(\bu)$  & rate & $e(\brho)$  & rate & $e(\bvarphi)$  & rate & $e(p)$  & rate \\
    \hline 
     \multirow{6}{*}{0} &  98 & 0.7071 & 3.5e+0 & $\star$ & 4.5e-02 & $\star$ & 3.7e-01 & $\star$ & 4.9e-01 & $\star$ & 2.4e-01 & $\star$\\
&   354 & 0.3536 & 1.9e+0 & 0.86 & 2.0e-02 & 1.20 & 8.0e-02 & 2.19 & 2.6e-01 & 0.92 & 1.3e-01 & 0.92\\
&  1346 & 0.1768 & 9.9e-01 & 0.96 & 1.0e-02 & 0.98 & 3.3e-02 & 1.30 & 1.3e-01 & 0.98 & 6.5e-02 & 0.98\\
&  5250 & 0.0884 & 5.0e-01 & 0.99 & 5.0e-03 & 1.00 & 1.3e-02 & 1.37 & 6.6e-02 & 0.99 & 3.3e-02 & 1.00\\
& 20738 & 0.0442 & 2.5e-01 & 1.00 & 2.5e-03 & 1.00 & 4.7e-03 & 1.43 & 3.3e-02 & 1.00 & 1.6e-02 & 1.00\\
& 82434 & 0.0221 & 1.2e-01 & 1.00 & 1.3e-03 & 1.00 & 1.7e-03 & 1.47 & 1.6e-02 & 1.00 & 8.2e-03 & 1.00\\
 \hline 
 \multirow{6}{*}{1} &          290 & 0.7071 & 1.3e+0 & $\star$ & 1.3e-02 & $\star$ & 4.1e-02 & $\star$ & 1.5e-01 & $\star$ & 7.4e-02 & $\star$\\
&  1090 & 0.3536 & 4.1e-01 & 1.61 & 4.2e-03 & 1.68 & 1.4e-02 & 1.50 & 3.9e-02 & 1.92 & 2.0e-02 & 1.93\\
&  4226 & 0.1768 & 1.1e-01 & 1.94 & 1.1e-03 & 1.95 & 5.8e-03 & 1.30 & 9.9e-03 & 1.98 & 5.0e-03 & 1.98\\
& 16642 & 0.0884 & 2.7e-02 & 1.98 & 2.7e-04 & 1.99 & 2.0e-03 & 1.55 & 2.5e-03 & 1.99 & 1.2e-03 & 2.00\\
& 66050 & 0.0442 & 6.8e-03 & 2.00 & 6.8e-05 & 2.00 & 5.8e-04 & 1.78 & 6.2e-04 & 2.00 & 3.1e-04 & 2.00 \\
& 263170 & 0.0221 & 1.7e-03 & 2.00 & 1.7e-05 & 2.00 & 1.5e-04 & 1.92 & 1.6e-04 & 2.00 & 7.8e-05 & 2.00  \\
\hline
    \end{tabular}
    \caption{Example 1. Error history (degrees of freedom, mesh size, individual errors and experimental rates of convergence) in 2D for the formulation using the two lowest-order FE families with PEERS$_k$ elements.}
    \label{tab:my_label}
\end{table}

\begin{figure}[t!]
    \centering
    \includegraphics[width=0.325\textwidth]{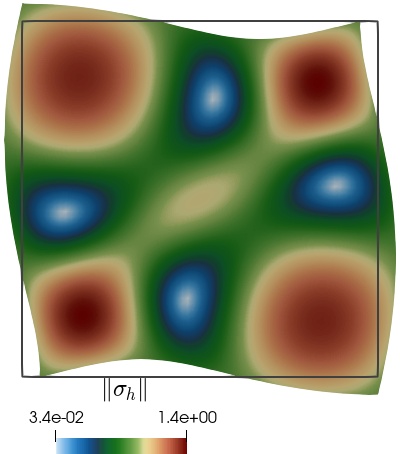}
    \includegraphics[width=0.325\textwidth]{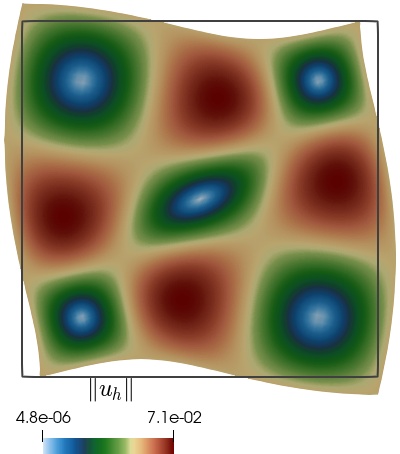}
    \includegraphics[width=0.325\textwidth]{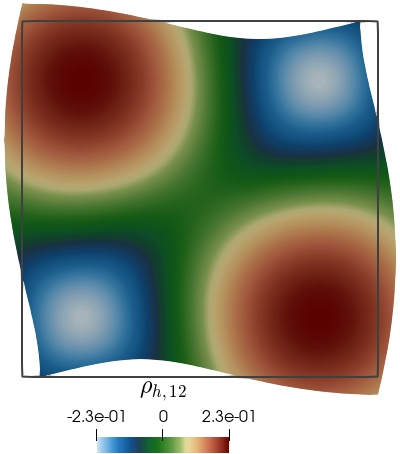}\\
    \includegraphics[width=0.325\textwidth]{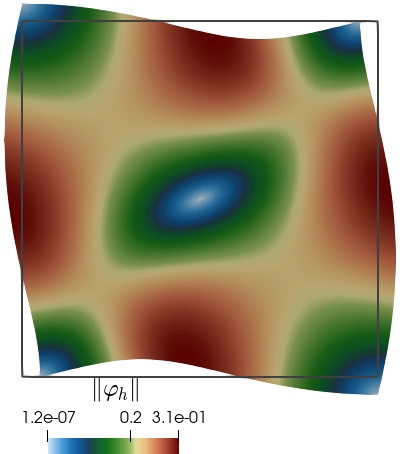}
    \includegraphics[width=0.325\textwidth]{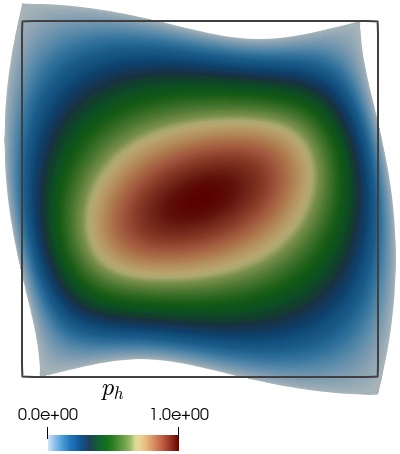}
    \caption{Example 1. Sample of approximate solutions (stress magnitude, displacement magnitude, non-zero entry of rotation, flux magnitude, and fluid pressure) computed with the second-order scheme and plotted on the deformed domain (for reference we also show the contour of the undeformed domain).}
    \label{fig:enter-label}
\end{figure}

We also exemplify the \revX{equilibrium} and mass conservativity of the formulation. To do so we 
represent the loss of \revX{equilibrium} and mass as 
\[
  \revX{\texttt{equ}_h} := \bigl\|\boldsymbol{\mathcal P}_h[\bdiv (\bsigma_h) + \boldsymbol{f}]\bigr\|_{\ell^\infty}, \ 
  \texttt{mass}_h := \biggl\|\mathcal{P}_h\biggl[\bigl(c_0+ \frac{d\alpha^2}{d\lambda + 2\mu}\bigr) p_h + \frac{\alpha}{d\lambda + 2\mu}\tr\bsigma_h + \div(\bvarphi_h) + g\biggr]\biggr\|_{\ell^\infty},
\]
where $\mathcal{P}_h: \mathrm L^2(\Omega)\to \mathrm P_k(\mathcal{T}_h)$ is the scalar version of $\boldsymbol{\mathcal{P}}_h$. 
They are computed at each refinement level and tabulated in Table~\ref{table:conserv} together with the total error $\texttt{e} := e(\bsigma) + e(\bu) + e(\brho) + e(\bvarphi) + e(p)$, and its experimental convergence rate. We report on the nonlinear iteration count as well. The expected optimal convergence of the total error, and the announced local conservativity are confirmed. We also note that, at least for this parameter regime, for all the refinements and polynomial degrees  the nonlinear solver takes  three iterations to get a residual below the tolerance. In the last column of the same table we report on the efficiency of the global a posteriori error estimator designed in Section~\ref{sec:aposteriori} $\texttt{eff}(\Xi) = \frac{\texttt{e}}{\Xi}$, which -- in this case of smooth solutions -- is asymptotically constant (approximately 0.98 for $k=0$ and 1.52 for $k=1$).

\begin{table}[t!]
\centering
\begin{tabular}{|rc|cccccc|}
\hline       
DoFs & $h$ & \texttt{e} & rate & \texttt{equ}$_h$ & \texttt{mass}$_h$ & iter & \texttt{eff}($\Xi$) \\
\hline  
\multicolumn{8}{|c|}{$k=0$}\\
\hline
    97 &  0.707 & 4.83e+0 & $\star$ & 9.30e-16 & 1.94e-16 &  3 & 0.93 \\
   353 &  0.354 & 2.45e+0 & 0.98 & 9.44e-16 & 2.98e-16 &  3 & 0.94 \\
  1345 &  0.177 & 1.25e+0 & 0.98 & 2.64e-15 & 8.26e-16 &  3 & 0.96 \\
  5249 &  0.088 & 6.22e-01 & 1.00 & 5.58e-15 & 2.49e-15 &  3 & 0.97 \\
 20737 &  0.044 & 3.10e-01 & 1.00 & 1.46e-14 & 7.03e-15 &  3 & 0.98 \\
 82433 &  0.022 & 1.55e-01 & 1.00 & 1.72e-13 & 1.09e-13 &  3& 0.98 \\
 328705 & 0.011 & 7.63e-02 & 1.00 & 4.25e-13 & 3.46e-13 & 3 & 0.98 \\
\hline 
\multicolumn{8}{|c|}{$k=1$}\\
\hline
   289 &  0.707 & 1.57e+0 & $\star$ & 2.88e-15 & 1.20e-15 &  3 & 1.45\\
  1089 &  0.354 & 5.08e-01 & 1.63 & 6.77e-15 & 2.25e-15 &  3 & 1.50\\
  4225 &  0.177 & 1.34e-01 & 1.92 & 2.21e-14 & 6.60e-15 &  3& 1.53\\
 16641 &  0.088 & 3.42e-02 & 1.97 & 8.00e-14 & 1.31e-14 &  3& 1.52\\
 66049 &  0.044 & 8.62e-03 & 1.99 & 3.91e-13 & 3.83e-14 &  3& 1.51\\
263169 &  0.022 & 2.16e-03 & 2.00 & 8.20e-12 & 7.61e-14 &  3& 1.51\\
 1050625 &  0.011 & 5.04e-04 & 2.00 & 2.17e-11 & 4.90e-13 &  3& 1.52\\
 \hline 
\end{tabular}
\caption{Example 1. Total error, experimental rates of convergence, $\ell^\infty$-norm of the projected residual of the \revX{equilibrium} and mass balance equations, Newton--Raphson iteration count, and efficiency index of the global a posteriori error estimator. Tabulated results correspond to the two lowest-order polynomial degrees.}
\label{table:conserv}
\end{table}

\subsection{Convergence  in 3D using physically relevant  parameters} Next we investigate the behaviour of the proposed numerical methods in a 3D setting and taking model parameters more closely related to applications in tissue poroelasticity. We still use manufactured solutions to assess the accuracy of the formulation, but take an exact  displacement that satisfies $\div \bu \to 0$ as $\lambda \to \infty$. The domain is the 3D box $\Omega=(0,L)\times(0,L)\times (0,2L)$ with $L=0.01$\,m,  and mixed boundary conditions were taken analogously as before, separating the domain boundary between $\Gamma_\rN$ defined \revX{by} the planes $x=0$, $y=0$ and $z=0$,  and $\Gamma_\rD$ as the remainder of the boundary. The manufactured displacement and pressure head are 
\[ \bu = \frac{L}{4}\begin{pmatrix}
\sin(x/L)\cos(y/L)\sin(z/(2L)) + x^2/\lambda\\
-2\cos(x/L)\sin(y/L)\cos(z/(2L)) + y^2/\lambda\\
2\cos(x/L)\cos(y/L)\sin(z/(2L)) - 2z^2/\lambda    
\end{pmatrix}, \quad p = \sin(x/L)\cos(y/L)\sin(z/(2L)).\]

\begin{table}[t!]
\setlength{\tabcolsep}{4pt}
    \centering
    \begin{tabular}{|rc|cccccccccc|}
      \hline
DoFs & $h$ & $e(\bsigma)$ & rate & $e(\bu)$  & rate & $e(\brho)$  & rate & $e(\bvarphi)$  & rate & $e(p)$  & rate \\
    \hline 
    \multicolumn{12}{|c|}{unity parameters} \\
    \hline
   328 & 0.0173 & 2.9e-02 & $\star$  & 1.1e-06 & $\star$  & 4.8e-04 & $\star$  & 3.1e+0 & $\star$  & 1.3e-04 & $\star$ \\
  2311 & 0.0087 & 1.5e-02 & 0.94 & 5.7e-07 & 0.96 & 1.3e-04 & 1.90 & 1.6e+0 & 1.00 & 6.3e-05 & 1.00\\
 17443 & 0.0043 & 7.5e-03 & 0.99 & 2.8e-07 & 1.00 & 4.4e-05 & 1.54 & 7.8e-01 & 1.00 & 3.2e-05 & 1.00\\
135715 & 0.0022 & 3.8e-03 & 1.00 & 1.4e-07 & 1.00 & 1.8e-05 & 1.34 & 3.9e-01 & 1.00 & 1.6e-05 & 1.00\\
1071043 & 0.0011 & 1.9e-03 & 1.00 & 7.0e-08 & 1.00 & 7.7e-06 & 1.19 & 1.9e-01 & 1.00 & 7.9e-06 & 1.00\\
      \hline 
    \multicolumn{12}{|c|}{physically relevant parameters} \\
    \hline
       328 & 0.0173 & 2.1e+02 & $\star$ & 1.1e-06 & $\star$ & 4.2e-04 & $\star$ & 4.8e-07 & $\star$ & 1.9e-04 & $\star$ \\
  2311 & 0.0087 & 1.1e+02 & 0.91 & 5.6e-07 & 0.99 & 1.1e-04 & 1.93 & 3.9e-07 & 0.55 & 9.1e-05 & 1.05\\
 17443 & 0.0043 & 5.6e+01 & 0.98 & 2.8e-07 & 1.00 & 3.3e-05 & 1.74 & 2.2e-07 & 0.89 & 4.4e-05 & 1.04\\
135715 & 0.0022 & 2.8e+01 & 1.00 & 1.4e-07 & 1.00 & 1.2e-05 & 1.42 & 1.2e-07 & 0.93 & 2.2e-05 & 0.99\\
1071043 & 0.0011 & 1.4e+01 & 1.00 & 7.0e-08 & 1.00 & 5.3e-06 & 1.23 & 6.0e-08 & 0.97 & 1.1e-05 & 0.98\\
\hline
   \end{tabular}
    \caption{Example 2. Error history (degrees of freedom, mesh size, individual errors and experimental rates of convergence) in 3D for the formulation using the  lowest-order FE family with PEERS$_k$ elements and changing from unity (top) to physically relevant (bottom) parameters.}
    \label{tab:my_label2}
\end{table}

First we set again the model parameters to mild values $\lambda = \mu = c_0=k_0=\alpha=\mu_f =1$, $k_1=k_2=0.1$, \revX{we use the exponential permeability constitutive law}, and we compare them against the following values (from, e.g., \cite{barnafi22,ruiz22}) 
\begin{gather*} k_0 = 2.28\times 10^{-11}\,\text{m}^3,\quad k_1 = 5\times10^{-12}\,\text{m}^3, \quad \lambda =1.44\times10^{6}\,\text{Pa},\\  \mu = 9.18\times10^{3}\,\text{Pa}, \quad \mu_f = 7.5\times10^{-4}\,\text{Pa}\cdot\text{s},\quad c_0 = 0, \quad \alpha = 0.99.\end{gather*}Table~\ref{tab:my_label2} reports on the convergence of the method. While the magnitude of the stress  errors is much higher for the second parameter regime, the discharge flux error magnitude is smaller than in the first case and the displacement, rotation, and fluid pressure errors remain roughly of the same magnitude. In any case, the table confirms that the optimal slope of the error decay is not affected by a vanishing storativity nor large Lam\'e constants. The \revX{components of the numerical solution} are displayed in Figure~\ref{fig:enter-label2}.

\begin{figure}[t!]
    \centering
    \includegraphics[width=0.325\textwidth]{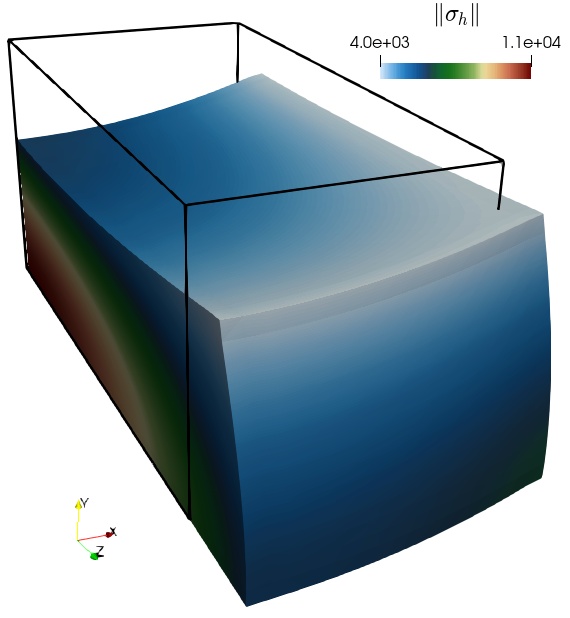}
    \includegraphics[width=0.325\textwidth]{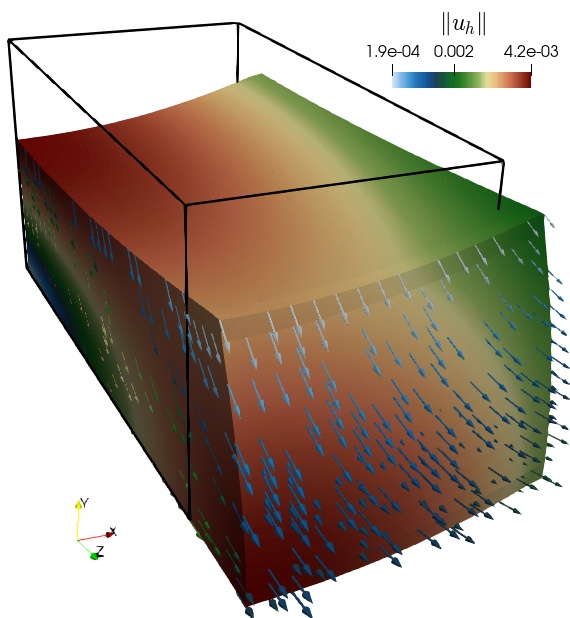}
    \includegraphics[width=0.325\textwidth]{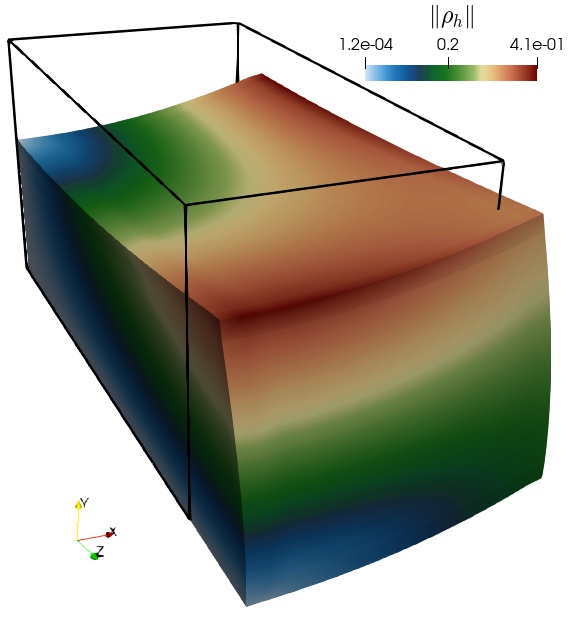}\\
    \includegraphics[width=0.325\textwidth]{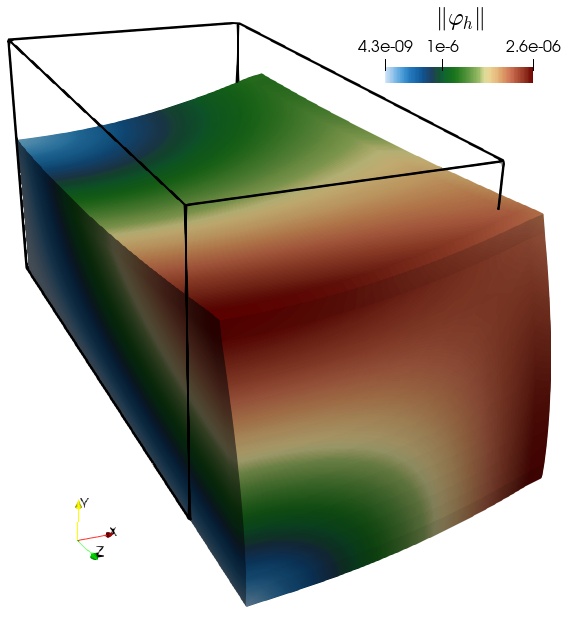}
    \includegraphics[width=0.325\textwidth]{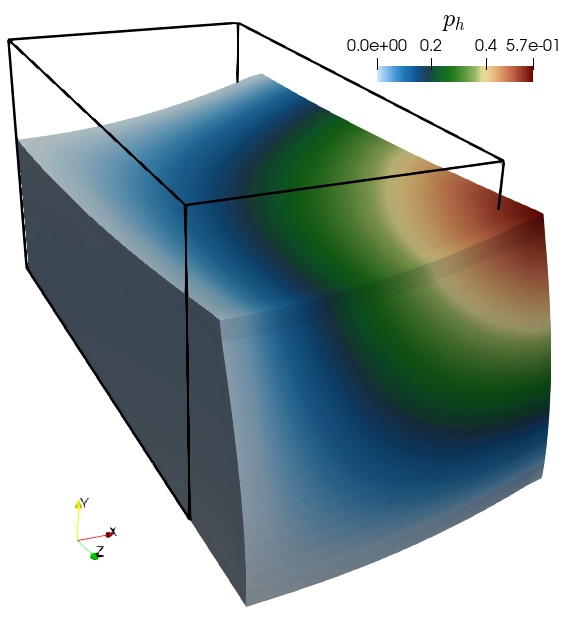}
    \caption{Example 2. Sample of approximate solutions (stress magnitude, displacement magnitude,   rotation magnitude, flux magnitude, and fluid pressure) computed with the first-order scheme and plotted on the deformed domain (for reference we also show the outline of the undeformed domain).}
    \label{fig:enter-label2}
\end{figure}

\subsection{Convergence in the case of adaptive mesh refinement}
We continue with a test targeting the recovery of optimal convergence through adaptive mesh refinement guided by the a posteriori error estimator proposed in Section~\ref{sec:aposteriori}.  
We employ the well-known adaptive mesh refinement approach  of solving, then computing the estimator, marking, and refining. Marking is done as follows \cite{dorfler_sinum96}: a given $K\in \mathcal{T}_h$ is added to the marking set $\mathcal{M}_h\subset\mathcal{T}_h$  whenever the local error indicator $\Xi_K$ satisfies 
\[ \sum_{K \in \mathcal{M}_h} \Xi^2_K \geq \zeta \sum_{K\in\mathcal{T}_h} \Xi_K^2,\]
where $\zeta$ is a user-defined bulk density. All  elements in $\mathcal{M}_h$ are marked for refinement and also some neighbours are marked for the sake of closure. For convergence rates we use the alternative form 
\[\text{rate} = -2\log(e/\widehat{e})[\log(\text{DoFs}/\widehat{\text{ DoF}})]^{-1}.\]
 
  Let us consider the non-convex rotated L-shaped domain $\Omega = (-1,1)^2\setminus (-1,0)^2$ and use manufactured displacement and fluid pressure with sharp gradients near the domain re-entrant corner (see, e.g., \cite{carstensen2000locking} for the displacement and \cite{bulle2023hierarchical} for the fluid pressure)
\begin{gather*}
\bu(r,\theta)  =\frac{r^{\chi}}{2\mu} \begin{pmatrix} 
-(\chi + 1) \cos([\chi+1]\theta) + (M_2 - \chi - 1)M_1\cos([\chi-1]\theta)\\
(\chi + 1) \sin([\chi+1]\theta) + (M_2 + \chi - 1)M_1\sin([\chi-1]\theta)
\end{pmatrix}, \\
p(r,\theta)  =  r^{1/3}\sin\biggl(\frac{1}{3}(\frac{\pi}{2} + \theta)\biggr),
\end{gather*}
with polar coordinates $r = \sqrt{x_1^2 + x_2^2}$, $\theta= \arctan(x_2,x_1)$, $\chi \approx 0.54448373$, $M_1 = -\cos([\chi+1]\omega)/\cos([\chi-1]\omega)$, and $M_2 = 2(\lambda + 2\mu)/(\mu + \lambda)$. The boundary conditions (taking as $\Gamma_\rN$ the segments at $x=\pm1$ and $y =\pm 1$ and $\Gamma_\rD$ the remainder of the boundary) and forcing data are constructed from these solutions and the model parameters are  $\lambda = 10^3$, $\mu = 10$, $k_0 = \frac12$, $\mu_f =c_0=k_1= 0.1$, $\alpha = \frac14$, where for this test we consider a Kozeny--Carman permeability form. As in \cite{carstensen2000locking}, a sub-optimal rate of convergence is expected for the mixed elasticity sub-problem in its energy norm. Note that since the exact pressure is in $\rH^{4/3-\epsilon}(\Omega)$  for any
$\epsilon > 0$ (cf. \cite[Chapter 5]{grisvard2011elliptic}), it is still regular enough to have optimal convergence. However its  gradient (and therefore also the exact discharge flux $\bvarphi$) has a singularity located at the reentrant corner and therefore we expect an order of convergence of approximately $O(h^{1/3})$.

\begin{table}[t!]
\setlength{\tabcolsep}{3.5pt}
    \centering
    \begin{tabular}{|rc|ccccccccccc|}
      \hline
DoFs & $h$ & $e(\bsigma)$ & rate & $e(\bu)$  & rate & $e(\brho)$  & rate & $e(\bvarphi)$  & rate & $e(p)$  & rate & $\texttt{eff}(\Xi)$ \\
    \hline 
    \multicolumn{13}{|c|}{uniform mesh refinement} \\
    \hline
       77 & 1.4142 & 1.7e+3 & $\star$ & 1.7e+1 & $\star$ & 7.6e+1 & $\star$ & 9.1e+0 & $\star$ & 1.6e+0 & $\star$ & 2.67 \\
   273 & 0.7071 & 1.5e+3 & 0.27 & 7.8e+0 & 1.10 & 2.5e+1 & 1.61 & 5.8e+0 & 0.65 & 7.8e-01 & 1.06 & 2.65 \\
  1025 & 0.3536 & 1.2e+3 & 0.31 & 5.0e+0 & 0.63 & 2.1e+1 & 0.27 & 4.3e+0 & 0.44 & 3.9e-01 & 1.00 & 3.12 \\
  3969 & 0.1768 & 8.9e+2 & 0.40 & 3.6e+0 & 0.50 & 1.7e+1 & 0.26 & 3.3e+0 & 0.38 & 2.0e-01 & 0.99 & 2.82 \\
 15617 & 0.0884 & 6.6e+2 & 0.43 & 2.6e+0 & 0.45 & 1.4e+1 & 0.32 & 2.6e+0 & 0.35 & 9.9e-02 & 0.99 & 2.60 \\
 61953 & 0.0442 & 4.8e+2 & 0.45 & 1.9e+0 & 0.45 & 1.1e+1 & 0.35 & 2.0e+0 & 0.34 & 5.0e-02 & 1.00 & 2.42 \\
246785 & 0.0221 & 3.5e+2 & 0.46 & 1.4e+0 & 0.46 & 8.4e+0 & 0.38 & 1.6e+0 & 0.34 & 2.5e-02 & 1.00 & 2.27 \\
985089 & 0.0011 & 2.9e+2 & 0.45 & 1.1e+0 & 0.45 & 6.7e+0 & 0.38 & 1.2e+0 & 0.34 & 1.3e-02 & 1.00 & 2.40 \\
      \hline 
    \multicolumn{13}{|c|}{adaptive mesh refinement} \\
    \hline
   77 & 1.4142 & 1.7e+3 & $\star$ & 1.7e+1 & $\star$ & 7.6e+1 & $\star$ & 9.1e+0 & $\star$ & 1.6e+0 & $\star$ & 2.67 \\
   273 & 0.7071 & 1.5e+3 & 0.29 & 7.8e+0 & 1.20 & 2.5e+1 & 1.76 & 5.8e+0 & 0.72 & 7.8e-01 & 1.16 & 2.65\\
  1025 & 0.3536 & 1.2e+3 & 0.33 & 5.0e+0 & 0.66 & 2.1e+1 & 0.28 & 4.3e+0 & 0.47 & 3.9e-01 & 1.04 & 3.12\\
  3813 & 0.3536 & 8.9e+2 & 0.42 & 3.6e+0 & 0.52 & 1.7e+1 & 0.27 & 3.3e+0 & 0.40 & 2.0e-01 & 1.04 & 2.82\\
  7113 & 0.2500 & 6.7e+2 & 0.89 & 2.6e+0 & 0.98 & 1.4e+1 & 0.68 & 2.6e+0 & 0.77 & 1.1e-01 & 1.91 & 2.55\\
 11013 & 0.2500 & 5.2e+2 & 1.20 & 2.0e+0 & 1.31 & 1.1e+1 & 0.99 & 2.0e+0 & 1.07 & 6.0e-02 & 1.74 & 2.58\\
 17449 & 0.2500 & 4.0e+2 & 1.10 & 1.5e+0 & 1.18 & 9.1e+0 & 0.95 & 1.6e+0 & 1.01 & 3.7e-02 & 1.79 & 2.52\\
 27225 & 0.1768 & 3.1e+2 & 1.14 & 1.2e+0 & 1.18 & 7.2e+0 & 1.05 & 1.3e+0 & 1.05 & 2.1e-02 & 1.80 & 2.52\\
 38081 & 0.1768 & 2.5e+2 & 1.37 & 9.0e-01 & 1.53 & 5.7e+0 & 1.38 & 1.0e+0 & 1.37 & 1.5e-02 & 1.57 & 2.55\\
 54797 & 0.1250 & 2.0e+2 & 1.18 & 7.2e-01 & 1.21 & 4.6e+0 & 1.22 & 8.1e-01 & 1.26 & 1.2e-02 & 1.48 & 2.53\\
\hline
   \end{tabular}
    \caption{Example 3. Convergence history (degrees of freedom, mesh size, individual errors,  experimental rates of convergence, and effectivity index) in a rotated L-shaped domain for the formulation using the  lowest-order FE family with PEERS$_k$ elements and changing from uniform (top) to adaptive mesh refinement (bottom) guided by the a posteriori error indicator from \eqref{eq:estimator}.}
    \label{tab:adaptive}
\end{table}

The numerical results of this test are reported in Table~\ref{tab:adaptive}. We observe the expected sub-optimal convergence under an uniform mesh refinement while the optimal convergence in all variables is attained as the mesh is locally refined (the first three rows are very similar as most of the elements are refined in the first three steps. This can be controlled by the bulk density, here taken as  $\zeta = 9.5\cdot10^{-5}$). We also note that the individual errors are approximately of the same magnitude in the last row of each section of the table, but for the adaptive case this is achieved using approximately 5.5\% of the number of degrees of freedom needed in the uniformly refined case. The last column of the table again confirms the reliability and efficiency of the a posteriori error estimator. Note that for this case we compute the divergence part of the error norm in the stress and fluxes as projections of the \revX{equilibrium} and mass residuals onto the displacement and pressure discrete spaces, respectively. We plot in Figure~\ref{fig:adaptive} the approximate displacement and pressure as well as sample triangulations obtained after a few adaptive refinement steps that confirm the expected agglomeration of vertices near the reentrant corner.

\begin{figure}[t!]
    \centering
    \includegraphics[width=0.325\textwidth]{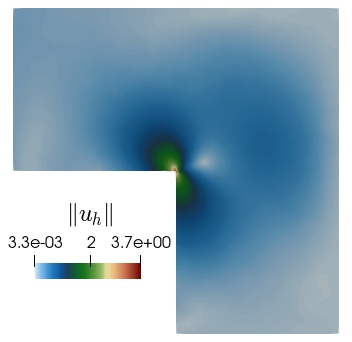}
    \includegraphics[width=0.325\textwidth]{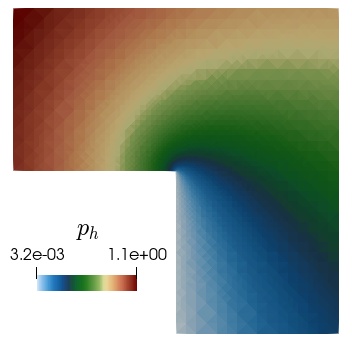}\\
    \includegraphics[width=0.325\textwidth]{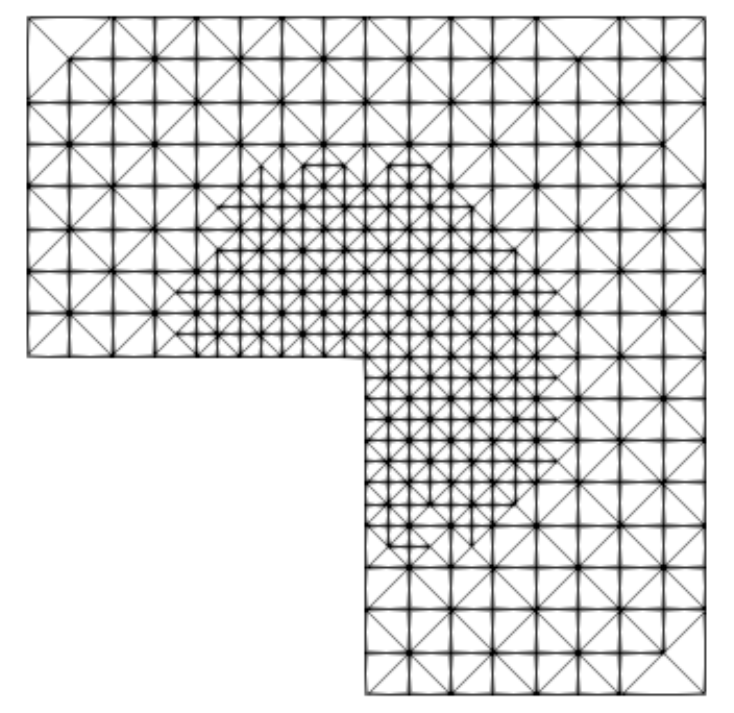}
    \includegraphics[width=0.325\textwidth]{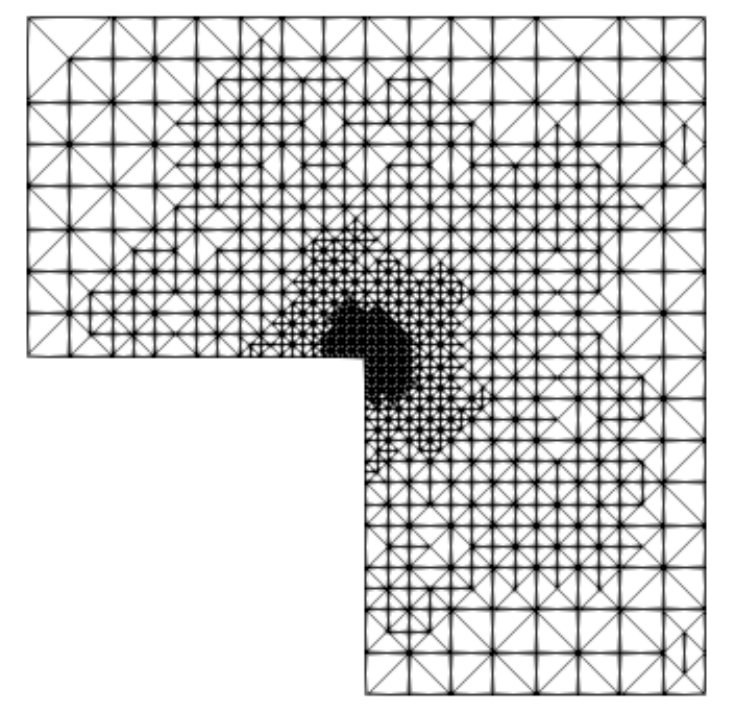}
    \includegraphics[width=0.325\textwidth]{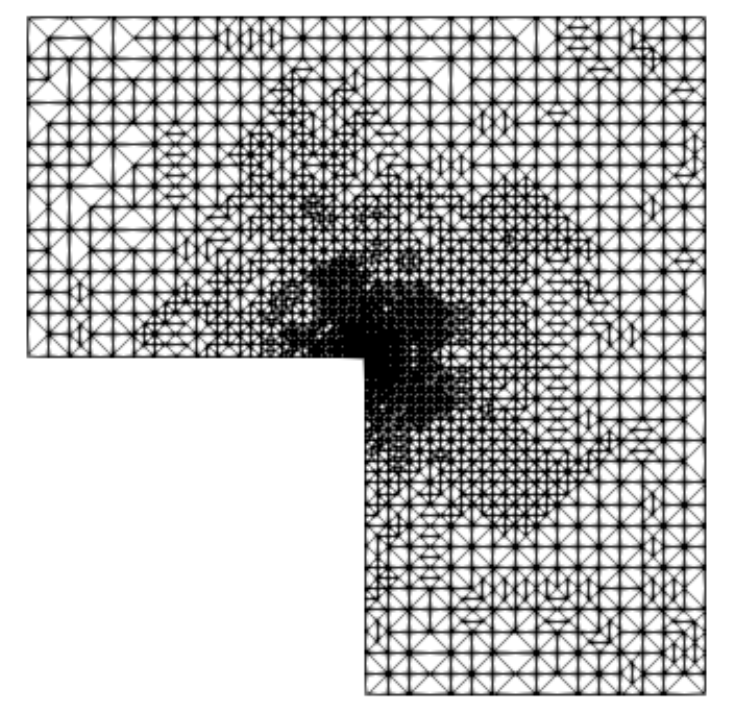}
    \caption{Example 3. Approximate primal variable solutions (solid displacement  and fluid pressure) computed with the first-order scheme,  and  meshes generated after two, three, and four adaptive refinement steps.}
    \label{fig:adaptive}
\end{figure}

\subsection{Adaptive computation of cross-sectional flow and deformation  in a soft tissue specimen} Finally, we apply the proposed methods to  simulate the localisation of stress, deformation, and flow patterns in a multi-layer cross-section of cervical spinal cord. We follow the setup in \cite{meddahi23,stover16}.  The geometry and unstructured mesh have been generated using GMSH \cite{geuzaine09} from the  images in \cite{stover16}. The heterogeneous porous material consists of white and grey matter  surrounded by the pia mater (a thin layer, also considered poroelastic. See Figure~\ref{fig:ex4}, top two left panels). All components are assumed fully saturated with \revX{cerebrospinal} fluid. The transversal cross-section  has 1.3\,cm in maximal diameter and  the indentation region is  a curved subset of the anterior part of the pia mater (a sub-boundary of length {0.4\,cm}). Boundary conditions are of mixed load-traction type, but slightly different than the ones analysed in the previous section. We conduct an indentation test applying a traction $\bsigma\bn = (0,-P)^{-\tt t}$, with $P$ a constant solid pressure of {950\,dyne/cm$^2$}. The posterior part of the pia mater  acts as a rigid posterior support where we prescribe zero displacement. The remainder of the boundary  of the pia mater  is   stress-free. For the fluid phase we impose a constant inflow pressure of cerebrospinal fluid of 1.1\,dyne/cm$^2$ and zero outflow pressure at the stress-free sub-boundary, as well as zero normal discharge flux at the posterior support.  For the three different layers of the domain we use the following values for   Young modulus, Poisson ratio, and lower bound for permeability (some values from \cite{bloom98,meddahi23,stover16}) 
$E^{\mathrm{pia}} = 23'000\,\text{dyne/cm}^{2}, \ \nu^{\mathrm{pia}} = 0.3$, $E^{\mathrm{white}} = 8'400\,\text{dyne/cm}^{2}, \ \nu^{\mathrm{white}} = 0.479,\  E^{\mathrm{grey}} = 16'000\,\text{dyne/cm}^{2}$,  $\nu^{\mathrm{grey}} = 0.49$,  $k_0^{\mathrm{grey}} = 1.4\cdot10^{-9}\,\text{dyne/cm}^{2}$,  $k_0^{\mathrm{white}} = 1.4\cdot10^{-6}\,\text{dyne/cm}^{2}$, $k_0^{\mathrm{pia}} = 3.9\cdot10^{-10}\,\text{dyne/cm}^{2}$. Further, we take $\fb=\boldsymbol{0}$, $g=0$, $\mu_f = 70\,\text{dyne/cm}^{2}\cdot$\,s (for cerebrospinal fluid at 37$^\circ$), $k_1 = \frac12k_0$, $\alpha = \frac14$, and $c_0 = 10^{-3}$. 

The initial and the final adapted mesh, together with samples of  solutions are shown in Figure~\ref{fig:ex4}, where we have used the mesh density parameter $\zeta = 5.5\cdot10^{-4}$. After each adaptation iteration guided by the a posteriori error indicator \eqref{eq:estimator}, a mesh smoothing step was included. The figure indicates that most of the refinement occurs near the interface between the heterogeneous components of the porous media, and the plots also confirm a flow pattern moving slowly from top to bottom, consistently with a typical indentation test.

\begin{figure}[t!]
    \centering
    \includegraphics[width=0.34\linewidth]{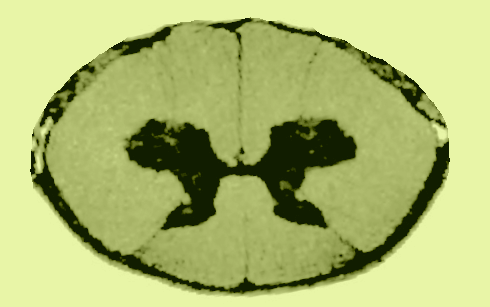} \includegraphics[width=0.32\linewidth]{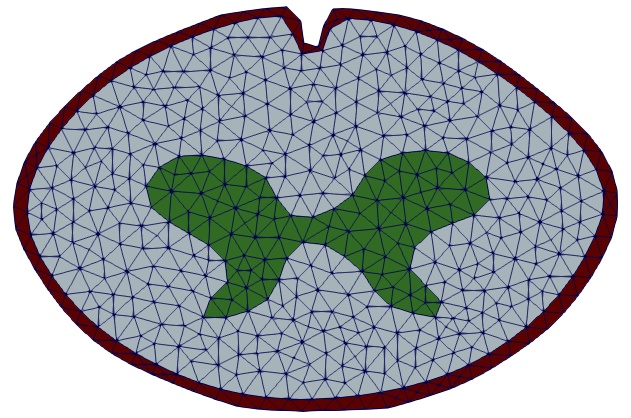}
    \includegraphics[width=0.32\linewidth]{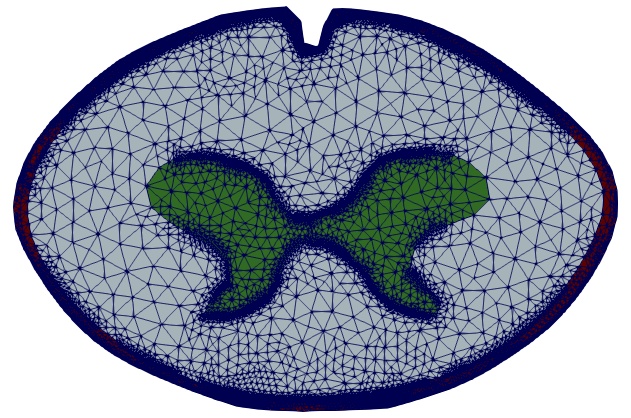}\\
 \includegraphics[width=0.24\textwidth]{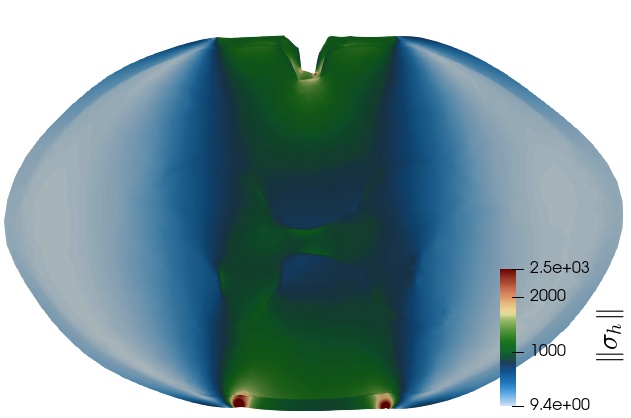}
    \includegraphics[width=0.24\textwidth]{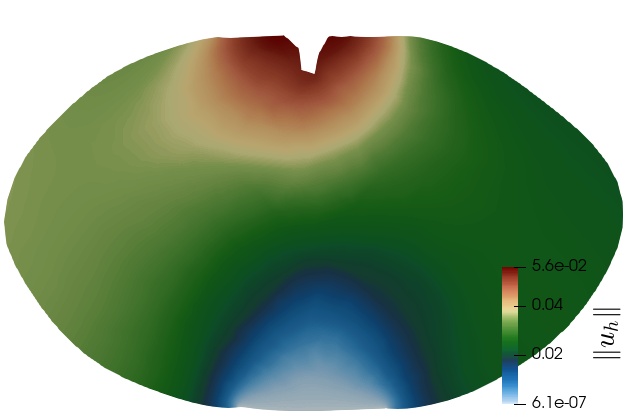}
    \includegraphics[width=0.24\textwidth]{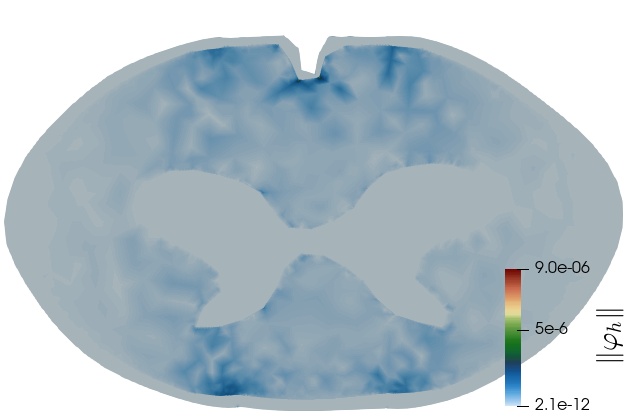}
    \includegraphics[width=0.24\textwidth]{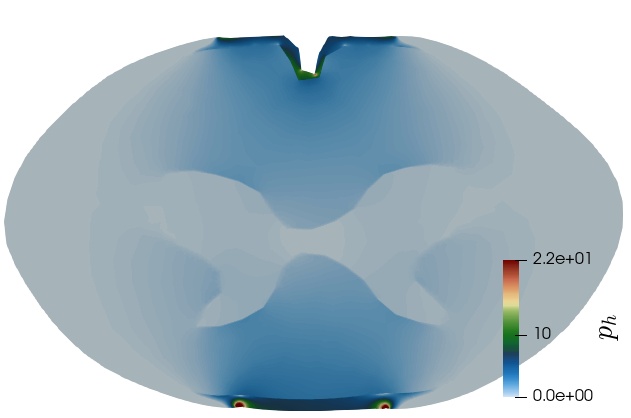}    
    \caption{Example 4. Cross sectional area of cervical spinal cord segmented from sheep imaging data in \cite{stover16}, initial coarse mesh indicating subdomains with distinct material properties (outer pia mater in red, mid white matter in grey, inner grey matter in green), final adapted mesh after six refinement steps, and sample of stress, displacement, 
    fluid flux and fluid pressure at the indentation test (bottom row figures are obtained with the lowest-order scheme and rendered on the deformed configuration).}
    \label{fig:ex4}
\end{figure}

\revX{\section{Concluding remarks}\label{sec:concl}
In this work, we developed a family of mixed finite element methods for a nonlinear poroelasticity model with stress-dependent permeability. By reformulating the constitutive equations, we enabled the use of a Hellinger--Reissner-type mixed formulation, ensuring robustness with respect to near-incompressibility and vanishing storativity limits. The problem structure was analysed using fixed-point theory and saddle-point formulations, leading to well-posedness results for both the continuous and discrete settings. The chosen finite element spaces (PEERS$_k$ elements for elasticity and Raviart--Thomas elements for fluid flow) provided exact equilibrium and mass conservation.  

We derived a priori error estimates, demonstrating optimal convergence properties. Furthermore, a residual-based a posteriori error estimator was introduced and shown to be both reliable and efficient. This estimator guided adaptive mesh refinement strategies, which were validated through numerical experiments in both two and three dimensions. 
}

\revZ{The proposed nonlinear dependence of permeability on stress in poroelasticity is particularly relevant for soft tissues, where permeability changes due to deformation, and for subsurface reservoirs, where stress variations influence fluid transport. The current formulation can be adapted to different material systems by choosing appropriate functional forms for the permeability-stress relationship including also  anisotropic permeability variations. 

We are keen to follow other extensions, for instance exploring different abstract results that would impose a less restrictive assumption on smallness of data, and extend the formulation to accommodate a nonlinear stress-strain constitutive law,   transient effects, as well as fully coupled multiphysics systems, such as thermo-poroelasticity. Note however that these generalisations (useful to broaden the applicability of the proposed methods to more complex real-world scenarios) will require a substantially different theoretical framework. 
}

\section*{Funding}
This work has been supported in part by the Australian Research Council through the \textsc{Future Fellowship} grant FT220100496 and \textsc{Discovery Project} grant DP22010316; by the National Research and Development Agency (ANID) of the Ministry of Science, Technology, Knowledge and Innovation of Chile through the postdoctoral program \textsc{Becas Chile} grant 74220026.

\bibliographystyle{siam}
\bibliography{klrv}

\end{document}